%% file: main.tex
\documentclass[reqno, 11pt, american]{amsart}
\title[Quantum Limits of perturbations of $\Lap$ by $\delta_{\gamma}$ on $\mathbb{S}^{2}$]{Quantum Limits of the Laplacian perturbed along a geodesic on $\mathbb{S}^{2}$}

\author[S. Verdasco]{Santiago Verdasco}
\address{M$^2$ASAI. Universidad Politécnica de Madrid, ETSI Navales, Avda. de la Memoria, 4, 28040, Madrid, Spain.}
\email{santiago.verdasco@upm.es}

\input{Repository/packages}

\input{Repository/commands}

\def \sqLap{\sqrt{\Lap}}

\NewDocumentCommand{\sympf}{O{} m m O{}}{{#1\{ #2 \, , #3 #1\}}_{#4}}

\DeclareMathOperator{\Speceven}{Spec_{even}}
\DeclareMathOperator{\Specodd}{Spec_{odd}}

\begin{document}

\begin{abstract}
This article studies the high-frequency behavior of eigenstates of perturbations of the Laplace-Beltrami operator on the two-sphere $\S[2]$ by a measure supported on an equator. We are interested in understanding to what extent this behavior can be described in terms of the geodesic flow of the sphere. This is done by analyzing quantum limits and semiclassical measures of sequences of high-frequency eigenfunctions, which  describe how their $L^2$-masses concentrate in phase space. When the Laplacian on $\S[2]$ is perturbed by a bounded potential, it is known that the family of all possible semiclassical measures is contained in the set of positive measures on the unit cosphere bundle $S^*\S[2]$ that are invariant under geodesic flow (with equality in the unperturbed case). In this article, we show that the presence of a singular delta potential on a closed geodesic results in the existence of sequences of eigenfunctions whose semiclassical measure is not invariant under geodesic flow. In particular, one can find a sequence of eigenfunctions whose energy asymptotically concentrates on the hemisphere bounded by the equator on which the potential is concentrated.
\end{abstract}

\maketitle

\section{Introduction}
\label{Sec:Introduction}

Understanding the properties of eigenfunctions of a self-adjoint Schrödinger operator has been a topic of intensive research from different areas of mathematics (Analysis, Differential Geometry, Number Theory, and Representation theory, among others) over the last sixty years. They arise originally as the basic building blocks for analyzing the dynamics of classical and quantum waves, heat propagation, etc.; see the monograph \cite{Zelditch2017} or the survey \cite{Zelditch2008} for a detailed introduction to the field.

This article focuses on the study of \textit{high-frequency} eigenfunctions (that is, eigenfunctions corresponding to large eigenvalues) of Schrödinger operators on a compact Riemannian manifold $(M, g)$ without boundary. These are defined as follows: let $\Lap$ be the positive Laplace-Beltrami operator on $(M,g)$, and for a real-valued $V \in L^\infty(M)$ form the operator $\Lap {}+ V$. Eigenfunctions of $\Lap {}+ V$ are solutions $u \in L^2(M)$, $u \neq 0$, to the equation
\begin{equation} \label{Eq:SchroOp-EF-equation}
    [\Lap {}+ V] u = \eta \, u \qquad \eta \in \R \ .
\end{equation}
Since $M$ is compact, \eqref{Eq:SchroOp-EF-equation} has a non-zero solution only for an increasing sequence of real numbers $(\eta_{n})_{n \in \N}$, $\eta_n \nearrow + \infty$, called eigenvalues; we write $\Spec(\Lap {}+ V) = \{ \eta_{n} \}_{n \in \N}$, this is the spectrum of $\Lap {}+ V$.

Eigenfunctions encode local and global geometric properties of $(M, g)$: the first eigenfunctions provide information about the local geometry, they depend, among other quantities, on the curvatures; high-frequency eigenfunctions, on the other hand, are related to the global geometry and the long-time dynamics of the geodesic flow. In this article, we are interested in the latter regime; in particular, we are interested in understanding the geometry of those regions on $M$ where sequences of high-frequency eigenfunctions may concentrate. 

A well-established approach to this problem consists in considering a sequence of probability  measures $(\abs{u_n}^2 \D{x})_{n \in \N}$ on $M$ where, for every $n \in \N$, $u_n$ satisfies 
\begin{equation} \label{Eq:HE-EF-equation}
    (\Lap {}+ V) u_{n} = \lambda_{n}^2 u_{n} \ , \qquad \norm{u_n}[L^2(M)] = 1 \ , \qquad \lim_{n \to \infty} \lambda_n^2 \to \infty \ .
\end{equation}
Observe that the sequence of measures $(\abs{u_n}^2 \D{x})_{n \in \N}$ is bounded in $(\Cont(M))^*$, thus, thanks to the Banach-Alaoglu theorem, there exist a positive measure $\nu$ and a subsequence $(\abs{u_{n_j}}^2 \D{x})_{j \in \N}$ such that
\[
\lim_{j \to \infty} \int_{M} a(x) \abs{u_{n_j}(x)}^2 \D{x} = \int_{M} a(x) \nu(\D{x}) \qquad \forall \, a \in \Cont(M) \ .
\]
Measures $\nu$ arising this way are called \emph{quantum limits}. It is known that the structure of quantum limits is related to the dynamics of the geodesic flow of $(M,g)$ which is naturally defined as a Hamiltonian flow on the phase space $T^*M$, the cotangent bundle of $M$. It is therefore natural to consider lifts of the probability measures $\abs{u_{n_j}(x)}^2 \D{x}$ to phase space, and analyze their interaction with the geodesic flow. One way to do this is to considering (semiclassical) \textit{Wigner distributions}, which are defined for $u \in L^2(M)$ and $h > 0$ by
\begin{equation} \label{Eq:def-Wigner_dist}
    W_{h}[u] (a) \coloneqq \ip{u}{\Op[h]{a} u}[L^2(M)] \ , \qquad a \in \CinfK(T^*M) \ ,
\end{equation}
where $\Op[h]{a}$ is the semiclassical pseudodifferential operator with symbol $a$ obtained by the semiclassical Weyl Quantization; see \cite[Chapter 14]{Zworski2012} for a definition of the operators $\Op[h]{a}$. 

If one chooses a sequence of \emph{semiclassical parameters} $h_n\to0^+$ and replaces $(\abs{u_n}^2 \D{x})_{n \in \N}$ with the sequence of Wigner distributions $(W_{h_n}[u_n])_{n \in \N}$, a weak compactness argument implies the existence of a subsequence $(W_{h_{n_k}}[u_{n_{k}}])_{k \in \N}$ and a positive measure $\mu$ on $T^*M$ such that
\[
\lim_{k \to \infty} \ip{ u_{n_k} }{\Op[h_{n_k}]{a} u_{n_k}}[L^2(M)] = \int_{T^*M} a(x, \xi) \, \mu(\D{x}, \D{\xi}) \qquad \forall \, a \in \CinfK(T^*M) \ .
\]
These measures $\mu$ are called \emph{semiclassical measures}; they simultaneously detect concentration effects \emph{and} oscillations of order $h_n^{-1}$ developed by the sequence $(u_n)_{n \in \N}$. When studying eigenfunctions of a second order differential operator, such as $\Lap {}+ V$, one usually makes the choice $h_n \coloneqq \lambda_{n}^{-1}$,\footnote{In fact, any other sequence $(h_n)_{n \in \N}$ such that $\lim_{n \to \infty} h_n\lambda_{n} = 1$ will produce the same measure $\mu$.} and equation \eqref{Eq:HE-EF-equation} can be rewritten in semiclassical form:
\begin{equation} \label{Eq:EF-equation-semiclassicalPOV}
    [h_n^2 \Lap {}+ h_n^2 V] u_n = u_n \ , \qquad \norm{u_n}[L^2(M)] = 1 \ , \qquad \lim_{n \to \infty} h_n = 0 \ .
\end{equation}
With this choice of $(h_n)_{n\in\N}$, semiclassical measures are always probability measures on $T^*M$. 
If a sequence $(u_n)_{n \in \N}$ satisfying \eqref{Eq:EF-equation-semiclassicalPOV} has a unique quantum limit $\nu$ and a unique semiclassical measure $\mu$ then
\begin{equation} \label{Eq:SDM-QL-relation}
    \int_{T^*M} a(x) \mu(\D{x}, \D{\xi}) = \int_{M} a(x) \nu(\D{x}) \qquad \forall \, a \in \Cont(M) \ .
\end{equation}

Let us denote by $\mathcal{M}_{\mathrm{sc}}(\Lap {}+ V)$ the family of all possible semiclassical measures $\mu$ arising from a sequence of eigenfunctions $(u_n)_{n \in \N}$ as in \eqref{Eq:EF-equation-semiclassicalPOV}. Using the symbolic calculus for pseudodifferential operators, one can show that every $\mu \in \mathcal{M}_{\mathrm{sc}}(\Lap {}+ V)$ belongs to $\mathcal{P}_{\mathrm{inv}}(S^*M)$, the family of all semiclassical measures that enjoy the following two properties:
\begin{enumerate}
    \item $\mu$ is a probability measure, and $\supp \mu \subseteq S^*M$,
    \item $\mu$ is invariant under geodesic flow $\phi_t$ on $T^*M$, that is, $(\phi_t)_{*} \mu = \mu$ for all $t \in \R$.
\end{enumerate}
The problem of characterizing the set $\mathcal{M}_{\mathrm{sc}}(\Lap {}+ V)$ for a given geometry is a very difficult problem, even for $V = 0$; it has been completely solved in very few cases and is the subject of an important open problem, the \textit{Quantum Unique Ergodicity} conjecture by Rudnick and Sarnak \cite{RudnickSarnak1994}.

In compact rank-one symmetric spaces, a setting with a large number of symmetries and periodic geodesic flow, it was proved that $\mathcal{M}_{\mathrm{sc}}(\Lap) = \mathcal{P}_{\mathrm{inv}}(S^*M)$ \cite{JakobsonZelditch1999, AzagraMacia2010, Macia2008}. However, this is a highly unusual situation: many of the known results show that under certain conditions in the geometry or potential, semiclassical measures enjoy additional properties that force $\mathcal{M}_{\mathrm{sc}}(\Lap) \subsetneq \mathcal{P}_{\mathrm{inv}}(S^*M)$. For example, if $V \neq 0$ and either $(M,g)$ is a Zoll manifold \cite{Macia2008, MaciaRiviere2016, MaciaRiviere2019}, or $(M,g)$ has completely integrable geodesic flow \cite{AnantharamanFer-KammererMacia2015, AnantharamanMacia2014, AnantharamanLeautaudMacia2016, MaciaRiviere2018}, it has been shown that any $\mu \in \mathcal{M}_{\mathrm{sc}}(\Lap {}+ V)$ is invariant under an additional Hamiltonian flow on $S^*M$ derived from $V$.

Meanwhile, if $V = 0$ and $(M, g)$ has negative sectional curvature, the results of \cite{Anantharaman2008, AnantharamanNonnenmacher2007Anosovman, Riviere2010} imply that every $\mu \in \mathcal{M}_{\mathrm{sc}}(\Lap)$ has positive entropy, which prevents $\mu$ from being supported inside a closed geodesic. These results are partial advances towards the Quantum Unique Ergodicity Conjecture \cite{RudnickSarnak1994} which states that on a compact closed surface with constant negative curvature, $\mathcal{M}_{\mathrm{sc}}(\Lap) = \{\Liouv\}$, where $\Liouv$ stands for the Liouville measure on $S^*M$ induced by the symplectic structure on $T^*M$. This conjecture was based on the works \cite{Snirelman1974, ColindeVerdiere1985, Zelditch1987}, whose combined efforts led to what is currently known as the Quantum Ergodicity Theorem: given a closed Riemannian manifold $(M,g)$ with ergodic geodesic flow and an orthonormal basis of eigenfunctions of $\Lap$, there exists a density-one subsequence whose single semiclassical measure is $\Liouv$.

In \cite{Seba1990}, S{\v e}ba introduced a different model he conjectured to possess quantum chaotic features. This model is very different from surfaces of negative curvature, as it is obtained by perturbing any Laplacian $\Lap$ by Dirac delta potentials, $\delta_{q}$. What is now known as the S{\v e}ba billiard is obtained as the limiting dynamics of the billiard flow in a rectangle minus a disk of radius $r > 0$ (a Sinai billiard) as $r$ tends to zero.  
The quantizations of the S{\v e}ba billiard can be realized as a one-parameter family of operators, which can be written formally as
\[
\Lap {}+ \alpha \ket{\delta_{q}} \bra{\delta_{q} }
\]
for some constant $\alpha \in \R$ that represents the intensity of the perturbation. Here $\ket{\delta_{q}} \bra{\delta_{q} }$ is short for the finite rank-one projector
\begin{equation} \label{Eq:def-deltaq-projector}
    \ket{\delta_{q}} \bra{\delta_{q}} \colon \Cinf(M) \to \Dist(M) \ , \qquad \ket{\delta_{q}} \bra{\delta_{q}} u = u(q) \, \delta_{q} \ .
\end{equation}
This projector can be extended continuously to a map from $H^2(M)$ to $H^{-2}(M)$, only when $\dim(M) \leq 3$, which is why these perturbations are considered for this range of dimensions.

The S{\v e}ba billiard motivated the study of point perturbations on tori. The eigenvalue statistics of these perturbed operators have drawn much attention from the mathematical physics community; see \cite{BogomolnyGerlandSchmit2001, FreibergKurlbergRosenzweig2017, Hillairet2002, RahavFishman2002, RudnickUeberschaer2014, Ueberschaer2012} and the references therein. Understanding the properties of the set of semiclassical measures for eigenstates has been a problem of interest in the last 20 years. For example, a Quantum Ergodicity theorem was proved for one point perturbation in the square torus in dimension $2$ \cite{KurlbergUeberschaer2014} and in dimension $3$ \cite{Yesha2015}. However, concentration in the momentum variable for irrational tori has been observed in dimensions $2$ and $3$ \cite{KurlbergUeberschaer2017, KurlbergRosenzweig2017, KurlbergLesterRosenzweig2023}. These results depend on fine spectral estimates coming from Analytic Number Theory, exploiting the lattice structure underlying the spectrum of $\Lap$ in tori.

In general, point perturbations of $\Lap$ on a finite set $Q$ may be considered; these are formally represented as
\begin{equation} \label{Eq:formal-Q-point-pert}
    \Lap {}+ \sum_{p, q \in Q} a_{pq} \ket{\delta_{q}} \bra{\delta_{p}}
\end{equation}
for a Hermitian matrix $\mathbb{A} = (a_{pq})_{p, q \in Q}$. The non-diagonal entries of the matrix $\mathbb{A}$ represent non-local interactions between the points of $Q$. In the semiclassical regime, it makes sense to make the perturbation parameter $\mathbb{A}$ depend on $h_n$; see \cite{Ueberschaer2014, Shigehara1993} for a physical heuristic supporting this claim. In this regime, in the spheres $\S[2]$ and $\S[3]$, the author proved the existence of sequences of eigenfunctions of point-perturbations of $\Lap$ on $Q = \{-q, q\}$ whose semiclassical measure is not invariant under geodesic flow \cite{Verdasco2026spheres}. This phenomenon is specific to settings where the subset $Q$ can "trap" part of the geodesic flow. More precisely, as soon as the set $Q$ is \emph{non-self-focal}, that is, for every $q \in Q$ the set of unit-speed co-vectors for which the geodesic flow returns to $Q$ in finite time has zero measure on $S_{q}^*M$, then every semiclassical measure is invariant under the geodesic flow \cite{Verdasco2026}.

In this article, we study singular perturbations of $\Lap$ on $\S[2]$ by the measure $\delta_{\gamma}$, where $\gamma$ is a closed geodesic on $\S[2]$ and
\begin{equation}
    \delta_{\gamma}(a) \coloneqq \int_{0}^{2\pi} a(\gamma(s)) \frac{\D{s}}{2\pi} \ , \qquad  a \in \Cont(\S[2]) \ .
\end{equation}
These are operators that formally look like (see Section \ref{Sec:Singular_perturbation_of_Laplacian} for a precise definition)
\begin{equation} \label{Eq:formal-pert-along-gamma}
    \Lap {}+ \alpha \ket{\delta_{\gamma}} \bra{\delta_{\gamma}} \ , \qquad \alpha \in \R \ ,
\end{equation}
where $\ket{\delta_{\gamma}} \bra{\delta_{\gamma}}$ is defined analogously to \eqref{Eq:def-deltaq-projector}. In particular, we are interested in the concentration and high-oscillation properties of sequences of high-energy eigenfunctions of operators of the form $\Lap {}+ \alpha \ket{\delta_{\gamma}} \bra{\delta_{\gamma}}$.

Given a sequence of singular perturbations of $\Lap$ on $\S[2]$ by $\delta_{\gamma}$, $(\Lap {}+ \alpha_n \ket{\delta_{\gamma}} \bra{\delta_{\gamma}})_{n \in \N}$, for every $n \in \N$ let $u_n \in L^2(\S[2])$ and $h_n > 0$ be such that
\begin{equation} \label{Eq:semiclassical-EFeq-Lapgamma}
    h_n^2\big[ \Lap {}+ \alpha_n \ket{\delta_{\gamma}} \bra{\delta_{\gamma}} \big] u_n = u_n \ , \qquad \norm{u_n}[L^2(\S[2])] = 1 \ , \qquad \lim_{n \to \infty} h_n = 0 \ .
\end{equation}
Let us denote
\[
\mathcal{M}_{\mathrm{sc}} \Big( \big(\Lap {}+ \alpha_n \ket{\delta_{\gamma}} \bra{\delta_{\gamma}} \big)_{n \in \N} \Big)
\]
as the set of all possible weak-$\star$ accumulation points of the sequence of Wigner distributions $(W_{h_n}[u_n])_{n \in \N}$ where $u_n$ and $h_n$ satisfy \eqref{Eq:semiclassical-EFeq-Lapgamma}. The main results in this article state that $\mathcal{M}_{\mathrm{sc}} ( (\Lap {}+ \alpha_n \ket{\delta_{\gamma}} \bra{\delta_{\gamma}} )_{n \in \N} )$ always contains the set of invariant probability measures supported on $S^*M$ and that, in some situations, the inclusion is strict.

\begin{Theorem} \label{Thm:maintheorem-invariantSDM}
    Let $\gamma$ be a closed geodesic in $\S[2]$, and fix a sequence $(\Lap {}+ \alpha_n \ket{\delta_{\gamma}} \bra{\delta_{\gamma}})_{n \in \N}$ of singular perturbations of $\Lap$ by the distribution $\delta_{\gamma}$. Then
    \begin{equation} \label{Eq:contenido-sets-invariance}
    \mathcal{P}_{\mathrm{inv}}(S^*\S[2]) \subseteq \mathcal{M}_{\mathrm{sc}} \Big( \big( \Lap {}+ \alpha_n \ket{\delta_{\gamma}} \bra{\delta_{\gamma}} \big)_{n \in \N} \Big) \ .
    \end{equation}
    Moreover, there exist sequences $(\Lap {}+ \alpha_n \ket{\delta_{\gamma}} \bra{\delta_{\gamma}})_{n \in \N}$ for which
    \begin{equation} \label{Eq:equality-sets-invariance}
        \mathcal{P}_{\mathrm{inv}}(S^*\S[2]) = \mathcal{M}_{\mathrm{sc}} \Big( \big(\Lap {}+ \alpha_n \ket{\delta_{\gamma}} \bra{\delta_{\gamma}} \big)_{n \in \N} \Big) \ .
    \end{equation}
\end{Theorem}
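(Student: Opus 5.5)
The proof will rest on the spherical-harmonic structure of $\Lap$ on $\S[2]$. Eigenspaces are $\mathcal{H}_k$ (degree-$k$ harmonics), with eigenvalue $k(k+1)$ and dimension $2k+1$. Rotate so that $\gamma$ is the equator $\{\theta=\pi/2\}$. The rank-one perturbation $\alpha\ket{\delta_\gamma}\bra{\delta_\gamma}$ only sees the component of $u$ along $\delta_\gamma$. Concretely, $\delta_\gamma(Y)=\frac{1}{2\pi}\int_0^{2\pi}Y(\pi/2,\phi)\,\D{\phi}$ vanishes for every $Y\in\mathcal{H}_k$ except the zonal harmonic $Y_k^0$ (axis orthogonal to $\gamma$), and it also vanishes on $Y_k^0$ when $k$ is odd, since $P_k(0)=0$. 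Therefore
\[
\mathcal{H}_k^\perp\coloneqq\{Y\in\mathcal{H}_k : \ip{Y}{Y_k^0}=0\}\quad(k \text{ even}), \qquad \mathcal{H}_k \quad (k\text{ odd}),
\]
consist of eigenfunctions of $\Lap{}+\alpha\ket{\delta_\gamma}\bra{\delta_\gamma}$ with eigenvalue $k(k+1)$, for every $\alpha\in\R$. This is exactly as in the point-perturbation case \cite{Verdasco2026spheres}, using the self-adjoint extension defined in Section~\ref{Sec:Singular_perturbation_of_Laplacian}.

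For \eqref{Eq:contenido-sets-invariance}, I would first invoke the known result $\mathcal{M}_{\mathrm{sc}}(\Lap)=\mathcal{P}_{\mathrm{inv}}(S^*\S[2])$ \cite{JakobsonZelditch1999, AzagraMacia2010, Macia2008}. Its proof in fact produces, for each $\mu\in\mathcal{P}_{\mathrm{inv}}$, a sequence $v_k\in\mathcal{H}_{k}$, $k$ along some subsequence, with $W_{1/k}[v_k]\to\mu$. The task is to replace $v_k$ by $w_k\coloneqq$ the normalized projection of $v_k$ onto $\mathcal{H}_k^\perp$, with $h_k$ given by $h_k^{-2}=k(k+1)$, and show that the Wigner limit is unchanged. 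This reduces to showing that the $L^2$-mass of the single direction $Y_k^0$ cannot carry a positive proportion of a non-trivial limit. If $\abs{\ip{v_k}{Y_k^0}}\to0$, the claim is immediate by $L^2$-continuity of $W_h$. Otherwise, I would use the freedom in the construction. Invariant measures are weak-$\star$ limits of convex combinations of measures on closed geodesics, and these are realized by sums of Gaussian beams or coherent states on $\mathcal{H}_k$. Rotating each beam by an angle chosen generically, or averaging over a small family of rotations, makes the overlap with the fixed vector $Y_k^0$ tend to zero. A Cauchy–Schwarz/dimension argument suffices for this: averaging $\abs{\ip{R v_k}{Y_k^0}}^2$ over $R\in SO(3)$ gives $1/(2k+1)$, so for each $k$ some rotation close to the identity has a small overlap. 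Continuity of $\mu\mapsto(R)_*\mu$ then lets us pass to the limit. This step, controlling the overlap with $Y_k^0$ uniformly while keeping the limit measure exactly $\mu$, is the main technical obstacle. The cleanest route is to take $R_k\to\mathrm{Id}$ with overlap at most $O(k^{-1/2})$ on average, which preserves the limit.

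For \eqref{Eq:equality-sets-invariance}, I would choose $\alpha_n$ so that the remaining eigenfunctions, those not in $\bigoplus_k\mathcal{H}_k^\perp$, also have invariant semiclassical measures. The remaining eigenvalues $\lambda^2$ solve the secular equation
\[
\frac1{\alpha}+\sum_{k\,\mathrm{even}}\frac{\abs{Y_k^0(\gamma)}^2}{k(k+1)-\lambda^2}\text{ (suitably renormalized)}=0,
\]
and their eigenfunctions are $G_\lambda=\sum_k \frac{Y_k^0(\gamma)}{k(k+1)-\lambda^2}Y_k^0$ with $\abs{Y_k^0(\gamma)}^2\sim \frac{2k+1}{4\pi}P_k(0)^2\sim c$. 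I would take $\alpha_n$ such that, along the corresponding energies, $\lambda_n^2$ stays at distance $\gtrsim$ const from the nearest $k(k+1)$, for instance $\alpha_n\to0$ fast enough, or any fixed $\alpha$ where the shifts are bounded away from resonance. Then the coefficients decay like $1/\abs{k-k_0}$ around the nearest $k_0$. The mass of $G_\lambda$ thus spreads over a window of spherical-harmonic degrees $k$ with $\abs{k-k_0}\ll h^{-1}$, and each component is zonal. The semiclassical measure is then that of zonal harmonics: a mixture of the limits of $Y_{k}^0$, which is the invariant measure on geodesics through the poles, normalized. Cross terms between different $k$ contribute an $h(k-k')$-commutator which is $o(1)$ after tail truncation. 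This yields invariance, and the Wigner limits of all eigenfunctions lie in $\mathcal{P}_{\mathrm{inv}}$, giving equality together with \eqref{Eq:contenido-sets-invariance}.
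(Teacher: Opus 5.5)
Your proof of the inclusion $\mathcal{P}_{\mathrm{inv}}(S^*\mathbb{S}^2)\subseteq\mathcal{M}_{\mathrm{sc}}$ is sound, but it takes a different route from the paper. The paper imports Theorem~1.3 of \cite{Verdasco2026spheres}: every invariant measure is produced by eigenfunctions of $\Lap$ vanishing at the pole $q$. It then uses $\delta_\gamma u=P_\ell(0)\,u(q)$ from Lemma~\ref{Lemma:relation_Yellgamma-Zellq}. Since $Y_k^\gamma$ is a multiple of $Z_k^q$, your rotation-and-projection argument is in effect a self-contained proof of the special case of that theorem that is needed. It uses only $\mathcal{M}_{\mathrm{sc}}(\Lap)=\mathcal{P}_{\mathrm{inv}}(S^*\mathbb{S}^2)$ and Schur orthogonality. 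To get $R_k\to\mathrm{Id}$, bound the average over a ball of radius $\epsilon_k$ in $SO(3)$ by the Haar average $1/(2k+1)$ divided by the ball's volume $\sim\epsilon_k^3$, and let $\epsilon_k\to0$ slowly.

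The genuine gap is in the equality part. You claim that cross terms $\ip{z_k^q}{\Op[h]{a}z_{k'}^q}$ with $k\neq k'$ and $\abs{k-k'}$ bounded are $o(1)$, and this is false. The commutator identity only gives $(k-k')\ip{z_k^q}{\Op[h]{a}z_{k'}^q}=\pm\tfrac{i}{2}\ip{z_k^q}{\Op[h]{\{\abs{\xi}^2,a\}}z_{k'}^q}+O(h)$. So cross terms vanish only when tested against flow-invariant symbols, and using that to deduce invariance is circular. In fact, with the ladder operators used in the proof of Theorem~\ref{Thm:SDM-newEF_is_muq}, $z_{m+j}^q$ equals $\Op[h]{(\kappa^q+i\varsigma^q)^j}z_m^q$ up to sign and $o(1)$ in $L^2$. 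Hence the cross term converges to $\int a\,(\kappa^q+i\varsigma^q)^j\,\mu_q$, the $\mu_q$-average of $a(\phi_t(q,\xi))e^{ijt}$, which is generally nonzero. This is exactly the mechanism of Theorem~\ref{Thm:end-theorem-non-invariance}: $\varphi z_{2\ell_n+1}^q+\psi g_{h_n}^\gamma$ is a combination of zonal harmonics of nearby degrees, and its limit is not invariant. Concretely, for $\theta=\frac{\pi}{2}$ every $\lambda_{2\ell+1}^2$ lies in $\Lambda_{\pi/2}$. Then $P_{2\ell+1}(\cos d(x,q))$, restricted to the hemisphere containing $q$ and extended by zero, is an eigenfunction, and it clearly cannot have an invariant limit.

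Consequently, when the coefficients of $G_\lambda$ really are spread over a window of degrees ($\sigma_n\to\sigma\in(0,1)$ in the paper's notation), the limit is not a mixture of zonal limits. It is the weak limit of $\abs{\Gamma^{\sigma,\Upsilon}}^2\mu_q$ as $\Upsilon\to\infty$. It equals $\mu_q$ only because $\frac{1}{C_\sigma}\sum_{k\in\mathbb{Z}}\frac{(-1)^k}{k-\sigma}e^{2ikt}$ has modulus one almost everywhere (Lemma~\ref{Lemma:zetasigma2-Fourierexp}). Nothing in your proposal plays this role.

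Moreover, the regime you actually choose does not behave as you describe. Take $\theta$ fixed away from $0$ and $\frac{\pi}{2}$ (fixed $\alpha\neq0$), or $\alpha_n\to0$. One checks that the left-hand side of \eqref{Eq:equation_eta-newEV}, evaluated at $\eta=h^{-2}$, is $O(h)$ uniformly for $\sigma$ in compact subsets of $(0,1)$. Meanwhile $\frac{1}{2\pi}\cotan\theta_n$ stays away from $0$. Hence $\sigma_n\to0$ or $1$, and by Theorem~\ref{Thm:quasimodes-Ghgamma}(2), $g_{h_n}^\gamma$ is $L^2$-close to a single $\pm z_{2(\ell_n+\sigma)}^q$. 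So there is no spreading and no cross term to control.

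Proving this concentration would repair your second part for such sequences. You would also need to ensure that no new eigenvalue equals an odd $\lambda_{2\ell+1}^2$ at high energy, which is the hypothesis $\Lambda_{\theta_n}\cap\Specodd(\Lap)=\varnothing$ of Theorem~\ref{Thm:equal-set_SDM}. Otherwise the eigenspaces contain exactly the mixed states above. As written, however, your argument posits spreading and then rests on the false cross-term claim.
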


\begin{Remark}
    In fact, sequences $(\Lap {}+ \alpha_n \ket{\delta_{\gamma}} \bra{\delta_{\gamma}})_{n \in \N}$ such that \eqref{Eq:equality-sets-invariance} holds are abundant as described in more detail in Remark \ref{Rmk:genericity_condition-newEV_cap_oldEV=empty}, after Theorem \ref{Thm:equal-set_SDM}.
\end{Remark}

\begin{Theorem} \label{Thm:maintheorem-QuantumLimit}
    Let $\gamma$ be a closed geodesic in $\S[2]$ and let $\Omega_{+}$ and $\Omega_{-}$ be the two connected components of $\S[2] \setminus \gamma$. Consider the probability measure on $\S[2]$,
    \[
    \nu_\gamma (\D{x}) \coloneqq \frac{2}{\pi} \frac{1}{\cos(d(x, \gamma))} \frac{\D{x}}{\vol(\S[2])} \ .
    \]
    There exists a sequence $(\Lap {}+ \alpha_n \ket{\delta_{\gamma}} \bra{\delta_{\gamma}})_{n \in \N}$ such that for every $m \in [0,2]$ there exist $(u_n)_{n \in \N} \subseteq L^2(\S[2])$ and $(h_n)_{n \in \N} \subseteq (0,1)$ satisfying \eqref{Eq:semiclassical-EFeq-Lapgamma} such that for all $b \in \Cont(\S[2])$,
    \begin{equation} \label{Eq:singularQL}
        \lim_{n \to \infty} \int_{\S[2]} b(x) \abs{u_n(x)}^2 \D{x} = \int_{\S[2]} b(x) \, \big[ m \charf_{\Omega_{+}}(x) + (2-m) \charf_{\Omega_{-}} (x) \big]\nu_{\gamma}(\D{x}) \ .
    \end{equation}
    For $m \neq 1$, these quantum limits cannot be the projection onto the base manifold $\S[2]$ of a measure on $S^*\S[2]$ that is invariant under geodesic flow.
\end{Theorem}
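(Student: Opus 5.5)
The plan is to take $\gamma$ to be the equator $\{\theta=\pi/2\}$ in spherical coordinates $(\theta,\varphi)$, so that $d(x,\gamma)=\abs{\pi/2-\theta}$ and $\cos(d(x,\gamma))=\sin\theta$. The first step is to exploit the rotational symmetry. Since $\delta_\gamma$ is the normalized arclength measure on $\gamma$, the rank-one perturbation $\ket{\delta_\gamma}\bra{\delta_\gamma}$ only sees the $\varphi$-independent (zonal) part of a function. Among zonal harmonics it only sees those with $Y_l^0|_\gamma\neq0$, namely $l$ even, since $P_l(0)=0$ for $l$ odd. Consequently, for every $\alpha$ the odd zonal harmonics $Y_l^0$, $l$ odd, remain eigenfunctions of $\Lap+\alpha\ket{\delta_\gamma}\bra{\delta_\gamma}$ with eigenvalue $l(l+1)$. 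The genuinely new eigenfunctions are given by the Krein-type formula from Section \ref{Sec:Singular_perturbation_of_Laplacian}: they are Green's functions $G_\lambda=\sum_{k \text{ even}} \overline{Y_k^0(\gamma)}\,Y_k^0/(k(k+1)-\lambda)$ (suitably renormalized), and $\lambda$ is a new eigenvalue exactly when a spectral function $F(\lambda)$ equals $1/\alpha$.

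The second step is to force a degeneracy. Take a sequence of odd integers $l_n\to\infty$ and choose $\alpha_n$ with $1/\alpha_n=F(l_n(l_n+1))$. This value is finite, because the sum defining $F$ runs over even $k$ only and so has no pole at $l_n(l_n+1)$. Then $\lambda_n^2=l_n(l_n+1)$ is a double eigenvalue, and its eigenspace contains $Y_{l_n}^0$, which is odd under the reflection $\theta\mapsto\pi-\theta$, and $G_n:=G_{\lambda_n^2}$, which is even. To identify $G_n$, solve the zonal ODE on each hemisphere separately: away from $\gamma$ it is the Legendre equation of degree exactly $l_n$. Regularity at the poles forces $G_n=c\,P_{l_n}(\abs{\cos\theta})=c\,\mathrm{sgn}(\cos\theta)P_{l_n}(\cos\theta)$. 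Hence, after normalization, $G_n=\mathrm{sgn}(\cos\theta)\,Y_{l_n}^0$ exactly, and in particular $\abs{G_n}^2=\abs{Y_{l_n}^0}^2$.

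The third step is to take combinations $u_n=\cos t\,Y_{l_n}^0+\sin t\,G_n$. These are $L^2$-normalized, because $\ip{Y_{l_n}^0}{G_n}$ vanishes by parity. On $\Omega_\pm$ they equal $(\cos t\pm\sin t)Y_{l_n}^0$, so
\[
\abs{u_n}^2=(1\pm\sin 2t)\abs{Y_{l_n}^0}^2 \quad\text{on } \Omega_\pm .
\]
Setting $m=1+\sin 2t$ sweeps out all of $[0,2]$. It then remains to use the classical asymptotics of Legendre polynomials,
\[
P_l(\cos\theta)\sim\sqrt{2/(\pi l\sin\theta)}\,\cos\big((l+\tfrac12)\theta-\tfrac\pi4\big),
\]
together with a Riemann--Lebesgue argument. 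This shows $\abs{Y_{l_n}^0}^2\D{x}\rightharpoonup\nu_\gamma$, with the normalization constant $2/\pi$ checked directly, and the possible non-integrable endpoint singularities of the limit density $1/\sin\theta$ are controlled by uniform bounds near the poles. This gives \eqref{Eq:singularQL}.

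For the last claim, any measure $\mu$ on $S^*\S[2]$ invariant under the geodesic flow is invariant under the flow along great circles, and every great circle spends half its length in each hemisphere. Disintegrating $\mu$ over closed geodesics therefore shows that its projection gives equal mass to $\Omega_+$ and $\Omega_-$, since $\gamma$ itself has measure zero for the projected limit. On the other hand, the measure in \eqref{Eq:singularQL} gives $\Omega_+$ mass $m/2$, so $m\neq1$ is impossible. I expect the main obstacle to be the second step: one has to check that the rigorous definition of the perturbation in Section \ref{Sec:Singular_perturbation_of_Laplacian} really admits $G_n$ as an eigenfunction. Formally $\ip{\delta_\gamma}{G_n}=0$ while $G_n$ has a nonzero normal derivative jump across $\gamma$, so $\alpha_n$ must be understood as the renormalized coupling given by the Krein/boundary-triple formulation, and that formulation has to be verified to produce exactly this eigenvalue and eigenfunction.
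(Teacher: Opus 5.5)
Your proposal is correct. It uses the same eigenfunctions as the paper's Theorem \ref{Thm:end-theorem-non-invariance}: a new eigenvalue placed on an odd eigenvalue $\lambda_{2\ell_n+1}^2$, and $u_n$ a unit combination of $z_{2\ell_n+1}^{q}$ and $g_{h_n}^{\gamma}$. At the key step, however, you take a genuinely different and more elementary route.

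The paper only approximates $g_{h_n}^{\gamma}$. It truncates its series (Theorem \ref{Thm:quasimodes-Ghgamma} with $\sigma=\frac12$), rewrites the truncation as $\Op[h_n]{\Gamma^{1/2,\Upsilon}} z_{2\ell_n+1}^{q}$ via ladder operators, and passes to the limit by symbolic calculus against $\mu_q$. It then lets $\Upsilon\to\infty$ with Carleson's theorem and projects with Lemma \ref{Lemma:projection_into_QL}.

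You instead identify the eigenfunction exactly: zonality, the Legendre equation on each hemisphere, regularity at the poles and evenness give $g_{h_n}^{\gamma}=\pm\operatorname{sgn}(\cos d(x,q))\,z_{2\ell_n+1}^{q}$. This also checks directly against \eqref{Eq:EF-expansion-Gzgamma}, since $\int_0^1 P_{2k}P_l\,dx = P_{2k}(0)P_l'(0)/(l(l+1)-\lambda_{2k}^2)$ for odd $l$. The function $\charf_J-\charf_K$ in the paper's limiting symbol $\varphi+\psi(\charf_J-\charf_K)$ is, up to sign, your $\operatorname{sgn}(\cos d(x,q))$ read along the geodesics from $q$. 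So the paper recovers asymptotically an identity that is exact.

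What each route buys:
\begin{itemize}
\item Yours then needs only the classical quantum limit of zonal harmonics, with no pseudodifferential calculus and no Carleson.
\item The paper's gives the phase-space measure $m_J\mu_{q,J}+m_K\mu_{q,K}$ directly; yours also gives it after cutting off away from $\gamma$.
\item More importantly, the paper's machinery also handles $\sigma_n\to\sigma\neq\frac12$ (Theorem \ref{Thm:SDM-newEF_is_muq}), where no closed form for $G_h^{\gamma}$ is available. Your identity is special to $h^{-2}\in\Specodd(\Lap)$, but that is all this statement requires.
\end{itemize}

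The step you flag as the main obstacle is exactly what Theorem \ref{Thm:Spectrum-Lapgammatheta} together with Lemma \ref{Lemma:Gzgamma-ugamma1-dom(Lapgammatheta)} provides: $G_\eta^{\gamma}$ is an eigenfunction of $\Lap_{\gamma,\theta}$ iff \eqref{Eq:equation_eta-newEV} holds. Your own remark that $\delta_\gamma G_n=0$ is the key to reading it. Since $G_n$ and $u_{\gamma1}$ both vanish on $\gamma$, the left-hand side of \eqref{Eq:equation_eta-newEV} is $0$ at $\eta=\lambda_{2\ell_n+1}^2$. Hence $\cotan\theta_n=0$, i.e.\ $\theta_n=\frac{\pi}{2}$ for every $n$.

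So your $1/\alpha_n=F(l_n(l_n+1))$ is actually $0$. The coupling is not a finite renormalized one but the ``$\alpha=\infty$'' member of the family, which the parametrization of Proposition \ref{Prop:family-perturbation-Lap-gamma} includes. A single operator $\Lap_{\gamma,\pi/2}$ serves for all $n$ and all $m$, and the paper's hypothesis \eqref{Eq:bad-sequence} in fact forces this same choice.

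Finally, in the non-invariance argument you do not need $\gamma$ to be null for the projected measure. A geodesic contained in $\gamma$ spends no time in either hemisphere, so every invariant measure projects to equal masses on $\Omega_+$ and $\Omega_-$.
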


The non-invariance can be understood as an outcome of a reflection phenomenon on the hypersurface $\gamma$ in the semiclassical limit. Reflection phenomena due to codimension-one perturbations of a Schrödinger operator in the semiclassical limit ($h \to 0^+$) have been  studied in the literature, mostly for Schrödinger operators $H_{h} \coloneqq h^2\Lap {}+ V$ with potentials having a finite-order conical singularity on some embedded hypersurface $Y \subseteq M$: $V \in \Cinf(M \setminus Y; \R)$, but $V \in \Cont^{k}(M)$ for some $k \in \N$. If $M = \R$ is equipped with the canonical metric, \cite{Berry1982} describes a connection through the WKB method between the regularity of $V$ on $Y = \{y_0\}$ and the size of the reflection term. More recently, \cite{FermanianKammererGerardLasser2013, Chabu2017, GannotWunsch2023} extended these results to higher dimensions and curved geometries. In \cite{GalkowskiWunsch2024, BurqDehmanLeRousseau2024}, invariance under classical dynamics was studied for semiclassical measures arising from sequences of eigenfunctions of $H_{h}$ in the semiclassical limit. Reflection phenomena also contribute to the spectral properties of these operators in that limit; see, for instance, \cite{WunschYangZou2025, Zou2025}. The perturbations analyzed in this article can be interpreted as a limiting case for these type of potentials.

\subsection{Outline of the proof}

Proving Theorems \ref{Thm:maintheorem-QuantumLimit} and \ref{Thm:maintheorem-invariantSDM} requires precise knowledge about the eigenfunctions of singular perturbations of $\Lap$ by $\delta_{\gamma}$. Section \ref{Sec:Singular_perturbation_of_Laplacian} is devoted to proving that these operators have compact resolvent, characterizing their spectrum, and computing their eigenspaces. Eigenfunctions $u$ can be classified into three categories: (a) old eigenfunctions, which are eigenfunctions of $\Lap$ with the added condition $\delta_{\gamma} u = 0$; (b) new eigenfunctions, which have a singularity along $\gamma$; and (c) linear combinations of old eigenfunctions and new eigenfunctions.

The family of old eigenfunctions is rich enough in the high-energy regime to produce any invariant probability measure; an application of \cite[Theorem 1.3]{Verdasco2026spheres} gives \eqref{Eq:contenido-sets-invariance}. On the other hand, new eigenfunctions have a very rigid structure: they are infinite linear combinations of zonal harmonics. First, in Section \ref{Sec:QuasimodesforGF} we show that a proper truncation of the series produces a quasimode approximation. In the proof of Theorem \ref{Thm:SDM-newEF_is_muq}, we use this quasimode approximation, ladder operators for zonal harmonics, which are $h$-$\Psi$DO, together with symbolic calculus to eventually approximate a new eigenfunction by a $h$-$\Psi$DO acting on a single zonal harmonic $z_{\ell_n}$, obtaining an asymptotic of the form
\[
\ip*{u_n}{\Op[h_n]{a} u_n }[L^2(\S[2])] = \ip*{z_{\ell_n}}{\Op[h_n]{a \abs*{\Gamma^{\Upsilon}}^2} z_{\ell_n} }[L^2(\S[2])] + \littleo(1_{n}) + \Upsilon^{-\frac{1}{2}} \BigO(1) \qquad \text{as $n \to \infty$} \ ,
\]
for a certain symbol $\Gamma^{\Upsilon}$ and a large parameter $\Upsilon \in \N$ that comes from the truncation procedure. Taking the limit $n \to \infty$ first, and then analyzing $\abs{\Gamma^{\Upsilon}}^2$, we find that $\abs{\Gamma^{\Upsilon}}^2 \to 1$ as $\Upsilon \to \infty$, thus any semiclassical measure arising from a sequence of high-energy new eigenfunctions is invariant under the geodesic flow. For most singular perturbations of $\Lap$ by $\delta_{\gamma}$, old and new eigenfunctions are the only eigenfunctions of the perturbed operator, thus \eqref{Eq:equality-sets-invariance} holds.

In this setting, a non-invariant semiclassical measure can arise only from a sequence of linear combinations of old and new eigenfunctions. To show that, in the proof of Theorem \ref{Thm:end-theorem-non-invariance}, we consider a sequence of linear combinations of zonal harmonics and new eigenfunctions
\[
u_n \coloneqq \varphi \, z_{\ell_n} + \psi \, g_{n} \ ,
\]
and using the approximation described above, one obtains an asymptotic of the form
\[
\ip*{u_n}{\Op[h_n]{a} u_n }[L^2(\S[2])] = \ip*{z_{\ell_n}}{\Op[h_n]{a \abs*{\varphi + \psi \Gamma^{\Upsilon}}^2} z_{\ell_n} }[L^2(\S[2])] + \littleo(1_{n}) + \Upsilon^{-\frac{1}{2}} \BigO(1) \qquad \text{as $n \to \infty$} \ .
\]
This time, after taking $n \to \infty$, the symbol $\abs{\varphi + \psi \Gamma^{\Upsilon}}^2$ converges to a non-constant function as $\Upsilon \to \infty$, hence obtaining a non-invariant semiclassical measure.

\subsection*{Acknowledgments}

The author is indebted to Fabricio Macià as this problem was suggested by him. This research has been supported by grants PID2021-124195NB-C31 and PID2024-158664NB-C21 from Agencia Estatal de Investigación (Spain), and by grant VPREDUPM22 from Programa Propio UPM.


\section{Singular perturbations of the Laplacian}
\label{Sec:Singular_perturbation_of_Laplacian}

\subsection{Description of singular perturbations of \texorpdfstring{$\Lap$}{Laplacian} by \texorpdfstring{$\delta_{\gamma}$}{delta-gamma}}
\label{subSec:Singular_perturbation-Description}

Let $\Lap$ denote the positive Laplace-Beltrami operator on $(\S[2], \mathrm{can.})$, and let $\D{x}$ denote the Riemannian measure on $\S[2]$. Consider the Hilbert space of complex-valued functions $L^2(\S[2], \D{x})$ with the inner product
\[
\ip{u}{v}[L^2(\S[2])] \coloneqq \int_{\S[2]} \conj{u(x)} v(x) \D{x} \ .
\]
As an operator on $L^2(\S[2])$, let $\Lap$ be defined on $\dom(\Lap) \coloneqq H^2(\S[2])$; then $\Lap$ is a self-adjoint operator on $L^2(\S[2])$.

Let $\gamma$ be a closed geodesic on $\S[2]$ and define the distribution
\begin{equation} \label{Eq:def-deltagamma}
    \delta_{\gamma} u \coloneqq \int_{0}^{2\pi} u(\gamma(s)) \frac{\D{s}}{2\pi} \ , \qquad u \in \Cinf(\S[2]) \ .
\end{equation}
This distribution can be continuously extended to $\dom(\Lap)$. Here, we are interested in operators that formally look like
\begin{equation} \label{Eq:Lap+deltagamma-formal}
    \Lap {}+ \alpha \ket{\delta_{\gamma}} \bra{\delta_{\gamma}}
\end{equation}
for some $\alpha \in \R$. These formal operators are considered singular perturbations of $\Lap$ because its natural domain is no longer $\dom(\Lap)$, but a different one; for every $u \in \dom(\Lap) = H^2(\S[2])$,
\[
\big[ \Lap {}+ \alpha \ket{\delta_{\gamma}} \bra{\delta_{\gamma}} \big] u = \Lap u + \alpha [\delta_{\gamma} u] \delta_{\gamma} \ ,
\]
which does not belong to $L^2(\S[2])$ as soon as $\delta_{\gamma} u \neq 0$.

These singular perturbations are defined via the von Neumann theory of symmetric extensions (see Appendix \ref{App:vonNeumann_theory}): they are self-adjoint extensions of the operator $T \coloneqq \Lap|_{\dom(T)}$ \footnote{In general, a symmetric operator may have no self-adjoint extensions, such as the operator $\frac{1}{i}\partial_{x}$ defined on $\CinfK(0, \infty) \subseteq L^2(0, \infty)$. See \cite[Lemma 10.1]{Nelson1959} for a geometric explanation of this example.}, where
\begin{equation} \label{Eq:def-op_T}
    \dom(T) \coloneqq \big\{ u \in \dom(\Lap) \colon \, \delta_{\gamma} u = 0 \big\} \ . 
\end{equation}

\begin{Prop} \label{Prop:family-perturbation-Lap-gamma}
    Let $\gamma$ be a closed geodesics in $\S[2]$, and let $\Omega_{+}$ and $\Omega_{-}$ be the two connected components of $\S[2] \setminus \gamma$. Define the following two functions on $L^2(\S[2])$
    \begin{equation} \label{Eq:functions-ugamma01}
        u_{\gamma 0}(x) \coloneqq 1 \ , \quad u_{\gamma 1}(x) \coloneqq - \tfrac{1}{2} \sin \big( d(x, \gamma) \big) \ , \qquad x \in \S[2] \ .
    \end{equation}
    The self-adjoint extensions of $T$ are the operators $\{ \Lap_{\gamma, \theta}\}_{\theta \in \Theta}$, $\Theta = \R / (\pi \Z)$, where
    \begin{equation} \label{Eq:dom(Lapgammatheta)}
        \dom(\Lap_{\gamma, \theta}) \coloneqq \dom(T) + \Span[\C]{ \cos \theta \ u_{\gamma 0} + \sin \theta \ u_{\gamma 1} }
    \end{equation}
    and for $u_0 \in \dom(T)$, $c \in \C$,
    \begin{equation} \label{Eq:operator-Lapgammatheta}
        \Lap_{\gamma, \theta} \big[ u_{0} + c (\cos \theta \ u_{\gamma 0} + \sin \theta \ u_{\gamma 1}) \big] \coloneqq \Lap u_{0} + c (2 \sin \theta \ u_{\gamma 1} )\ .
    \end{equation}
    In other words, $\Lap_{\gamma, \theta} \coloneqq T^*|_{\dom(\Lap_{\gamma, \theta})}$. Moreover, $\dom(\Lap_{\gamma, 0}) = \dom(\Lap)$, hence $\Lap_{\gamma, 0} = \Lap$.
\end{Prop}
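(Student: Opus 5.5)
The plan is to apply the von Neumann theory of Appendix \ref{App:vonNeumann_theory} in its boundary-form version. The steps are: identify $T^*$, show that $\dom(T^*)/\dom(T)$ is two-dimensional and spanned by the classes of $u_{\gamma 0}$ and $u_{\gamma 1}$, and then classify the Lagrangian lines for the boundary form $\omega(u,v) \coloneqq \ip{T^*u}{v} - \ip{u}{T^*v}$ on this quotient.

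\textbf{Step 1 (description of $T^*$).} First I check that $T$ is closed, densely defined and symmetric. Closedness holds because $\delta_\gamma$ is continuous on $H^2(\S[2])$. Density holds because $\delta_\gamma \notin L^2(\S[2])$. Next I describe $T^*$. Since $\dom(T)$ is the kernel in $H^2$ of the functional $\delta_\gamma$, the annihilator of $\Lap(\dom(T))$ in the distributional sense gives
\[
\dom(T^*) = \{ v \in L^2(\S[2]) \colon \Lap v \in L^2(\S[2]) + \C\,\delta_\gamma \ \text{in } \Dist(\S[2]) \},
\]
where $T^*v$ is the $L^2$ part of $\Lap v$. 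The deficiency spaces $\ker(T^* \mp i)$ are spanned by $(\Lap \mp i)^{-1}\delta_\gamma$. These lie in $L^2$, since $\delta_\gamma \in H^{-1/2-\varepsilon}(\S[2])$ and so $(\Lap \mp i)^{-1}\delta_\gamma \in H^{3/2-\varepsilon}$. They are not in $H^2$. Hence the deficiency indices are $(1,1)$ and $\dim \dom(T^*)/\dom(T) = 2$.

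\textbf{Step 2 (the explicit basis).} Take latitude coordinates with $\gamma = \{\phi = 0\}$, so that $d(x,\gamma) = \abs{\phi}$. The function $\sin\phi$ is a degree-one spherical harmonic, so $\Lap \sin\phi = 2\sin\phi$. The function $\sin\abs{\phi}$ is Lipschitz, smooth off $\gamma$, and its normal derivative jumps by $2$ across $\gamma$. By Green's formula, in distributions,
\[
\Lap u_{\gamma 1} = 2 u_{\gamma 1} + c_0\, \delta_\gamma
\]
for an explicit constant $c_0 \neq 0$ (a multiple of $2\pi$ coming from the normalization of $\delta_\gamma$). Therefore $u_{\gamma 1} \in \dom(T^*)$ with $T^* u_{\gamma 1} = 2 u_{\gamma 1}$. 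Trivially $u_{\gamma 0} \in \dom(T^*)$ with $T^* u_{\gamma 0} = 0$.

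These two functions are independent modulo $\dom(T)$:
\begin{itemize}
\item $u_{\gamma 0} \in H^2$ but $\delta_\gamma u_{\gamma 0} = 1 \neq 0$, so $u_{\gamma 0} \notin \dom(T)$;
\item no combination $a u_{\gamma 0} + b u_{\gamma 1}$ with $b \neq 0$ lies in $H^2$, because of the kink of $u_{\gamma 1}$ along $\gamma$.
\end{itemize}
By Step 1, $\dom(T^*) = \dom(T) \oplus \Span[\C]{u_{\gamma 0}, u_{\gamma 1}}$. This also yields formula \eqref{Eq:operator-Lapgammatheta} for $T^*$ on this sum.

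\textbf{Step 3 (Lagrangian lines).} The form $\omega$ vanishes whenever one argument lies in $\dom(T)$, so it descends to the quotient $\C^2$. Both $u_{\gamma 0}$ and $u_{\gamma 1}$ are real-valued, hence $\omega(u_{\gamma j}, u_{\gamma j}) = 0$. Green's formula with the jump computed in Step 2 gives $\omega(u_{\gamma 0}, u_{\gamma 1}) = \kappa$ for some real $\kappa \neq 0$. It follows that
\[
\omega(a u_{\gamma 0} + b u_{\gamma 1},\, a u_{\gamma 0} + b u_{\gamma 1}) = 2i\kappa\, \mathrm{Im}(\bar a b)
\]
up to sign. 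The self-adjoint extensions correspond to one-dimensional subspaces $\C(a u_{\gamma 0} + b u_{\gamma 1})$ on which $\omega$ vanishes, i.e.\ with $\bar a b \in \R$. After rescaling, these are exactly the lines spanned by $\cos\theta\, u_{\gamma 0} + \sin\theta\, u_{\gamma 1}$ with $\theta \in \R/\pi\Z$. The case $\theta = 0$ gives $\dom(T) + \C\,1 = H^2(\S[2])$, so $\Lap_{\gamma,0} = \Lap$.

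\textbf{Main obstacle.} The step I expect to be the main obstacle is the rigorous verification of the jump formula for $\Lap u_{\gamma 1}$, including the exact constant $c_0$. This constant must be tracked to match the normalization $\D{s}/2\pi$ in $\delta_\gamma$. It feeds both into $T^* u_{\gamma 1} = 2 u_{\gamma 1}$ and into the nonvanishing of $\kappa$.
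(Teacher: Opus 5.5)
Your proof is correct. Its final step, classifying the lines $\C(a u_{\gamma 0}+b u_{\gamma 1})$ on which the boundary form vanishes via $\omega(v,v)=\pm 2i\kappa\,\mathrm{Im}(\bar a b)$, is the same as the paper's.

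The genuine difference is how you get $\dim(\dom(T^*)/\dom(T))=2$. You compute the deficiency spaces from the distributional description $\dom(T^*)=\{v\in L^2:\Lap v\in L^2+\C\,\delta_\gamma\}$. This shows $\ker(T^*\mp i)=\C\,(\Lap\mp i)^{-1}\delta_\gamma$ with $(\Lap\mp i)^{-1}\delta_\gamma\in L^2\setminus H^2$, and you then apply von Neumann's decomposition. The paper does no analysis at this point. It observes that $\Lap$ is already a self-adjoint extension of $T$ with $\dim(\dom(\Lap)/\dom(T))=1$, and the abstract Lemma~\ref{Lemma:def-J}(3) gives $2\cdot 1=2$.

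The paper also uses Lemma~\ref{Lemma:def-J}(3) to conclude that every self-adjoint extension is a one-dimensional extension of $\dom(T)$. You leave this implicit; it follows from your deficiency indices $(1,1)$, or from nondegeneracy of $\omega$ on the quotient (Lemma~\ref{Lemma:characterization_dom(T)-via_omega}).

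The paper's route is shorter but needs one self-adjoint extension known in advance. Yours is the standard one and gives more: the description of $\dom(T^*)$ yields $T^*u_{\gamma 1}=2u_{\gamma 1}$ at once and makes Proposition~\ref{Prop:structure-EF-T*} nearly immediate. For $(\Lap\mp i)^{-1}\delta_\gamma\in L^2$, the fact $\delta_\gamma\in H^{-2}$ already suffices; the trace-theorem regularity is more than you need.

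The obstacle you anticipate is not really one, because $c_0$ plays no role in this proposition. The constant $\kappa$ follows from the values of $T^*$ alone: $\omega(u_{\gamma 0},u_{\gamma 1})=-\langle 1,2u_{\gamma 1}\rangle=\int_{\mathbb{S}^2}\sin d(x,\gamma)\,dx=2\pi$. Up to your sign convention, this is \eqref{Eq:omega(ugamma01)}.

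If you want $c_0$ anyway, pair your jump formula with the constant function $1$: $0=2\int u_{\gamma 1}\,dx+c_0$, so $c_0=2\pi$. This is the constant behind the $\frac{1}{2\pi}$ in Lemma~\ref{Lemma:Gzgamma-ugamma1-dom(Lapgammatheta)}. Accordingly, the integration-by-parts identity in the proof of Proposition~\ref{Prop:computation-dom(T*)} should read $2\pi\,\delta_\gamma w$ rather than $\delta_\gamma w$; testing it with $w=1$ shows this.
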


We leave the proof of Proposition \ref{Prop:family-perturbation-Lap-gamma} to the end of the section, as we need some auxiliary lemmas. 

\begin{Lemma} \label{Lemma:T-closed-symmetric}
    The operator $T$ is a closed symmetric operator in $L^2(\S[2])$.
\end{Lemma}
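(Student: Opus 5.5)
We split the claim into symmetry and closedness, both obtained from properties of $\Lap$ together with the continuity of $\delta_{\gamma}$ on $\dom(\Lap)=H^2(\S[2])$.

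\textbf{Symmetry.} Since $\dom(T)\subseteq \dom(\Lap)$ and $T=\Lap|_{\dom(T)}$, for $u,v\in\dom(T)$ we have
\[
\ip{Tu}{v}[L^2(\S[2])]=\ip{\Lap u}{v}[L^2(\S[2])]=\ip{u}{\Lap v}[L^2(\S[2])]=\ip{u}{Tv}[L^2(\S[2])]
\]
by self-adjointness of $\Lap$. It remains to check that $\dom(T)$ is dense in $L^2(\S[2])$. I would argue as follows. $\delta_{\gamma}$ is a continuous linear functional on $H^2(\S[2])$ (trace theorem: $H^2(\S[2])\to H^{3/2}(\gamma)\subseteq\Cont(\gamma)$). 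It is not continuous for the $L^2$ norm: take $\chi_\varepsilon\in\Cinf(\S[2])$ equal to $1$ on $\gamma$ and supported in an $\varepsilon$-tubular neighbourhood, so that $\delta_\gamma\chi_\varepsilon=1$ while $\norm{\chi_\varepsilon}[L^2]\to0$. The kernel of a linear functional on a dense subspace that is unbounded in the ambient norm is dense. Concretely, given $f\in H^2(\S[2])$, the functions $f-(\delta_\gamma f)\chi_\varepsilon$ lie in $\dom(T)$ and converge to $f$ in $L^2$. Since $H^2(\S[2])$ is dense in $L^2(\S[2])$, so is $\dom(T)$.

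\textbf{Closedness.} Let $u_k\in\dom(T)$ with $u_k\to u$ and $Tu_k\to f$ in $L^2(\S[2])$. Because $\Lap$ is self-adjoint, hence closed, $u\in H^2(\S[2])$ and $\Lap u=f$. Moreover, elliptic regularity gives
\[
\norm{w}[H^2]\le C\bigl(\norm{\Lap w}[L^2]+\norm{w}[L^2]\bigr),
\]
so the graph norm of $\Lap$ is equivalent to the $H^2$ norm. Hence $u_k\to u$ in $H^2$. By continuity of $\delta_\gamma$ on $H^2$, we get $\delta_\gamma u=\lim_k\delta_\gamma u_k=0$. Therefore $u\in\dom(T)$ and $Tu=f$, and $T$ is closed. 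Equivalently, $\dom(T)$ is a closed subspace of $\dom(\Lap)$ in the graph norm, being the kernel of a bounded functional.

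\textbf{Main obstacle.} The only point needing care is the continuity of $\delta_\gamma$ on $H^2(\S[2])$, which the paper asserts just after \eqref{Eq:def-deltagamma}. If a self-contained argument is wanted, Sobolev embedding $H^2(\S[2])\hookrightarrow\Cont(\S[2])$ in dimension two gives $\abs{\delta_\gamma u}\le\norm{u}[L^\infty]\le C\norm{u}[H^2]$ directly.
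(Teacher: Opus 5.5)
Your proof is correct and follows the same route as the paper. Density of $\dom(T)$ comes from the $L^2$-unboundedness of $\delta_{\gamma}$, closedness from $\dom(T)$ being the kernel of an $H^2$-bounded functional in $\dom(\Lap)$, and symmetry from $T$ being a restriction of $\Lap$. You additionally make explicit what the paper leaves implicit: the cutoffs $\chi_\varepsilon$ for density, and the elliptic estimate showing that the graph norm of $\Lap$ is equivalent to the $H^2$ norm.
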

\begin{proof}
    The distribution $\delta_{\gamma}$ \eqref{Eq:def-deltagamma} is not bounded in $L^2(\S[2])$, therefore $\dom(T)$ is still a dense subspace of $L^2(\S[2])$. Nonetheless, $\delta_{\gamma}$ is bounded in the $H^2(\S[2])$ topology, hence $\dom(T)$ is a closed one-codimensional subspace of $H^2(\S[2]) = \dom(\Lap)$, therefore $T$ is a closed operator. Moreover, since $T$ is the restriction of a symmetric operator, $\Lap$, $T$ is itself symmetric.
\end{proof}

In order to find all self-adjoint extensions of $T$, one starts considering the adjoint operator $T^*$. Recall the definition of $\dom(T^*)$,
\begin{equation} \label{Eq:def-T*op-section2}
    \dom(T^*) \coloneqq \Big\{ u \in L^2(\S[2]) \colon \ \exists \, C_{u} > 0 \ \text{s.t.} \ \abs{\ip{u}{T w}[L^2(\S[2])]} \leq C_{u} \norm{w}[L^2(\S[2])] \quad \forall \, w \in \dom(T) \, \Big\} \ ,
\end{equation}
and the definition of $T^*u$ via the Riesz representation theorem: for $u \in \dom(T^*)$, $T^*u \in L^2(\S[2])$ is the unique vector such that
\[
\ip{T^*u}{w}[L^2(\S[2])] = \ip{u}{T w}[L^2(\S[2])] \qquad \forall \, w \in \dom(T) \ .
\]

\begin{Prop} \label{Prop:computation-dom(T*)}
    Let $\gamma$ be a closed geodesic in $\S[2]$ and consider $u_{\gamma 0}$, $u_{\gamma 1}$ as defined in \eqref{Eq:functions-ugamma01}. If $T$ is defined as in \eqref{Eq:def-op_T}, then $\dim(\dom(T^*)/ \dom(T)) = 2$ and
    \[
    \dom(T^*) = \dom(T) + \Span[\C]{u_{\gamma 0}, u_{\gamma 1}} \ ,
    \]
    and
    \begin{equation} \label{Eq:T*ugamma01}
        T^* u_{\gamma 0} = 0 \ , \qquad T^* u_{\gamma 1} = 2 u_{\gamma 1} \ .
    \end{equation}
\end{Prop}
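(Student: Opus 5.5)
The plan is to describe $\dom(T^*)$ as the set of $u \in L^2(\S[2])$ whose distributional Laplacian equals an $L^2$ function plus a multiple of $\delta_{\gamma}$. Then I will show that subtracting a suitable multiple of $u_{\gamma 1}$ removes the singular part, so that the remainder lies in $H^2(\S[2])$ by elliptic regularity.

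\textbf{Step 1: characterize $\dom(T^*)$.} Let $u \in \dom(T^*)$ and put $f \coloneqq T^*u \in L^2(\S[2])$. Then $\ip{u}{\Lap w}[L^2(\S[2])] = \ip{f}{w}[L^2(\S[2])]$ for all $w \in \dom(T)$, in particular for all $w \in \Cinf(\S[2])$ with $\delta_{\gamma} w = 0$. Consider the distribution $\Lap u - f$, where $\Lap u$ is understood distributionally. It is a linear functional on $\Cinf(\S[2])$ vanishing on $\ker \delta_{\gamma}$. By elementary linear algebra (a functional vanishing on the kernel of another functional is a multiple of it), $\Lap u = f + c\, \delta_{\gamma}$ in $\Dist(\S[2])$ for some $c \in \C$. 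Conversely, any $u \in L^2$ with $\Lap u = f + c\delta_{\gamma}$ and $f \in L^2$ satisfies $\ip{u}{Tw} = \ip{f}{w}$ for $w \in \dom(T)$. This uses density of $\Cinf$ in $H^2$ and the $H^2$-continuity of $\delta_{\gamma}$ from Lemma~\ref{Lemma:T-closed-symmetric}. Hence such $u$ lies in $\dom(T^*)$ with $T^*u = f$: $T^*$ is the distributional Laplacian with its $\delta_{\gamma}$-component discarded.

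\textbf{Step 2: compute $\Lap u_{\gamma 1}$.} I will use Fermi coordinates $(s,t)$ around $\gamma$, with $t$ the signed distance to $\gamma$, so that $\D{x} = \cos t \, \D{s}\,\D{t}$. For functions of $t$ alone, $\Lap f = -(f'' - \tan t\, f')$. Take $f(t) = -\tfrac12 \sin\abs{t}$. Away from $t=0$ we have $f'' = \tfrac12\sin\abs{t}$ and $-\tan t\, f' = \tfrac12 \sin\abs{t}$, which give $-\sin\abs{t} = 2f$. The kink of $f'$ at $t = 0$ (jump $-1$) contributes $-f'' \supset \delta(t)$. In terms of the measure $\delta_{\gamma}$ normalized by $\D{s}/2\pi$, this gives
\[
\Lap u_{\gamma 1} = 2u_{\gamma 1} + 2\pi\,\delta_{\gamma}
\]
as distributions. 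I would verify this carefully by integrating against test functions and checking the boundary terms on $\{t = \pm\epsilon\}$. The same computation near the poles $t = \pm\pi/2$ is harmless, since $\sin\abs{t}$ is smooth there as a function on $\S[2]$ (it is a function of the height coordinate $\abs{x_3}$). By Step 1, $u_{\gamma 1} \in \dom(T^*)$ and $T^*u_{\gamma 1} = 2u_{\gamma 1}$. Trivially $u_{\gamma 0} = 1 \in H^2$ with $\Lap 1 = 0$, so $T^*u_{\gamma 0} = 0$.

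\textbf{Step 3: decomposition.} Given $u \in \dom(T^*)$ with $\Lap u = f + c\delta_{\gamma}$, set $v \coloneqq u - \tfrac{c}{2\pi}u_{\gamma 1}$. Then $\Lap v \in L^2$, so $v \in H^2(\S[2])$ by elliptic regularity. Since $\delta_{\gamma}$ is a nonzero continuous functional on $H^2$ with $\delta_{\gamma} 1 = 1$, we can write $H^2 = \dom(T) \oplus \Span[\C]{u_{\gamma 0}}$. This gives $\dom(T^*) = \dom(T) + \Span[\C]{u_{\gamma 0}, u_{\gamma 1}}$.

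For the dimension count, I need $u_{\gamma 1} \notin H^2$. This holds because its distributional Laplacian has a nonzero $\delta_{\gamma}$ part; equivalently, the normal derivative of $u_{\gamma 1}$ jumps across $\gamma$. Consequently, no nontrivial combination $a u_{\gamma 0} + b u_{\gamma 1}$ lies in $\dom(T)$. If $b \neq 0$ the combination is not in $H^2$; if $b = 0$ then $\delta_{\gamma}(a u_{\gamma 0}) = a$, which must vanish. Hence the quotient has dimension exactly $2$.

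The main obstacle is the distributional computation in Step 2, namely getting the coefficient of $\delta_{\gamma}$ and its normalization right. Only its nonvanishing is essential, though; the exact constant matters only for the formula for $\Lap_{\gamma,\theta}$ later.
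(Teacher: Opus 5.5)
Your proof is correct, but it runs in the opposite direction from the paper's. The paper obtains $\dim(\dom(T^*)/\dom(T)) = 2$ from the abstract Lemma~\ref{Lemma:def-J}(3), using that $\Lap$ is a self-adjoint extension of $T$ with $\dim(\dom(\Lap)/\dom(T)) = 1$. It then only has to exhibit $u_{\gamma 1} \in \dom(T^*) \setminus \dom(\Lap)$ with $T^*u_{\gamma 1} = 2u_{\gamma 1}$, which it does via Green's formula on $\Omega_{\pm}$. The description $\dom(T^*) = \dom(T) + \mathrm{Span}\{u_{\gamma 0}, u_{\gamma 1}\}$ then follows by counting dimensions.

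You instead characterize $\dom(T^*)$ concretely as $\{u \in L^2 : \Lap u \in L^2 + \mathbb{C}\,\delta_{\gamma}\}$. You then remove the singular part with $u_{\gamma 1}$ and use elliptic regularity, so the dimension count is an output rather than an input. In Step 1, with the inner product conjugate-linear in the first slot, you actually get $\Lap \bar u - \bar f \in \mathbb{C}\,\delta_{\gamma}$; this is harmless because $\Lap$ and $\delta_{\gamma}$ are real.

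Your route avoids the von Neumann machinery at the price of elliptic regularity. It also gives extra information: $T^*u$ is the distributional Laplacian of $u$ with its $\delta_{\gamma}$-component discarded. From this, later facts such as the value $c = 1/(2\pi)$ in Lemma~\ref{Lemma:Gzgamma-ugamma1-dom(Lapgammatheta)} are immediate.

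Your constant in $\Lap u_{\gamma 1} = 2u_{\gamma 1} + 2\pi\,\delta_{\gamma}$ is the correct one for the normalization $ds/2\pi$ of $\delta_{\gamma}$. The identity displayed in the paper's proof omits this factor $2\pi$: testing it against $w = 1$ would give $1 = 2\pi$, since $\int \sin(d(x,\gamma))\,dx = 2\pi$. This slip does not affect the statement being proved.
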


\begin{proof}
    Thanks to Lemma \ref{Lemma:T-closed-symmetric} we know that $T$ is a closed symmetric operator. Observe that $\Lap$ is a self-adjoint extension of $T$ and that $\dim( \dom(\Lap) / \dom(T) ) = 1$ because $\dom(T)$ is the kernel of a linear functional on $\dom(\Lap)$, thus Lemma \ref{Lemma:def-J}(3) implies $\dim (\dom(T^*) / \dom(T) ) = 2$. It is clear that
    \begin{equation} \label{Eq:dom(Lap)=dom(T)+Span(ugamma0)}
        \dom(\Lap) = \dom(T) + \Span[\C]{u_{\gamma 0}} \ , \qquad \text{and} \quad u_{\gamma 1} \notin \dom(\Lap) \ .
    \end{equation}
    In addition, using that
    \[
    [\Lap u_{\gamma 1}] (x) = 2 u_{\gamma 1} (x) \qquad \forall \, x \in \Omega_{+} \cup \Omega_{-} \ ,
    \]
    and integration by parts on $\Omega_{+}$ and $\Omega_{-}$, we find that for all $w \in H^2(\S[2])$,
    \[
    \ip{u_{\gamma 1}}{\Lap w}[L^2(\S[2])] = \delta_{\gamma} w + \ip{2 u_{\gamma 1}}{w}[L^2(\S[2])] \ .
    \]
    Therefore, $u_{\gamma 1} \in \dom(T^*)$ and $T^* u_{\gamma 1} = 2 u_{\gamma 1}$.
\end{proof}

\begin{proof}[Proof of Proposition \texorpdfstring{\ref{Prop:family-perturbation-Lap-gamma}}{perturbaciones Lap gamma}]
    Let $A$ be a self-adjoint extension of $T$. Since $A$ is self-adjoint, then $A = A^* \subset T^*$, thus $A = T^*|_{\dom(A)}$ for some subspace $\dom(T) \subseteq \dom(A) \subseteq \dom(T^*)$. Thanks to Proposition \ref{Prop:self-adjoint-characterization}, we know that $\dom(A)$ must be a Lagrangian subspace for the skew-Hermitian form
    \begin{equation} \label{Eq:def-omega-sec2}
        \omega(u, v) \coloneqq \ip{u}{T^*v} - \ip{T^*u}{v} \ , \qquad u, v \in \dom(T^*) \ .
    \end{equation}
    Thanks to Proposition \ref{Prop:computation-dom(T*)}, we know that
    \[
    \dom(T^*) = \dom(T) + \Span[\C]{u_{\gamma 0}, u_{\gamma 1}} \ ,
    \]
    thus
    \[
    \dom(A) = \dom(T) + \Span[\C]{v}
    \]
    for some $v \in \Span[\C]{u_{\gamma 0}, u_{\gamma 1}}$, due to Lemma \ref{Lemma:def-J}(3). Thanks to Lemma \ref{Lemma:characterization_dom(T)-via_omega}, it is necessary and sufficient for $\dom(A)$ to be Lagrangian that $\omega(v, v) = 0$.

    Note that
    \begin{equation} \label{Eq:omega(ugamma01)}
        \begin{aligned}
            \omega(u_{\gamma 0}, u_{\gamma 0}) & = 0 \\
            \omega(u_{\gamma 1}, u_{\gamma 1}) & = 0 \\
            \omega(u_{\gamma 1}, u_{\gamma 0}) & = 0 - \ip{2 u_{\gamma 1}}{ u_{\gamma 0}} = \int_{\Omega_{+}} \sin(d(x, \gamma)) \D{x} = 2\pi
        \end{aligned}
    \end{equation}
    therefore, if we write $v = a u_{\gamma 0} + b u_{\gamma 1}$ for $a,b \in \C$, we see that
    \[
    \omega(v,v) = 0 \quad \iff \quad \conj{b} a - \conj{a} b = 0 \quad \iff \quad \Im(\conj{b} a) = 0 \ .
    \]
    Without loss of generality, we may assume that $a \in \R$ and that $\abs{a}^2 + \abs{b}^2 = 1$, hence $b \in \R$ and the only solutions are
    \[
    \begin{dcases}
        a = \cos \theta \\ 
        b = \sin \theta
    \end{dcases}
    \ , \qquad \theta \in \R \ .
    \]
    This gives that $\dom(A)$ must be of the form \eqref{Eq:dom(Lapgammatheta)}. \eqref{Eq:operator-Lapgammatheta} is a direct consequence of \eqref{Eq:T*ugamma01} and $A = T^*|_{\dom(A)}$.

    The fact that setting $\theta = 0$ lets you recover the Laplacian $\Lap$ is equivalent to the identity
    \[
    \dom(\Lap) = \dom(T) + \Span[\C]{u_{\gamma 0}} = \dom(\Lap_{\gamma, 0}) \ . \qedhere
    \]
\end{proof}

\subsection{Spectrum of singular perturbations of \texorpdfstring{$\Lap$}{Laplacian} by \texorpdfstring{$\delta_{\gamma}$}{delta-gamma}}
\label{subSec:Spectrum-Lapgammatheta}

Thanks to the Rellich-Kondrachov theorem, the embedding $H^2(\S[2]) \hookrightarrow L^2(\S[2])$ is compact; thus $\Lap$ has compact resolvent and
\[
\Spec(\sqLap) = \{ \lambda_{\ell} \}_{\ell \in \N} \subseteq [0, \infty) \ , \qquad \lambda_{\ell}^2 = \ell(\ell + 1) \ .
\]
In this section we will prove that operators $\Lap_{\gamma, \theta}$ have compact resolvent, among other of their spectral properties.

For $z \in \C \setminus \Spec(\Lap)$, let $G_{z}^{\gamma} \in L^2(\S[2])$ be such that
\begin{equation} \label{Eq:def-Gzgamma}
    \ip{G_{z}^{\gamma}}{ (\Lap {}- \conj{z}) u}[L^2(\S[2])] = \delta_{\gamma} u 
\end{equation}
for all $u \in \dom(\Lap) = H^2(\S[2])$. The existence and uniqueness of the function $G_{z}^{\gamma}$ for $z \in \C \setminus \Spec(\Lap)$ is granted by the Riesz representation theorem: for every $z \in \C \setminus \Spec(\Lap)$, the functional
\[
F_{\gamma, z} \colon L^2(\S[2]) \to \C \ , \qquad F_{\gamma, z}w = \delta_{\gamma} \big[ (\Lap {}- \conj{z})^{-1} w \big]
\]
is bounded in $L^2(\S[2])$, hence there exists a unique $G_{z}^{\gamma} \in L^2(\S[2])$ such that
\[
F_{\gamma, z}w = \ip{G_{z}^{\gamma}}{w}[L^2(\S[2])] \qquad \forall \, w \in L^2(\S[2]) \ .
\]
In order to introduce the main result of this section, Theorem \ref{Thm:Spectrum-Lapgammatheta}, we need to state first a technical proposition that extends the definition of $G_{z}^{\gamma}$ from $\C \setminus \Spec(\Lap)$ to $\C \setminus \Speceven(\Lap)$, where
\begin{equation} \label{Eq:def-Speceven(Lap)}
    \Speceven(\Lap) \coloneqq \{ \lambda_{\ell}^2 \colon \ \text{$\ell \in \N$ even} \} \ .
\end{equation}

\begin{Prop} \label{Prop:Existence-Uniqueness-Gzgamma}
    Let $\gamma$ be a closed geodesic in $\S[2]$. The holomorphic map
    \begin{equation} \label{Eq:def-holomap-calGgamma}
        \mathcal{G}^{\gamma} \colon \C \setminus \Spec(\Lap) \to L^2(\S[2]) \ , \qquad \mathcal{G}^{\gamma}(z) = G_{z}^{\gamma} \ ,
    \end{equation}
    admits a (unique) holomorphic extension to $\C \setminus \Speceven(\Lap)$ such that
    \begin{equation} \label{Eq:def-Gzgamma-extension}
        \ip{G_{z}^{\gamma} }{ (\Lap {}- \conj{z}) u}[L^2(\S[2])] = \delta_{\gamma} u \qquad \forall \, u \in \dom(\Lap) 
    \end{equation}
    holds for all $z \in \C \setminus \Speceven(\Lap)$\footnote{We make the little abuse of notation by writing $G_{z}^{\gamma}$ instead of $\mathcal{G}^{\gamma}(z)$.}. Consequently
    \begin{equation} \label{Eq:T*Gzgamma}
        T^* G_{z}^{\gamma} = z \, G_{z}^{\gamma} \qquad \forall \, z \in \C \setminus \Speceven(\Lap) \ .
    \end{equation}
    See \eqref{Eq:EF-expansion-Gzgamma} for an eigenfunction expansion of $G_{z}^{\gamma}$.
\end{Prop}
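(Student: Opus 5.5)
The plan is to work with the spectral expansion of $G_{z}^{\gamma}$ in spherical harmonics and to show that the residues at the odd eigenvalues vanish. Let $\Pi_\ell$ denote the orthogonal projector onto the eigenspace $E_\ell$ of $\Lap$ with eigenvalue $\lambda_\ell^2$. For $z\notin\Spec(\Lap)$, taking $u=(\Lap-\conj z)^{-1}w$ in \eqref{Eq:def-Gzgamma} gives
\[
G_{z}^{\gamma}=\sum_{\ell\in\N}\frac{1}{\lambda_\ell^2-z}\,g_\ell ,
\]
with convergence in $L^2(\S[2])$. Here $g_\ell\in E_\ell$ is the unique element satisfying $\ip{g_\ell}{e}=\delta_\gamma e$ for all $e\in E_\ell$; equivalently, $g_\ell$ is the $E_\ell$-component of the distribution $\delta_\gamma$.

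I will choose coordinates so that $\gamma$ is the equator and the poles are $\pm N$. Since $\delta_\gamma$ is invariant under rotations about the polar axis, $g_\ell$ is a zonal function, so $g_\ell=c_\ell z_\ell$ with $z_\ell(x)$ proportional to $P_\ell(\cos d(x,N))$. The coefficient is $c_\ell=\delta_\gamma z_\ell\propto P_\ell(0)$, and $P_\ell(0)=0$ for odd $\ell$. Hence $g_\ell=0$ whenever $\ell$ is odd, and the odd eigenvalues are removable singularities. An equivalent way to see this is that $g_\ell$ is invariant under the reflection across $\gamma$, while odd-degree harmonics are odd under that reflection.

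Next I define, for $z\in\C\setminus\Speceven(\Lap)$,
\[
G_{z}^{\gamma}:=\sum_{\ell \text{ even}}\frac{g_\ell}{\lambda_\ell^2-z}.
\]
The main point is convergence in $L^2$, locally uniformly in $z$. Using $\norm{g_\ell}^2=\abs{c_\ell}^2$ with $L^2$-normalized $z_\ell$, we have $\abs{z_\ell(\text{equator})}^2=\frac{2\ell+1}{4\pi}P_\ell(0)^2\sim C$, because $P_{2k}(0)^2\sim 1/(\pi k)$. So $\norm{g_\ell}^2=\BigO(1)$, and then $\sum_\ell\norm{g_\ell}^2\abs{\lambda_\ell^2-z}^{-2}\lesssim\sum_\ell \ell^{-4}<\infty$, uniformly on compact subsets that avoid $\Speceven(\Lap)$. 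A locally uniform limit of holomorphic $L^2$-valued partial sums is holomorphic. The extension agrees with $\mathcal G^\gamma$ on $\C\setminus\Spec(\Lap)$, and since the domain is connected and the odd points are isolated, the identity theorem gives uniqueness.

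To verify \eqref{Eq:def-Gzgamma-extension}, I take $u\in H^2$ and expand $u=\sum_\ell\Pi_\ell u$. Then
\[
\ip{G_z^\gamma}{(\Lap-\conj z)u}=\sum_{\ell\text{ even}}\ip{g_\ell}{\Pi_\ell u}=\sum_{\ell}\delta_\gamma\Pi_\ell u=\delta_\gamma u.
\]
The odd terms contribute nothing because $\delta_\gamma\Pi_\ell u=\ip{g_\ell}{\Pi_\ell u}=0$ for odd $\ell$. The last equality uses the fact that $\sum_\ell\Pi_\ell u\to u$ in $H^2$ and that $\delta_\gamma$ is continuous on $H^2$. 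Alternatively, one can argue by analytic continuation: both sides are holomorphic, or constant, in $z$ and agree off $\Spec(\Lap)$.

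Finally, for $w\in\dom(T)$ we have $\delta_\gamma w=0$. So \eqref{Eq:def-Gzgamma-extension} gives $\ip{G_z^\gamma}{Tw}=\ip{\conj z G_z^\gamma}{w}$, which means $G_z^\gamma\in\dom(T^*)$ and $T^*G_z^\gamma=zG_z^\gamma$. This is \eqref{Eq:T*Gzgamma}.

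The step I expect to be the main obstacle is identifying $g_\ell$ explicitly and controlling $\norm{g_\ell}$. The vanishing of $g_\ell$ for odd $\ell$ is the crux of the argument. The norm bound follows from the asymptotics of $P_{2k}(0)$, and even the crude estimate $\norm{g_\ell}=\BigO(\ell^{1/2})$ from Sobolev trace bounds would be enough.
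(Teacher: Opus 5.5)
Your proof is correct and follows essentially the paper's route: the paper also expands $G_z^\gamma=\sum_\ell (\lambda_\ell^2-z)^{-1}Y_\ell^\gamma$, gets $Y_{2\ell+1}^\gamma=0$ from the same rotational-invariance argument ($Y_\ell^\gamma=P_\ell(0)Z_\ell^q$), treats the odd eigenvalues as removable singularities, and checks \eqref{Eq:def-Gzgamma-extension} via $\delta_\gamma=\sum_\ell Y_{2\ell}^\gamma$ in $H^{-2}$; your explicit $\ell^{-4}$ convergence bound just makes the extension step more concrete. Two slips, neither load-bearing: in the last step it should read $\ip{G_z^\gamma}{Tw}=\ip{zG_z^\gamma}{w}$, and your side remark about reflection across $\gamma$ is valid only for zonal harmonics, since a general degree-$\ell$ harmonic $Y_\ell^m$ has parity $(-1)^{\ell+m}$ under that reflection (the antipodal map, which also preserves $\delta_\gamma$, is the symmetry that kills every odd degree).
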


\begin{Theorem} \label{Thm:Spectrum-Lapgammatheta}
    Let $\gamma$ be a closed geodesic in $\S[2]$, and for $\theta \in \Theta = \R / (\pi \Z)$, $\theta \neq 0$, let $\Lap_{\gamma, \theta}$ be a self-adjoint perturbation of $\Lap$ on $\S[2]$ by $\delta_{\gamma}$ ($\Lap_{\gamma, \theta} \neq \Lap$, see Proposition \ref{Prop:family-perturbation-Lap-gamma}). For every $z \in \C \setminus \big( \Spec(\Lap) \cup \Spec(\Lap_{\gamma, \theta}) \big)$, there exists $\alpha(\theta, z) \in \C$ such that
    \begin{equation} \label{Eq:resolvent-identity-Lapgammatheta}
        \Big[ (\Lap_{\gamma, \theta} {}- z)^{-1} - (\Lap {}- z)^{-1} \Big] f = \Big[ \alpha(\theta, z) \ip*{G_{\conj{z}}^{\gamma}}{f}[L^2(\S[2])] \Big] G_{z}^{\gamma} \qquad \forall \, f \in L^2(\S[2]) \ ,
    \end{equation}
    thus the resolvent of $\Lap_{\gamma, \theta}$ is compact. Moreover, for $\eta \in \R \setminus \Speceven(\Lap)$ we have
    \[
    \ker(\Lap_{\gamma, \theta} {}- \eta) = \begin{dcases}
        \ker(\Lap {}- \eta) + \Span[\C]{ G_{\eta}^{\gamma} } & \quad \text{if $\eta$ is a solution to \eqref{Eq:equation_eta-newEV},} \\
        \ker(\Lap {}- \eta) & \quad \text{otherwise,}
    \end{dcases}
    \]
    where (see Lemma \ref{Lemma:Gzgamma-ugamma1-dom(Lapgammatheta)} below)
    \begin{equation} \label{Eq:equation_eta-newEV}
        \delta_{\gamma} \Big( G_{\eta}^{\gamma} - \frac{1}{2\pi} u_{\gamma 1} \Big) = \frac{1}{2\pi} \cotan \theta \ .
    \end{equation}
    Meanwhile, for every $\ell \in \N$,
    \[
    \ker(\Lap_{\gamma, \theta} {}- \lambda_{2\ell}^{2} ) = \{ u \in \ker(\Lap {}- \lambda_{2\ell}^{2}) \colon \delta_{\gamma} u = 0 \} \ .
    \]
\end{Theorem}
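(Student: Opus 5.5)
The plan is a Krein-type resolvent formula derived from the von Neumann description of $\Lap_{\gamma,\theta}$ in Proposition \ref{Prop:family-perturbation-Lap-gamma}, combined with Proposition \ref{Prop:Existence-Uniqueness-Gzgamma}. First I would record Lemma \ref{Lemma:Gzgamma-ugamma1-dom(Lapgammatheta)}: for $z\notin\Speceven(\Lap)$, $G_z^\gamma-\frac{1}{2\pi}u_{\gamma1}\in\dom(\Lap)$. Indeed, by Proposition \ref{Prop:computation-dom(T*)} and \eqref{Eq:def-Gzgamma-extension}, both $G_z^\gamma$ and $\frac1{2\pi}u_{\gamma1}$ satisfy $\ip{\cdot}{\Lap w}=\frac{1}{2\pi}\delta_\gamma w\cdot 2\pi$ modulo $L^2$ terms, so their difference $D$ satisfies $\abs{\ip{D}{\Lap w}}\le C\norm{w}$ for all $w\in H^2$, hence $D\in\dom(\Lap^*)=\dom(\Lap)$. (The normalisation constant $2\pi$ should be checked against the computation $\ip{u_{\gamma1}}{\Lap w}=\delta_\gamma w+\dots$; I will fix it so that the singular parts agree.) Writing $G_z^\gamma=D+\frac1{2\pi}u_{\gamma1}$ with $D=d_0+\delta_\gamma(D)u_{\gamma0}$, $d_0\in\dom(T)$, I get that $G_z^\gamma\in\dom(\Lap_{\gamma,\theta})$ if and only if the coefficients $(\delta_\gamma D,\frac1{2\pi})$ are proportional to $(\cos\theta,\sin\theta)$, i.e.\ exactly when \eqref{Eq:equation_eta-newEV} holds.

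For the resolvent formula, I fix $z\notin\Spec(\Lap)\cup\Spec(\Lap_{\gamma,\theta})$ and $f\in L^2$, and set $v=(\Lap-z)^{-1}f+c\,G_z^\gamma$. By \eqref{Eq:T*Gzgamma}, $(T^*-z)v=f$ for every $c$. I then choose $c$ so that $v\in\dom(\Lap_{\gamma,\theta})$. Using the decomposition above, this is one linear condition, namely $\sin\theta\,\delta_\gamma\big((\Lap-z)^{-1}f+cD\big)=\cos\theta\,\frac{c}{2\pi}$. Since $\delta_\gamma(\Lap-z)^{-1}f=\ip{G_{\conj z}^\gamma}{f}$ by \eqref{Eq:def-Gzgamma}, this gives $c=\alpha(\theta,z)\ip{G_{\conj z}^\gamma}{f}$ with $\alpha=\big(\frac{\cotan\theta}{2\pi}-\delta_\gamma(G_z^\gamma-\frac1{2\pi}u_{\gamma1})\big)^{-1}$. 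The denominator vanishing would produce an eigenfunction $G_z^\gamma$ with nonreal or nonspectral $z$, which contradicts the choice of $z$. Uniqueness follows from the injectivity of $\Lap_{\gamma,\theta}-z$, so $v=(\Lap_{\gamma,\theta}-z)^{-1}f$. Compactness is then immediate, since the difference of the two resolvents is rank one.

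For the kernels with $\eta\in\R\setminus\Speceven(\Lap)$, I take $u\in\ker(\Lap_{\gamma,\theta}-\eta)$ and write $u=u_0+c(\cos\theta u_{\gamma0}+\sin\theta u_{\gamma1})$. Then $w=u-2\pi c\sin\theta\,G_\eta^\gamma$ lies in $\dom(\Lap)$, because the $u_{\gamma1}$ components cancel, and $(T^*-\eta)w=0$. Since $T^*$ restricted to $\dom(\Lap)$ is $\Lap$, it follows that $w\in\ker(\Lap-\eta)$. If $c\sin\theta\neq0$, then $G_\eta^\gamma=(u-w)/(\text{const})$; this is in $\dom(\Lap_{\gamma,\theta})$ provided $w$ is, which holds because $w\in\ker(\Lap-\eta)$ and $\delta_\gamma w=0$ automatically. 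The last claim follows by pairing with $G_{\bar\eta}$: by \eqref{Eq:def-Gzgamma-extension}, $\delta_\gamma w=\ip{G_\eta^\gamma}{(\Lap-\eta)w}=0$. Hence \eqref{Eq:equation_eta-newEV} holds. Conversely, $\ker(\Lap-\eta)\subset\dom(T)$ by the same pairing, and $G_\eta^\gamma$ is an eigenfunction when \eqref{Eq:equation_eta-newEV} holds. The pairing argument is exactly the point where $\eta\notin\Speceven$ is used.

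For $\eta=\lambda_{2\ell}^2$, I argue directly. If $u\in\ker(\Lap_{\gamma,\theta}-\eta)$ with $c\sin\theta\ne0$, I pair $(T^*-\eta)u=0$ against an even zonal harmonic $Z$ with respect to $\gamma$ (pole orthogonal to $\gamma$), for which $\delta_\gamma Z\neq0$. Using $\ip{u_{\gamma1}}{\Lap Z}=\delta_\gamma Z+2\ip{u_{\gamma1}}{Z}$ yields $c\sin\theta\,\delta_\gamma Z=0$, a contradiction. So $u\in\dom(\Lap)$, and it remains to show $\delta_\gamma u=0$, i.e.\ that $u$ lies in the $\cos\theta u_{\gamma0}$-free part. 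Here I expect the main obstacle to be bookkeeping the $u_{\gamma0}$ coefficient: when $\theta\neq\pi/2$ the coefficient $c\cos\theta$ must also vanish once $c\sin\theta=0$, and for $\theta\neq0$ this forces $c=0$, giving $\delta_\gamma u=\delta_\gamma u_0=0$.
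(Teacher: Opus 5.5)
Your proposal is correct. It differs from the paper mainly in that it never uses Proposition \ref{Prop:structure-EF-T*}.

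The paper routes every part through that spectral-decomposition description of $\ker(T^*-z)$:
\begin{itemize}
\item the difference of resolvents is first placed in $\ker(T^*-z)=\Span[\C]{G_z^\gamma}$, and only then is its coefficient computed from the domain condition;
\item the eigenspaces are obtained as $\ker(T^*-\eta)\cap\dom(\Lap_{\gamma,\theta})$;
\item the case $\eta=\lambda_{2\ell}^2$ is read off from $\ker(T^*-\lambda_{2\ell}^2)=\ker(\Lap-\lambda_{2\ell}^2)$.
\end{itemize}

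You work only with the two-dimensional space $\dom(T^*)/\dom(T)$ and the regularity statement $G_z^\gamma-\frac{1}{2\pi}u_{\gamma1}\in\dom(\Lap)$:
\begin{itemize}
\item you construct the resolvent as $(\Lap-z)^{-1}f+c\,G_z^\gamma$ and conclude by injectivity;
\item you remove the $u_{\gamma1}$-component of an eigenfunction by subtracting $2\pi c\sin\theta\,G_\eta^\gamma$;
\item at $\eta=\lambda_{2\ell}^2$ you replace the spectral argument by a boundary-form identity, namely $\omega(Z^q_{2\ell},u)=0$. There $\delta_\gamma Z^q_{2\ell}\neq0$ plays the role that $Y^\gamma_{2\ell}\neq0$ plays in the paper.
\end{itemize}
Your route is more self-contained and avoids the $H^{-2}$ bookkeeping. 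The paper's route pays once for the full structure of $\ker(T^*-z)$ for every $z$, including the coexistence of $\ker(\Lap-\eta)$ and $G^\gamma_\eta$ at odd eigenvalues. After that, each claim becomes a one-line intersection.

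Two small remarks.

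First, your suspicion about the normalization is justified. The correct identity is $\ip{u_{\gamma1}}{\Lap w}=2\pi\,\delta_\gamma w+\ip{2u_{\gamma1}}{w}$. You can check this by taking $w=u_{\gamma0}$ and comparing with $\omega(u_{\gamma1},u_{\gamma0})=2\pi$. This $2\pi$ is exactly what makes the constant $\frac{1}{2\pi}$ appear in the regularity lemma. With it, your formula $\alpha(\theta,z)=\big(\frac{1}{2\pi}\cotan\theta-\delta_\gamma(G_z^\gamma-\frac{1}{2\pi}u_{\gamma1})\big)^{-1}$ is the right one; the factor $\frac{1}{2\pi}$ in front of $\cotan\theta$ is dropped in \eqref{Eq:aux6345}. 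In your even-eigenvalue pairing, the correction only changes $c\sin\theta\,\delta_\gamma Z=0$ into $2\pi c\sin\theta\,\delta_\gamma Z=0$.

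Second, the ``obstacle'' you anticipate at the end is not one. From $c\sin\theta=0$ and $\sin\theta\neq0$ you get $c=0$ directly, so $u=u_0\in\dom(T)$. The reverse inclusion is just $\dom(T)\subset\dom(\Lap_{\gamma,\theta})$.
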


\begin{Remark} \label{Rmk:EV-equation}
    For every $\ell \geq 1$ and $\theta \in (0, \pi)$, equation \eqref{Eq:equation_eta-newEV} has a unique solution $\eta_{\ell, \theta} \in (\lambda_{2\ell}^2, \lambda_{2\ell + 2}^2)$. See Lemma \ref{Lemma:EV-equation} at the end of the section.
\end{Remark}

We postpone the proof of Theorem \ref{Thm:Spectrum-Lapgammatheta} to the end of the section; we first state and prove some auxiliary results. The first connects $\delta_{\gamma}$ to the eigenspaces of $\Lap$, revealing the reason why $\mathcal{G}_{z}^{\gamma}$ \eqref{Eq:def-holomap-calGgamma} can be extended to $\C \setminus \Speceven(\Lap)$.

For every $\ell \in \N$, let $Y_{\ell}^{\gamma}$ be the unique eigenfunction in $\ker(\sqLap - \lambda_{\ell})$ such that
\begin{equation} \label{Eq:def-Yellgamma}
        \ip{Y_{\ell}^{\gamma}}{u}[L^2(\S[2])] = \delta_{\gamma} u \qquad \forall \, u \in \ker(\sqLap - \lambda_{\ell}) \ .
\end{equation}
Existence and uniqueness of $Y_{\ell}^{\gamma}$ is granted by the Riesz representation theorem in the finite dimensional space $\ker(\sqLap - \lambda_{\ell})$. These eigenfunctions $Y_{\ell}^{\gamma}$ play the same role as zonal harmonics on $\S[2]$ for the distribution $\delta_{\gamma}$ instead of $\delta_{q}$. The $\ell$-th zonal harmonic on $q \in \S[2]$, $Z_{\ell}^{q}$, is defined as the unique eigenfunction $Z_{\ell}^{q} \in \ker(\sqLap - \lambda_{\ell})$ such that
\begin{equation} \label{Eq:def-Zellq}
    \ip{Z_{\ell}^{q}}{u}[L^2(\S[2])] = u(q) \qquad \forall \, u \in \ker(\sqLap {}- \lambda_{\ell}) \ .
\end{equation}
In fact, if $d(\gamma, q) = \frac{\pi}{2}$, eigenfunctions $Z_{\ell}^{q}$ and $Y_{\ell}^{\gamma}$ are related to each other.

\begin{Lemma} \label{Lemma:relation_Yellgamma-Zellq}
    Let $\gamma$ be a closed geodesic in $\S[2]$, and take $q \in \S[2]$ such that $d(q, \gamma) = \frac{\pi}{2}$. For every $\ell \in \N$
    \[
    \begin{dcases}
    Y_{2\ell}^{\gamma} = (-1)^{\ell} \frac{\Gamma(\ell + \frac{1}{2})}{\Gamma(\frac{1}{2}) \Gamma(\ell + 1)} Z_{2\ell}^{q} \ , \\
    Y_{2\ell + 1}^{\gamma} = 0 \ .
    \end{dcases}
    \]
\end{Lemma}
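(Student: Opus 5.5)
Set up coordinates so that $q$ is the north pole and $\gamma$ is the equator $\{\vartheta = \pi/2\}$, where $\vartheta = d(x,q)$ is the polar angle. The plan is to use rotational symmetry to force $Y_{\ell}^{\gamma}$ to be a multiple of $Z_{\ell}^{q}$, and then fix the constant by evaluating both sides at the pole.

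\emph{Step 1: $Y_\ell^\gamma$ is zonal.} The distribution $\delta_{\gamma}$ is invariant under every rotation $R$ about the axis through $q$, since $\delta_\gamma(u\circ R)=\delta_\gamma u$ (the arclength measure on $\gamma$ is rotation invariant). These rotations also preserve $\ker(\sqLap-\lambda_\ell)$ and the $L^2$ inner product. So $Y_\ell^\gamma\circ R$ satisfies the defining identity \eqref{Eq:def-Yellgamma}, and by uniqueness $Y_\ell^\gamma\circ R=Y_\ell^\gamma$. The subspace of $\ker(\sqLap-\lambda_\ell)$ invariant under rotations about $q$ is one-dimensional and spanned by $Z_\ell^q$. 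Hence $Y_\ell^\gamma = c_\ell Z_\ell^q$ for some $c_\ell\in\C$.

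\emph{Step 2: identify $c_\ell$.} Test \eqref{Eq:def-Yellgamma} against $u=Z_\ell^q$ and use \eqref{Eq:def-Zellq}:
\[
c_\ell \, Z_\ell^q(q) = \overline{c_\ell}^{\,-1}\cdots
\]
More cleanly, take the inner product $\ip{Y_\ell^\gamma}{Z_\ell^q}$ in two ways. By \eqref{Eq:def-Yellgamma} it equals $\delta_\gamma Z_\ell^q$. Writing $Y_\ell^\gamma=c_\ell Z_\ell^q$, it also equals $\overline{c_\ell}\,\ip{Z_\ell^q}{Z_\ell^q}=\overline{c_\ell}\,Z_\ell^q(q)$. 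Recall the standard formula $Z_\ell^q(x)=\frac{2\ell+1}{4\pi}P_\ell(\cos d(x,q))$, where $P_\ell$ is the Legendre polynomial; this follows from the addition theorem. On $\gamma$ we have $\cos d(x,q)=0$, so $\delta_\gamma Z_\ell^q = \frac{2\ell+1}{4\pi}P_\ell(0)$, while $Z_\ell^q(q)=\frac{2\ell+1}{4\pi}P_\ell(1)=\frac{2\ell+1}{4\pi}$. Therefore $c_\ell = P_\ell(0)$, which is real.

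\emph{Step 3: evaluate $P_\ell(0)$.} The known values are $P_{2\ell+1}(0)=0$ by parity and $P_{2\ell}(0)=(-1)^\ell\binom{2\ell}{\ell}4^{-\ell}$. Via the duplication formula, the latter equals $(-1)^\ell\frac{\Gamma(\ell+\frac12)}{\Gamma(\frac12)\Gamma(\ell+1)}$. Together these give both cases of the lemma.

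The only delicate point is the normalization of $Z_\ell^q$ in Step 2, namely that $\ip{Z_\ell^q}{Z_\ell^q}=Z_\ell^q(q)$ and that the Legendre form is correctly scaled. The argument avoids needing that scaling explicitly, because the factor $\frac{2\ell+1}{4\pi}$ cancels and only the ratio $P_\ell(0)/P_\ell(1)$ enters.
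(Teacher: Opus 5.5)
Your proof is correct and follows essentially the same route as the paper. The paper shows $Y_\ell^\gamma$ is zonal through the infinitesimal condition $L_q Y_\ell^\gamma = 0$ for the Killing field $L_q$ fixing $q$, while you use invariance under the full rotation group together with uniqueness of the Riesz representative; after that, both arguments fix the constant by computing $\langle Y_\ell^\gamma, Z_\ell^q\rangle$ in two ways to get $P_\ell(0)$ and read off the Legendre values. The only blemish is the aborted display at the start of Step 2 ($c_\ell Z_\ell^q(q) = \overline{c_\ell}^{\,-1}\cdots$), which is meaningless and should be deleted.
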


\begin{proof}
    Let $L_{q}$ be the Killing field on $\S[2]$ such that $L_{q}(q) = 0$ and $L_{q}(\gamma(s)) = \gamma'(s)$. The subspace $\Span[\C]{Z_{\ell}^{q}}$ is characterized in $L^2(\S[2])$ by the properties
    \begin{equation} \label{Eq:char-eq-spanZellq}
        [ \sqLap {}- \lambda_\ell ] u = 0 \ , \qquad L_{q} u = 0 \ ,
    \end{equation}
    that is, $u \in \Span[\C]{Z_{\ell}^{q}}$ if and only if $u \in L^2(\S[2])$ satisfies \eqref{Eq:char-eq-spanZellq}. We will show that $Y_{\ell}^{\gamma}$ enjoys \eqref{Eq:char-eq-spanZellq} and then find the appropriate multiplicative constant.

    Fix some $\ell \in \N$. $Y_{\ell}^{\gamma}$ enjoys property (1) by definition. $Y_{\ell}^{\gamma}$ satisfies property (2) due to the fact $[\Lap, L_q] = 0$. If $\Pi_\ell$ is the eigenprojector onto $\ker(\sqLap - \lambda_\ell)$ and $u \in \Cinf(\S[2])$,
    \[
    \ip{L_{q} Y_{\ell}^{\gamma}}{u}[L^2(\S[2])] = -\ip{Y_{\ell}^{\gamma}}{L_{q} u}[L^2(\S[2])] = -\ip{Y_{\ell}^{\gamma}}{ \Pi_\ell L_{q} u}[L^2(\S[2])] = -\ip{Y_{\ell}^{\gamma}}{L_{q} \Pi_\ell u}[L^2(\S[2])] \ .
    \]
    Thus, we may assume without loss of generality that $u \in \ker(\sqLap - \lambda_{\ell})$. Now, for $u \in \ker(\sqLap - \lambda_{\ell})$,
    \[
    \ip{Y_{\ell}^{\gamma}}{L_{q} u}[L^2(\S[2])] = \delta_{\gamma} (L_{q} u) = \int_{0}^{2\pi} [L_{q} u] (\gamma (s)) \frac{\D{s}}{2\pi} = \int_{0}^{2\pi} [u \circ \gamma]'(s) \frac{\D{s}}{2\pi} = 0 \ ,
    \]
    thus $L_{q} Y_{\ell}^{\gamma} = 0$
    
    Since $Y_{\ell}^{\gamma}$ satisfies properties (1) and (2), there exists $\alpha_{\ell} \in \C$ such that 
    \begin{equation} \label{Eq:identity-Yellgamma-Zellq}
        Y_{\ell}^{\gamma} = \alpha_{\ell} Z_{\ell}^{q} \ .
    \end{equation}
    If we know how to compute $\ip{Y_{\ell}^{\gamma}}{Z_{\ell}^{q}}[L^2(\S[2])]$ in two different ways, we will find $\alpha_{\ell}$. For that, we recall that
    \[
    Z_{\ell}^{q} (x) = \frac{2\ell + 1}{\vol(\S[2])} P_{\ell}(\cos r) \ , \qquad r = d(x, q) \ ,
    \]
    where $\{P_{\ell}\}_{\ell \in \N}$ is the family of Legendre polynomials with the normalization $P_{\ell}(1) = 1$ (see \cite[Proposition B.1]{Verdasco2026spheres}, for instance). Note that for every $s \in \R$,
    \[
    Z_{\ell}^{q} (\gamma(s)) = \frac{2\ell + 1}{\vol(\S[2])} P_{\ell}(0) ,
    \]
    therefore
    \[
    \ip{Y_{\ell}^{\gamma}}{Z_{\ell}^{q}}[L^2(\S[2])] = \int_{0}^{2\pi} Z_{\ell}^{q} (\gamma(s)) \frac{\D{s}}{2\pi} = \frac{2\ell + 1}{\vol(\S[2])} P_{\ell}(0) \ .
    \]
    Meanwhile
    \[
    \ip{Y_{\ell}^{\gamma}}{Z_{\ell}^{q}}[L^2(\S[2])] = \conj{\alpha_{\ell}} \ip{Z_{\ell}^{q}}{Z_{\ell}^{q}}[L^2(\S[2])] = \conj{\alpha_{\ell}} \frac{2\ell + 1}{\vol(\S[2])} P_{\ell}(1) \ ,
    \]
    thus
    \begin{equation} \label{Eq:values-alphaell}
        \alpha_{\ell} = P_{\ell}(0) \ .
    \end{equation}
    Values of Legendre polynomials at $0$ are well-known \cite[Table 18.6.1]{NISTHandbook2010}
    \begin{equation} \label{Eq:values-legendrepoly_zero}
        \begin{dcases} P_{2\ell} (0) = (-1)^{\ell} \frac{\Gamma(\ell + \frac{1}{2})}{\Gamma(\frac{1}{2}) \Gamma(\ell + 1)} \\
        P_{2\ell + 1} (0) = 0 \end{dcases} \ .
    \end{equation}
    Combining \eqref{Eq:identity-Yellgamma-Zellq}, \eqref{Eq:values-alphaell}, and \eqref{Eq:values-legendrepoly_zero}, the result is concluded.
\end{proof}

\begin{proof}[Proof of Proposition \texorpdfstring{\ref{Prop:Existence-Uniqueness-Gzgamma}}{exist-unique-Gzgamma}]
    Recall that the map
    \[
    G^{\gamma} \colon \C \setminus \Speceven(\Lap) \to L^2(\S[2]) \ , \qquad G^{\gamma}(z) = G_{z}^{\gamma}
    \]
    is holomorphic on $\C \setminus \Spec(\Lap)$ because the resolvent $(\Lap {}- z)^{-1}$ is holomorphic on $z \in \C \setminus \Spec(\Lap)$ and $\delta_{\gamma}$ is linear and continuous on $H^{2}(\S[2])$.
    
    Let $z \in \C \setminus \Spec(\Lap)$ and write $G_{z}^{\gamma}$ as a linear combination of eigenfunctions of $\Lap$. Thanks to the defining property \eqref{Eq:def-Gzgamma}, one finds that
    \begin{equation} \label{Eq:EF-expansion-Gzgamma}
        G^{\gamma}(z) = G_{z}^{\gamma} = \sum_{\ell \in \N} \frac{1}{\lambda_{\ell}^2 - z} Y_{\ell}^{\gamma} \ .
    \end{equation}
    We observe now that $Y_{2\ell + 1}^{\gamma} = 0$ for all $\ell \in \N$, thanks to Lemma \ref{Lemma:relation_Yellgamma-Zellq}; therefore, every isolated singularity $z = \lambda_{2 \ell + 1}^{2}$ is a removable singularity, and thus $G^{\gamma}$ extends holomorphically to $\C \setminus \Speceven(\Lap)$ following the formula \eqref{Eq:EF-expansion-Gzgamma}. With formula \eqref{Eq:EF-expansion-Gzgamma} at hand, it is very easy to prove that \eqref{Eq:def-Gzgamma-extension} holds for all $z \in \C \setminus \Speceven(\Lap)$, since
    \[
    \delta_{\gamma} = \sum_{\ell \in \N} Y_{2\ell}^{\gamma} \qquad \text{in $H^{-2}(\S[2])$.} \qedhere
    \]
\end{proof}

The following proposition characterizes the eigenfunctions of the adjoint operator $T^*$, where $T$ was defined back in \eqref{Eq:def-op_T}. Recall the self-adjoint perturbations of $\Lap$ by $\delta_{\gamma}$ are restrictions of the operator $T^*$.

\begin{Prop} \label{Prop:structure-EF-T*}
    Let $\gamma$ be a closed geodesic on $\S[2]$, and let $T \coloneqq \Lap|_{\dom(T)}$ where $\dom(T)$ was defined back in \eqref{Eq:def-op_T}. For every $z \in \C$,
    \begin{equation} \label{Eq:EF-T*}
        \ker(T^* {}- z) = \begin{dcases}
            \ker(\Lap {}- z) + \Span[\C]{G_{z}^{\gamma}}  & \quad \text{if $z \in \C \setminus \Speceven(\Lap)$} \\
            \ker(\Lap {}- z) & \quad \text{if $z \in \Speceven(\Lap)$}
        \end{dcases}
    \end{equation}
\end{Prop}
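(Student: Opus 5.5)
We characterize $\dom(T^*)$ by duality: $u \in \dom(T^*)$ with $T^*u = zu$ means exactly that $\ip{u}{(\Lap - \conj{z}) w}[L^2(\S[2])] = 0$ for all $w \in \dom(T)$. The functional $w \mapsto \ip{u}{(\Lap - \conj z)w}$ is continuous on $H^2(\S[2])$ and vanishes on the kernel of the functional $\delta_{\gamma}$, which is continuous on $H^2$. Since $\dom(T) = \ker \delta_{\gamma}$ has codimension one in $H^2(\S[2])$, there exists $c \in \C$ such that
\begin{equation*}
\ip{u}{(\Lap - \conj{z}) w}[L^2(\S[2])] = \conj{c}\, \delta_{\gamma} w \qquad \forall\, w \in H^2(\S[2]) \ .
\end{equation*}
So $\ker(T^*-z)$ is the set of $u \in L^2$ solving the distributional equation $(\Lap - z)u = c\,\delta_\gamma$ for some $c$ (up to conjugation conventions, since $\delta_\gamma$ is real). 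Conversely, any such $u$ lies in $\dom(T^*)$, by the same identity restricted to $\dom(T)$. The proof therefore reduces to solving $(\Lap - z)u = c\,\delta_{\gamma}$ in $L^2$.

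We expand $u = \sum_\ell \Pi_\ell u$ and test against $w \in \ker(\sqLap - \lambda_\ell)$. Using \eqref{Eq:def-Yellgamma}, this gives $(\lambda_\ell^2 - z)\Pi_\ell u = c\, Y_\ell^\gamma$ for every $\ell$. By Lemma \ref{Lemma:relation_Yellgamma-Zellq}, $Y_{2\ell+1}^\gamma = 0$ and $Y_{2\ell}^\gamma \neq 0$, since $P_{2\ell}(0)\neq 0$.

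\emph{Case $z \notin \Speceven(\Lap)$.} For $\ell$ with $\lambda_\ell^2 \neq z$ we get $\Pi_\ell u = c\,Y_\ell^\gamma/(\lambda_\ell^2 - z)$. If $z = \lambda_{2k+1}^2$, the equation at level $2k+1$ reads $0=0$, so $\Pi_{2k+1}u$ is an arbitrary element of $\ker(\Lap - z)$. Comparing with \eqref{Eq:EF-expansion-Gzgamma}, in which the odd terms vanish, gives $u - c\,G_z^\gamma \in \ker(\Lap - z)$. For the reverse inclusion, $\ker(\Lap - z) \subset \ker(T^*-z)$ because $T \subset \Lap$ implies $\Lap = \Lap^* \subset T^*$, and $G_z^\gamma \in \ker(T^* - z)$ by \eqref{Eq:T*Gzgamma}. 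Nothing more is needed here, since $G_z^\gamma \in L^2$ is provided by Proposition \ref{Prop:Existence-Uniqueness-Gzgamma}.

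\emph{Case $z = \lambda_{2k}^2$.} The equation at level $2k$ gives $0 = c\,Y_{2k}^\gamma$, and $Y_{2k}^\gamma \neq 0$, so $c = 0$. Then $(\Lap - z)u = 0$ distributionally, and elliptic regularity puts $u \in H^2$, hence $u \in \ker(\Lap - z)$. The reverse inclusion holds as before.

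The only delicate point is the first step: justifying the existence of $c$. This requires the continuity of $w \mapsto \ip{u}{(\Lap-\conj z)w}$ on $H^2$, which is immediate since $u \in L^2$, together with the codimension-one linear algebra: a functional on $H^2$ that vanishes on $\ker\delta_\gamma$ is a multiple of $\delta_\gamma$. Everything else is coefficient matching in the spherical harmonic expansion.
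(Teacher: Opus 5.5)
Your proposal is correct and follows essentially the same route as the paper. Both arguments reduce membership in $\ker(T^*-z)$ to the statement that the functional $w\mapsto \ip{u}{(\Lap-\conj{z})w}[L^2(\S[2])]$, which vanishes on $\dom(T)=\ker\delta_\gamma$, is a multiple of $\delta_\gamma$. Both then match spectral coefficients to get $(\lambda_\ell^2-z)\Pi_\ell u = c\,Y_\ell^\gamma$, and split into cases using $Y_{2\ell+1}^\gamma=0\neq Y_{2\ell}^\gamma$.

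The differences are cosmetic. You phrase the annihilator step directly on $H^2(\S[2])$ rather than through the paper's $H^{-2}$ spectral sum, and you spell out the reverse inclusions, which the paper leaves implicit. Your appeal to elliptic regularity in the case $z=\lambda_{2k}^2$ is unnecessary: with $c=0$, the coefficient identities already force $u=\Pi_{2k}u$.
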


\begin{proof}
    We will use the spectral decomposition given by $\Lap$: given $u \in H^{s}(\S[2])$, there exists $(u_{\ell})_{\ell \in \N}$ such that $u_{\ell} \in \ker(\Lap {}- \lambda_{\ell}^2)$ $\forall \, \ell \in \N$, 
    \begin{equation} \label{Eq[Prop:structure-EF-T*]:aux1}
        \Big( (1 + \lambda_{\ell}^{2})^{\frac{s}{2}} \norm{u_{\ell}}[L^2(\S[2])] \Big)_{\ell \in \N} \in \ell^2(\N) \qquad \text{and} \qquad u = \sum_{\ell \in \N} u_{\ell} \quad \text{in the $H^s$ topology.}
    \end{equation}
    For example, the spectral decomposition of $\delta_{\gamma}$ is
    \[
    \delta_{\gamma} = \sum_{\ell \in \N} Y_{\ell}^{\gamma} = \sum_{\ell \in \N} Y_{2 \ell}^{\gamma} \ ,
    \]
    but we can also use it to characterize the subspace $\dom(T)$: $v \in \dom(T)$ if and only if its spectral decomposition $(v_{\ell})_{\ell \in \N}$ satisfies \eqref{Eq[Prop:structure-EF-T*]:aux1} for $s = 2$ and also
    \begin{equation} \label{Eq[Prop:structure-EF-T*]:aux2}
        \sum_{\ell \in \N} \ip*{Y_{2\ell}^{\gamma}}{v_{2\ell}}[L^2(\S[2])] = 0 \ .
    \end{equation}

    By definition, $(T^* - z) u_{z} = 0$ if and only if $u_{z} \in L^2(\S[2])$ and $\ip{u_{z}}{(\Lap {}- \conj{z}) v}[L^2(M)] = 0$ for all $v \in \dom(T)$. Translating this into the notation above, there exists $(u_{z, \ell})_{\ell \in \N}$ such that \eqref{Eq[Prop:structure-EF-T*]:aux1} holds for $s = 0$ and
    \[
    \sum_{\ell \in \N} \ip{u_{z, \ell}}{ (\lambda_{\ell}^2 - \conj{z}) v_{\ell} }[L^2(\S[2])] = 0 \qquad \forall \, v \in \dom(T) \ .
    \]
    From this identity we infer that $f \coloneqq \sum_{\ell \in \N} (\lambda_{\ell}^2 - z) u_{z, \ell} \in H^{-2}(\S[2])$ and $f(v) = 0$ for all $v \in \dom(T) = \{v \in H^2(\S[2]) \colon \delta_{\gamma} v = 0\}$. Therefore
    \[
    \sum_{\ell \in \N} (\lambda_{2\ell}^2 - z) u_{z, 2\ell} \in \Span[\C]{ \delta_{\gamma} } \ ,
    \]
    that is, there exists $\beta \in \C$ such that
    \[
    (\lambda_{\ell}^2 - z) u_{z, \ell} = \beta Y_{\ell}^{\gamma} \qquad \forall \, \ell \in \N \ .
    \]

    If $z \in \Speceven(\Lap)$, say $z = \lambda_{2k}^2$, then for $\ell = 2k$ we have the equation
    \[
    0 = (\lambda_{2k}^2 - z) u_{z, 2k} = \beta Y_{2k}^{\gamma} \ ,
    \]
    from which we read that $\beta = 0$, but $u_{z, 2k}$ can be chosen freely. Therefore,
    \[
    u_{z} = w_{z}
    \]
    for some $w_{z} \in \ker(\Lap {}- z)$ arbitrary.

    If $z \in \Specodd(\Lap)$, say $z = \lambda_{2k + 1}^2$, the equation for $\ell = 2 k + 1$
    \[
    0 = (\lambda_{2k+1}^2 - z) u_{z, 2k+1} = \beta Y_{2k+1}^{\gamma} = 0 \ ,
    \]
    imposes no restriction on $\beta$ nor on $u_{z, 2k+1} \in \ker(\Lap {}- \lambda_{2k+1}^2)$. Nonetheless, from the equations for $\ell \neq 2k+1$ we find that
    \[
    u_{z, \ell} = \frac{\beta}{\lambda_{\ell}^2 - z} Y_{\ell}^{\gamma} \ ,
    \]
    therefore
    \[
    u_{z} = w_{z} + \beta G_{z}^{\gamma} \ ,
    \]
    for some $w_z \in \ker(\Lap {}- z)$ arbitrary.

    Lastly, if $z \in \C \setminus \Spec(\Lap)$, then
    \[
    u_{z, \ell} = \frac{\beta}{\lambda_{\ell}^2 - z} Y_{\ell}^{\gamma} \qquad \forall \, \ell \in \N \ ,
    \]
    and thus,
    \[
    u_{z} = \beta G_{z}^{\gamma} \ . \qedhere
    \]
\end{proof}

Next we prove a simple but useful connection between the functions $G_{z}^{\gamma}$ for $z \in \C \setminus \Speceven(\Lap)$ and $u_{\gamma 1}$.

\begin{Lemma} \label{Lemma:Gzgamma-ugamma1-dom(Lapgammatheta)}
    Let $z \in \C \setminus \Speceven(\Lap)$. Then
    \begin{equation} \label{Eq:Gzgamma-ugamma1_in_dom(Lap)}
        G_{z}^{\gamma} - c u_{\gamma 1} \in \dom(\Lap) \quad \iff \quad c = \frac{1}{2\pi} \ .
    \end{equation}
    Therefore, $G_{z}^{\gamma} \in \dom(\Lap_{\gamma, \theta})$ if and only if
    \begin{equation} \label{Eq:char-Gzgamma-in-dom(Lapgammatheta)}
        \delta_{\gamma}\Big( G_{z}^{\gamma} - \frac{1}{2\pi} u_{\gamma 1} \Big) = \frac{1}{2\pi} \cotan \theta \ .
    \end{equation}
\end{Lemma}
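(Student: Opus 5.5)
The plan has two parts: first identify the singular part of $G_{z}^{\gamma}$, then read off the domain condition from the Lagrangian description of $\dom(\Lap_{\gamma,\theta})$.

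\emph{First part.} By Proposition \ref{Prop:Existence-Uniqueness-Gzgamma}, $G_{z}^{\gamma}\in\dom(T^*)$ with $T^*G_{z}^{\gamma}=zG_{z}^{\gamma}$. Proposition \ref{Prop:computation-dom(T*)} then gives a decomposition
\[
G_{z}^{\gamma}=w+a\,u_{\gamma 0}+c_0\,u_{\gamma 1},\qquad w\in\dom(T),
\]
which is unique since $\dim(\dom(T^*)/\dom(T))=2$. Because $\dom(\Lap)=\dom(T)+\Span[\C]{u_{\gamma 0}}$ and $u_{\gamma1}\notin\dom(\Lap)$ by \eqref{Eq:dom(Lap)=dom(T)+Span(ugamma0)}, we have $G_{z}^{\gamma}-c\,u_{\gamma1}\in\dom(\Lap)$ if and only if $c=c_0$. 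It remains to compute $c_0$.

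\emph{Computing $c_0$.} Pair $G_{z}^{\gamma}$ against $(\Lap-\conj z)u_{\gamma0}$ with $u_{\gamma0}=1\in\dom(\Lap)$. The defining identity \eqref{Eq:def-Gzgamma-extension} gives
\[
\ip{G_{z}^{\gamma}}{(\Lap-\conj z)1}=\delta_\gamma 1=1.
\]
On the other hand, since $T^*G_{z}^{\gamma}=zG_{z}^{\gamma}$ and $\Lap\supset T$, this pairing equals $\ip{G_{z}^{\gamma}}{T^*1}-\ip{T^*G_{z}^{\gamma}}{1}=\omega(G_{z}^{\gamma},u_{\gamma0})$, in the notation of \eqref{Eq:def-omega-sec2}. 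Now $\omega$ vanishes whenever one argument lies in $\dom(T)$, and by \eqref{Eq:omega(ugamma01)} the only nonzero pairing is $\omega(u_{\gamma1},u_{\gamma0})=2\pi$. Hence $\omega(G_{z}^{\gamma},u_{\gamma0})=\conj{c_0}\,2\pi$. Equating the two expressions gives $\conj{c_0}=1/(2\pi)$, so $c_0=1/(2\pi)$.

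For a sanity check, one could instead integrate by parts directly, using the jump of the normal derivative of $u_{\gamma1}$ across $\gamma$, exactly as in the proof of Proposition \ref{Prop:computation-dom(T*)}. Keeping track of complex conjugations in $\omega$ is the only delicate point here.

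\emph{Second part.} Write $G_{z}^{\gamma}=v+\frac1{2\pi}u_{\gamma1}$ with $v\in\dom(\Lap)$, and set $\kappa:=\delta_\gamma v$. Since $\delta_\gamma u_{\gamma0}=1$, we have $v-\kappa u_{\gamma0}\in\dom(T)$, and therefore
\[
G_{z}^{\gamma}\equiv \kappa\,u_{\gamma0}+\tfrac1{2\pi}u_{\gamma1}\pmod{\dom(T)}.
\]
By \eqref{Eq:dom(Lapgammatheta)} and the uniqueness of the decomposition modulo $\dom(T)$, $G_{z}^{\gamma}\in\dom(\Lap_{\gamma,\theta})$ if and only if $(\kappa,\tfrac1{2\pi})$ is proportional to $(\cos\theta,\sin\theta)$. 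For $\theta\neq0$ this is equivalent to $\kappa=\frac1{2\pi}\cotan\theta$, which is \eqref{Eq:char-Gzgamma-in-dom(Lapgammatheta)}.

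For $\theta=0$ the proportionality condition is never met, because the $u_{\gamma1}$-coefficient $\tfrac{1}{2\pi}$ is nonzero; this is consistent with reading $\cotan 0=\infty$. The main obstacle is the computation of $c_0$ in the first part; the rest is linear algebra in the two-dimensional quotient $\dom(T^*)/\dom(T)$.
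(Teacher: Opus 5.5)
Your proposal is correct and is essentially the paper's argument. The paper also pairs against $u_{\gamma 0}$ through $\omega$: it combines $\omega(u_{\gamma 1},u_{\gamma 0})=2\pi$ with $\omega(u_{\gamma 0},G_{z}^{\gamma})=-1$, which comes from the defining identity of $G_{z}^{\gamma}$. The only difference is phrasing: the paper packages this as the criterion $w\in\dom(\Lap)\iff\omega(u_{\gamma 0},w)=0$, applied to $G_{z}^{\gamma}-c\,u_{\gamma 1}$, where you extract the $u_{\gamma 1}$-coefficient directly. The $\cotan\theta$ condition then follows by the same linear algebra modulo $\dom(T)$; your version states the converse direction and the case $\theta=0$ a bit more explicitly.
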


\begin{proof}
    We use the following characterization of $\dom(\Lap)$: if $w \in \dom(T^*)$,
    \[
    w \in \dom(\Lap) \quad \iff \quad \omega(u_{\gamma 0}, w) = 0 \ .
    \]
    Let us first prove this. Fix $w \in \dom(T^*)$. Proposition \ref{Prop:computation-dom(T*)} says there exist $w_0 \in \dom(T)$ and $a, b \in \C$ such that
    \[
    w = w_0 + a u_{\gamma 0} + b u_{\gamma 1} \ .
    \]
    Thanks to \eqref{Eq:omega(ugamma01)} and Lemma \ref{Lemma:characterization_dom(T)-via_omega} we see
    \[
    \omega(u_{\gamma 0}, w) = -2\pi b \ .
    \]
    Therefore, $\omega(u_{\gamma 0}, w) = 0$ if and only if $b = 0$, which is equivalent to $w \in \dom(\Lap)$ due to \eqref{Eq:dom(Lap)=dom(T)+Span(ugamma0)}.

    Let us prove now that
    \begin{equation} \label{Eq:aux146}
        \omega \Big( u_{\gamma 0}, G_{z}^{\gamma} - c u_{\gamma 1} \Big) = 0 \quad \iff \quad c = \frac{1}{2\pi} \ .
    \end{equation}
    For that, using the definition of $\omega$ \eqref{Eq:def-omega-sec2}, $u_{\gamma 0} \in \dom(\Lap)$, $T^* G_{z}^{\gamma} = z G_{z}^{\gamma}$ (see Proposition \ref{Prop:structure-EF-T*}), and \eqref{Eq:def-Gzgamma-extension},
    \[
    \omega(u_{\gamma 0}, G_{z}^{\gamma}) = \ip{u_{\gamma 0}}{ z G_{z}^{\gamma}}[L^2(\S[2])] - \ip{\Lap u_{\gamma 0}}{ G_{z}^{\gamma} }[L^2(\S[2])] = \ip{(\conj{z} - \Lap) u_{\gamma 0}}{G_{z}^{\gamma}}[L^2(\S[2])] = - \conj{ \delta_{\gamma} u_{\gamma 0} } = -1 \ .
    \]
    Meanwhile, we know from \eqref{Eq:omega(ugamma01)} that 
    \[
    \omega(u_{\gamma 0}, u_{\gamma 1}) = -2\pi \ .
    \]
    Combining these two identities, we obtain \eqref{Eq:aux146}.

    Lastly, $G_{z}^{\gamma} \in \dom(\Lap_{\gamma, \theta})$ if and only if there exist $g_0 \in \dom(T)$ and $c \in \C$ such that
    \[
    G_{z}^{\gamma} = g_0 + c \big[ \cos \theta \ u_{\gamma 0} + \sin \theta \ u_{\gamma 1} \big] \ .
    \]
    We rewrite it as follows
    \[
    G_{z}^{\gamma} - c \sin \theta \ u_{\gamma 1} = g_0 + c \cos \theta \ u_{\gamma 0} \in \dom(\Lap) \ .
    \]
    Thanks to \eqref{Eq:Gzgamma-ugamma1_in_dom(Lap)}, we get that $G_{z}^{\gamma} \in \dom(\Lap_{\gamma, \theta})$ if and only if (observe that $\sin \theta \neq 0$)
    \[
    G_{z}^{\gamma} - \frac{1}{2\pi} u_{\gamma 1} = g_0 + \frac{1}{2\pi} \cotan \theta \ u_{\gamma 0} \ ,
    \]
    or as a single equation,
    \[
    \delta_{\gamma} \Big( G_{z}^{\gamma} - \frac{1}{2\pi} u_{\gamma 1} \Big) = \frac{1}{2\pi} \cotan \theta \ . \qedhere
    \]
\end{proof}

\begin{proof}[Proof of Theorem \texorpdfstring{\ref{Thm:Spectrum-Lapgammatheta}}{compact resolvent}]
    Let $z \in \C \setminus (\Spec(\Lap_{\gamma, \theta}) \cup \Spec(\Lap))$ and fix some $f \in L^2(\S[2])$. Since $\Lap$ and $\Lap_{\gamma, \theta}$ are self-adjoint, there exist unique $u \in H^2(\S[2])$ and $w \in \dom(T^*)$ such that
    \[
    (\Lap {}- z) u = f \ , \qquad (\Lap_{\gamma, \theta} {}- z) [u + w] = f \ .
    \]
    This directly implies that
    \begin{equation} \label{Eq:resolvent-difference-f}
        \Big[ (\Lap_{\gamma, \theta} {}- z)^{-1} - (\Lap {}- z)^{-1} \Big] f = w \ .
    \end{equation}
    Since both $\Lap$ and $\Lap_{\gamma, \theta}$ are restrictions of the operator $T^*$, we may apply $(T^* - z)$ on both sides of \eqref{Eq:resolvent-difference-f} and obtain $w \in \ker(T^* - z)$. Now Proposition \ref{Prop:structure-EF-T*} implies that
    \[
    w = \beta G_{z}^{\gamma}
    \]
    for some $\beta \in \C$ depending on $f$. This proves that
    \[
    (\Lap_{\gamma, \theta} {}- z)^{-1} - (\Lap {}- z)^{-1} \in \Span[\C]{G_{z}^{\gamma}}
    \]
    that is, it is a rank-one operator, thus $\Lap_{\gamma, \theta}$ has compact resolvent and $z \in \Spec(\Lap_{\gamma, \theta})$ if and only if $z$ is an eigenvalue of $\Lap_{\gamma, \theta}$.

    To compute $\beta$, we use that $u + w \in \dom(\Lap_{\gamma, \theta})$, which means that there exist $u_0 \in \dom(T)$ and $c \in \C$ such that
    \begin{equation} \label{Eq:u+bGzgamma=u_0thetaugmma01}
        u + \beta G_{z}^{\gamma} = u_0 + c \big[ \cos \theta \ u_{\gamma 0} + \sin \theta \ u_{\gamma 1} \big]
    \end{equation}
    where $u_{\gamma 0}$, $u_{\gamma 1}$ were defined in \eqref{Eq:functions-ugamma01}. Let us rewrite \eqref{Eq:u+bGzgamma=u_0thetaugmma01} as follows,
    \[
    \beta G_{z}^{\gamma} - c \sin \theta \ u_{\gamma 1} = u_0 + c \cos \theta \ u_{\gamma 0} - u \in \dom(\Lap) \ .
    \]
    Due to Lemma \ref{Eq:Gzgamma-ugamma1_in_dom(Lap)}, we get that $c = \frac{\beta}{2 \pi \sin \theta}$ (recall that we assumed $\theta \neq 0$). Therefore, applying $\delta_{\gamma}$ on both sides of \eqref{Eq:u+bGzgamma=u_0thetaugmma01},
    \[
    \delta_{\gamma} u + \beta \delta_{\gamma} \Big( G_{z}^{\gamma} - \frac{1}{2\pi} u_{\gamma 1} \Big) = \frac{\beta}{2\pi} \cotan \theta \ ,
    \]
    and solving for $\beta$,
    \begin{equation} \label{Eq:aux6345}
        \beta = \frac{1}{\cotan \theta - \delta_{\gamma} ( G_{z}^{\gamma} - \frac{1}{2\pi} u_{\gamma 1} )} \delta_{\gamma} u \ ,
    \end{equation}
    where we used that $\cotan \theta - \delta_{\gamma} ( G_{z}^{\gamma} - \frac{1}{2\pi} u_{\gamma 1} )$ because $z \notin \Spec(\Lap_{\gamma, \theta})$ (see Lemma \ref{Lemma:Gzgamma-ugamma1-dom(Lapgammatheta)}). Lastly, using that
    \[
    \delta_{\gamma} u = \ip{G_{\conj{z}}^{\gamma}}{(\Lap {}- z) u}[L^2(\S[2])] = \ip{G_{\conj{z}}^{\gamma}}{f}[L^2(\S[2])] \ ,
    \]
    and combining it with \eqref{Eq:aux6345}, we obtain
    \[
    \beta = \frac{1}{\cotan \theta - \delta_{\gamma} ( G_{z}^{\gamma} - \frac{1}{2\pi} u_{\gamma 1} )} \ip{G_{\conj{z}}^{\gamma}}{f}[L^2(\S[2])] \ ,
    \]
    thus proving identity \eqref{Eq:resolvent-identity-Lapgammatheta}.

    The rest of the identities in Theorem \ref{Thm:Spectrum-Lapgammatheta} are consequences of the more general identity
    \begin{equation} \label{Eq:generalformula-Spec(Lapgammatheta)}
        \ker(\Lap_{\gamma, \theta} {}- \eta) = \ker(T^* - \eta) \cap \dom(\Lap_{\gamma, \theta}) \qquad \forall \, \eta \in \R \ ,
    \end{equation}
    in combination with Proposition \ref{Prop:structure-EF-T*}. Let us start considering $\eta \in \R \setminus \Speceven(\Lap)$, Proposition \ref{Prop:structure-EF-T*} gives
    \[
    \ker(T^* - \eta) = \ker(\Lap - \eta) + \Span[\C]{G_{\eta}^{\gamma}} \ .
    \]
    We have $\delta_{\gamma} w_{\eta} = 0$ for all $w_{\eta} \in \ker(\Lap {}- \eta)$, thus $\ker(\Lap {}- \eta) \subseteq \dom(\Lap_{\gamma, \theta})$. If $\eta$ is not a solution to \eqref{Eq:equation_eta-newEV}, then $G_{\eta}^{\gamma} \notin \dom(\Lap_{\gamma, \theta})$ due to \eqref{Eq:char-Gzgamma-in-dom(Lapgammatheta)} and thus
    \[
    \ker(\Lap_{\gamma, \theta} {}- \eta) = \ker(\Lap {}- \eta)
    \]
    If $\eta$ is a solution to \eqref{Eq:equation_eta-newEV}, then $G_{\eta}^{\gamma} \in \dom(\Lap_{\gamma, \theta})$, therefore
    \[
    \ker(\Lap_{\delta, \gamma} {}- \eta) = \ker(\Lap {}- \eta) + \Span[\C]{G_{z}^{\gamma}} \ .
    \]
    If we consider now $\eta \in \Speceven(\Lap)$, say $\eta = \lambda_{2\ell}^2$, Proposition \ref{Prop:structure-EF-T*} implies that
    \[
    \ker(T^* {}- \eta) = \ker(\Lap - \eta) \ .
    \]
    Since $Y_{2\ell}^{\gamma} \in \ker(\Lap {}- \eta) \setminus \ker(\Lap_{\gamma, \theta} {}- \eta)$, we immediately obtain
    \[
    \ker(\Lap_{\gamma, \theta} {}- \lambda_{2\ell}^2) = \big\{ u \in \ker(\Lap {}- \lambda_{2\ell}^2) \colon \delta_{\gamma} u = 0 \big\} \ . \qedhere
    \]
\end{proof}

\begin{Lemma} \label{Lemma:EV-equation}
    Fix $m \in \N$, and consider the interval $J_{m} \coloneqq (\lambda_{2m - 2}^2, \lambda_{2m}^2)$ if $m \geq 1$, and $J_{0} \coloneqq (- \infty, 0)$ if $m = 0$. The function
    \[
    F \colon J_{m} \to \R \ , \qquad F(\eta) \coloneqq \delta_{\gamma} \Big( G_{\eta}^{\gamma} - \frac{1}{2\pi} u_{\gamma 1} \Big)
    \]
    is strictly increasing, and if $m \geq 1$, it is bijective.
\end{Lemma}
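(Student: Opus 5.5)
The plan is to write $F$ explicitly through the eigenfunction expansion \eqref{Eq:EF-expansion-Gzgamma}. Set $H_\eta \coloneqq G_{\eta}^{\gamma} - \frac{1}{2\pi} u_{\gamma 1}$, so that $F(\eta) = \delta_{\gamma} H_\eta$. By Lemma \ref{Lemma:Gzgamma-ugamma1-dom(Lapgammatheta)}, $H_\eta \in \dom(\Lap) = H^2(\S[2])$, hence $F$ is well defined. To compute derivatives, take two points $\eta, \eta' \in J_m$. The difference
\[
H_\eta - H_{\eta'} = G_{\eta}^{\gamma} - G_{\eta'}^{\gamma} = \sum_{\ell} \frac{\eta - \eta'}{(\lambda_{2\ell}^2 - \eta)(\lambda_{2\ell}^2 - \eta')} Y_{2\ell}^{\gamma}
\]
is a series converging in $H^2$, because its coefficients decay like $\lambda_{2\ell}^{-4}\norm{Y_{2\ell}^\gamma}$. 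Applying $\delta_\gamma$ and using $\delta_\gamma Y_{2\ell}^\gamma = \norm{Y_{2\ell}^\gamma}^2$ (from \eqref{Eq:def-Yellgamma}), we get
\[
F(\eta) - F(\eta') = (\eta - \eta') \sum_{\ell \in \N} \frac{\norm{Y_{2\ell}^{\gamma}}[L^2(\S[2])]^2}{(\lambda_{2\ell}^2 - \eta)(\lambda_{2\ell}^2 - \eta')} \ .
\]
Equivalently, $F'(\eta) = \sum_\ell \norm{Y_{2\ell}^\gamma}^2 (\lambda_{2\ell}^2-\eta)^{-2}$. We still need to check that this sum converges. By Lemma \ref{Lemma:relation_Yellgamma-Zellq}, $\norm{Y_{2\ell}^\gamma}^2 = P_{2\ell}(0)^2 \frac{4\ell+1}{4\pi} \sim \frac{1}{\pi^2}$, so the series converges like $\sum \ell^{-4}$. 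For $\eta,\eta'$ in the same interval $J_m$, every factor $(\lambda_{2\ell}^2-\eta)(\lambda_{2\ell}^2-\eta')$ is positive, since both points lie on the same side of each $\lambda_{2\ell}^2$. Also $Y_{2\ell}^\gamma \neq 0$ because $P_{2\ell}(0)\neq 0$. Hence the sum is strictly positive, and $F$ is strictly increasing on $J_m$.

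For bijectivity when $m\ge1$, I will use continuity together with the limits at the endpoints. Split $F(\eta) = c_0 + \sum_\ell \norm{Y_{2\ell}^\gamma}^2\big[(\lambda_{2\ell}^2-\eta)^{-1} - (\lambda_{2\ell}^2-\eta_0)^{-1}\big]$ for a fixed base point $\eta_0\in J_m$, which follows from the difference formula above. This series converges locally uniformly on $J_m$, so $F$ is continuous. As $\eta \to \lambda_{2m}^2$ from below, the term $\ell = m$ is $\norm{Y_{2m}^\gamma}^2/(\lambda_{2m}^2-\eta) \to +\infty$, while the remaining terms stay bounded near that endpoint by uniform convergence. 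Hence $F \to +\infty$. Symmetrically, as $\eta\to\lambda_{2m-2}^2$ from above, the term $\ell=m-1$ tends to $-\infty$, so $F\to-\infty$. A continuous strictly increasing function with these limits maps $J_m$ onto $\R$, so $F$ is bijective.

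The step I expect to need the most care is justifying the termwise application of $\delta_\gamma$, that is, the $H^2$ convergence of the difference series. The expansion of $G_\eta^\gamma$ itself does not converge in $H^2$, since $G_\eta^\gamma\notin H^2$, and this is exactly why I work with differences $G_\eta^\gamma - G_{\eta'}^\gamma$ rather than with $G_\eta^\gamma$ directly. The $H^2$ norm of the difference series is controlled by $\sum (1+\lambda_{2\ell}^2)^2 \lambda_{2\ell}^{-8}\norm{Y_{2\ell}^\gamma}^2 < \infty$, which uses the Legendre asymptotics $P_{2\ell}(0)^2\sim (\pi\ell)^{-1}$. For $m=0$ only monotonicity is asserted, and the same argument applies verbatim on $(-\infty,0)$.
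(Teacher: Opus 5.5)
Your proof is correct and follows essentially the same route as the paper. Both expand $G_\eta^\gamma$ in the $Y_{2\ell}^\gamma$ and apply $\delta_\gamma$ termwise using $\delta_\gamma Y_{2\ell}^\gamma = \lVert Y_{2\ell}^\gamma\rVert^2$ to obtain a positive series (you use difference quotients where the paper writes $F'(\eta)=\lVert G_\eta^\gamma\rVert^2$). Both then isolate the single term that blows up at each endpoint: you via a base point and uniform control of the other terms, the paper via the removable singularity of $G_z^\gamma-\frac{1}{2\pi}u_{\gamma 1}-(\lambda_{2m}^2-z)^{-1}Y_{2m}^\gamma$.

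Working with the $H^2$-convergent differences $G_\eta^\gamma-G_{\eta'}^\gamma$ is a clean way to justify the termwise application of $\delta_\gamma$. The one point you leave implicit is that $F$ is real-valued. This holds because $Y_{2\ell}^\gamma=P_{2\ell}(0)Z_{2\ell}^q$ and $u_{\gamma 1}$ are real.
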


\begin{proof}
    From the eigenfunction expansion of $G_{\eta}^{\gamma}$ for $\eta \in \R$ \eqref{Eq:EF-expansion-Gzgamma}, we see that $G_{\eta}^{\gamma}$ is real-valued (because $Y_{\ell}^{\gamma}$ is real-valued for all $\ell \in \N$), and since $u_{\gamma 1}$ is real-valued too \eqref{Eq:functions-ugamma01}, then $F(\eta) \in \R$ for all $\eta \in J_{m}$. Moreover, Proposition \ref{Prop:Existence-Uniqueness-Gzgamma} gives that $G^{\gamma}$ is analytic on $\C \setminus \Speceven(\Lap)$, therefore $F$ is analytic on $J_{m}$, and thus differentiable. We compute $F'(\eta)$, for $\eta \in J_{m}$.
    \begin{align*}
        F'(\eta) & = \delta_{\gamma} \bigg( \frac{\D{}}{\D{\tau}} \left. \Big( G_{\tau}^{\gamma} - u_{\gamma 1} \Big) \right|_{\tau = \eta} \bigg) = \delta_{\gamma} \bigg( \sum_{\ell \in \N} \frac{1}{(\lambda_{2\ell}^{2} - \eta)^2} Y_{2 \ell}^{\gamma} \bigg) \\
        & = \sum_{\ell \in \N} \frac{1}{(\lambda_{2\ell}^{2} - \eta)^2} \delta_{\gamma}Y_{2 \ell}^{\gamma} = \sum_{\ell \in \N} \frac{1}{(\lambda_{2\ell}^{2} - \eta)^2} \norm*{Y_{2 \ell}^{\gamma}}[L^2(\S[2])]^2 = \norm*{G_{\eta}^{\gamma}}[L^2(\S[2])]^2 > 0 \ .
    \end{align*}
    This proves that $F$ is strictly increasing on $J_{m}$. To show that it is bijective, it suffices to show that $\lim_{\eta \to \lambda_{2m}^2} F(\eta) = +\infty$ and $\lim_{\eta \to \lambda_{2m - 2}^{2}} F(\eta) = -\infty$ for $m \geq 1$.

    Let us define the function
    \[
    H^{\gamma}(z) \coloneqq G_{z}^{\gamma} - u_{\gamma 1} - \frac{1}{\lambda_{2m}^{2} - z} Y_{2m}^{\gamma} \ , \qquad z \in \C \setminus \Speceven(\Lap) \ .
    \]
    Since $\Lap(G_{z}^{\gamma} - u_{\gamma 1}) = zG_{z}^{\gamma} - 2u_{\gamma 1}$, we see that $H^{\gamma}$ is a $H^2(\S[2])$-valued analytic function with a removable singularity at $z = \lambda_{2m}^{2}$. Therefore
    \begin{align*}
        \lim_{\eta \to \lambda_{2m}^{2}} F(\eta) & = \lim_{\eta \to \lambda_{2m}^{2}} \delta_{\gamma} \bigg( H^{\gamma}(\eta) + \frac{1}{\lambda_{2m}^{2} - \eta} Y_{2m}^{\gamma} \bigg) \\
        & = \delta_{\gamma} \Big( H^{\gamma}(\lambda_{2m}^{2}) \Big) + \norm*{Y_{2m}^{\gamma}}[L^2(\S[2])]^2 \lim_{\eta \to \lambda_{2m}^{2}} \frac{1}{\lambda_{2m}^{2} - \eta} = + \infty \ .
    \end{align*}
    The statement $\lim_{\eta \to \lambda_{2\ell - 2}^{2}} F(\eta) = -\infty$ is proven similarly.
\end{proof}

\section{Quasimodes for new eigenfunctions}
\label{Sec:QuasimodesforGF}

In this section, we are going to find quasimodes of $\Lap$ that approximate $G_{\eta}^{\gamma}$ in the high-energy limit. This is the semiclassical limit; thus, we instead introduce a semiclassical parameter $h > 0$ and write everything in terms of it.

Given $h > 0$ such that $h^{-2} \notin \Speceven(\Lap)$, we will denote it throughout the rest of the paper
\begin{equation} \label{Eq:def-Ghgamma}
    G_{h}^{\gamma} \coloneqq \sum_{\ell \in \N} \frac{1}{\lambda_{2\ell}^2 - h^{-2}} Y_{2\ell}^{\gamma} \ ,
\end{equation}
and
\begin{equation} \label{Eq:def-ghgamma}
    g_{h}^{\gamma} \coloneqq \frac{1}{\norm*{G_{h}^{\gamma}}[L^2(\S[2])]} G_{h}^{\gamma} \ .
\end{equation}
We look for quasimodes of $g_{h}^{\gamma}$ of the form
\[
\frac{1}{\norm*{G_{h}^{\gamma}}[L^2(\S[2])]} \sum_{\ell \in I_h} \frac{1}{\lambda_{2\ell}^2 - h^{-2}} Y_{2\ell}^{\gamma} \ ,
\]
for some finite interval $I_h$ depending on $h$. For that, we need to understand the asymptotics of
\[
\frac{1}{\norm*{G_{h}^{\gamma}}[L^2(\S[2])]} \qquad \text{as $h \to 0^+$,}
\]
and of the $L^2$-norm of the terms
\begin{equation*} 
    \frac{1}{\lambda_{2\ell}^2 - h^{-2}} Y_{2\ell}^{\gamma} \qquad \text{as $h \to 0^+$ and $\ell$ depends on $h$.}
\end{equation*}
Therefore, given a sequence $(h_{n})_{n \in \N} \subseteq (0, \infty)$ such that $h_n^{-2} \notin \Speceven(\Lap)$, for every $n \in \N$ we introduce $\ell_n \in \N$ and $\sigma_n \in (0,1)$ defined by the following identity
\begin{equation} \label{Eq:def-elln+sigman-gamma}
    (h_n)^{-2} = \big( 2(\ell_n + \sigma_n) \big) \big( 2(\ell_n + \sigma_n) + 1 \big) \ .
\end{equation}
One has $(h_n)^{-2} \in (\lambda_{2\ell_n}^2 , \lambda_{2\ell_n + 2}^2)$, and the parameter $\sigma_n$ measures the proximity of $(h_n)^{-2}$ to the ends of this interval, which will help us control the coefficients $( \lambda_{2\ell}^2 - h^{-2} )^{-1}$. Lastly, for $\ell \in \N$, define
\begin{equation} \label{Eq:normalized-Yellgamma}
    y_{2\ell}^{\gamma} \coloneqq \frac{1}{\norm{Y_{2\ell}^{\gamma}}[L^2(\S[2])]} Y_{2\ell}^{\gamma} \ ,
\end{equation}

\begin{Theorem} \label{Thm:quasimodes-Ghgamma}
    Let $(h_n)_{n \in \N} \subseteq (0,1)$ such that $h_n^{-2} \notin \Speceven(\Lap)$, and let $\ell_n \in \N$ and $\sigma_n \in (0,1)$ be defined as in \eqref{Eq:def-elln+sigman-gamma}. Assume that $\lim_{n \to \infty} h_n = 0$ and that $\lim_{n \to \infty} \sigma_n = \sigma \in [0,1]$.
    \begin{enumerate}
        \item If $\sigma \in (0,1)$, set
        \begin{equation} \label{Eq:Csigma-definition}
             C_{\sigma} \coloneqq \bigg( \sum_{k \in \Z} \frac{1}{(k - \sigma)^2} \bigg)^{\frac{1}{2}} \ .
         \end{equation}
        There exists $C > 1$ such that for every $\Upsilon \geq 1$ there exists $N \in \N$ such that for all $n \geq N$,
        \[
        \norm*{ g_{h_n}^{\gamma} - \frac{1}{C_{\sigma}} \sum_{\abs{k - \sigma} \leq \Upsilon} \frac{1}{k - \sigma} y_{2(\ell_n + k)}^{\gamma} }[L^2(\S[2])] \leq C \Big[\Upsilon h_n + \abs{\sigma_n - \sigma} + \Upsilon^{-\frac{1}{2}} \Big] \ .
        \]
        \item If $\sigma \in \{0, 1\}$, there exists $C > 1$ and $N \in \N$ such that for all $n \geq N$,
        \[
        \norm*{ g_{h_n}^{\gamma} - (-1)^{1 - \sigma} y_{2(\ell_n + \sigma)}^{\gamma}}[L^2(\S[2])] \leq C \Big[ h_n + \abs{\sigma_n - \sigma} \Big] \ . 
        \]
    \end{enumerate}
\end{Theorem}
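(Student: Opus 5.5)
The plan is to work directly with the orthogonal expansion \eqref{Eq:def-Ghgamma}. Distinct $Y_{2\ell}^{\gamma}$ lie in distinct eigenspaces of $\Lap$, so the family $\{y_{2\ell}^{\gamma}\}_{\ell}$ is orthonormal, and
\[
G_{h_n}^{\gamma} = \sum_{\ell \in \N} a_{\ell}^{(n)} \, y_{2\ell}^{\gamma} \ , \qquad a_{\ell}^{(n)} \coloneqq \frac{\norm{Y_{2\ell}^{\gamma}}[L^2(\S[2])]}{\lambda_{2\ell}^2 - h_n^{-2}} \ .
\]
All $L^2$ estimates therefore reduce to $\ell^2$ estimates on the coefficient sequence. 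Two explicit inputs are needed.

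\textbf{Norms of $Y_{2\ell}^{\gamma}$.} By Lemma \ref{Lemma:relation_Yellgamma-Zellq} and $\norm{Z_{2\ell}^{q}}^2 = (4\ell+1)/\vol(\S[2])$,
\[
\norm{Y_{2\ell}^{\gamma}}^2 = P_{2\ell}(0)^2 \, \frac{4\ell+1}{4\pi} = \Big(\frac{\Gamma(\ell+\frac12)}{\Gamma(\frac12)\Gamma(\ell+1)}\Big)^2 \frac{4\ell+1}{4\pi} \ .
\]
Stirling ($\Gamma(\ell+\tfrac12)/\Gamma(\ell+1) = \ell^{-1/2}(1+\BigO(\ell^{-1}))$) shows that this tends to $c_\infty^2 \coloneqq 1/\pi^2$ with relative error $\BigO(\ell^{-1})$. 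In particular the norms are bounded above and below uniformly in $\ell$.

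\textbf{Denominators.} Writing $\ell = \ell_n + k$ with $k \geq -\ell_n$, \eqref{Eq:def-elln+sigman-gamma} gives the exact factorization
\[
\lambda_{2\ell}^2 - h_n^{-2} = 2(k-\sigma_n)\,\big(4\ell_n + 2k + 2\sigma_n + 1\big) \ .
\]
Since $4\ell_n \sim 2h_n^{-1}$, one gets, uniformly for $\abs{k} \leq \Upsilon$,
\[
a_{\ell_n+k}^{(n)} = \frac{c_\infty}{8\ell_n (k-\sigma_n)}\big(1 + \BigO(\Upsilon h_n)\big) \ .
\]

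\textbf{Case $\sigma\in(0,1)$.} I split the sum into $\abs{k-\sigma}\le\Upsilon$ and the tail. For the tail I use the crude bound $4\ell_n+2k+2\sigma_n+1 \geq 2\ell_n + 2\sigma_n$, valid for all $k \geq -\ell_n$ (it grows for $k>0$), together with the uniform bound on $\norm{Y}$. This gives
\[
\sum_{\abs{k}>\Upsilon} \abs{a^{(n)}_{\ell_n+k}}^2 \lesssim \ell_n^{-2} \sum_{\abs{k}>\Upsilon} (k-\sigma_n)^{-2} \lesssim \ell_n^{-2}\Upsilon^{-1} \ .
\]
On the finite window I replace $\sigma_n$ by $\sigma$. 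Because $\sigma$ stays away from the integers, for $n$ large
\[
\abs{(k-\sigma_n)^{-1}-(k-\sigma)^{-1}} \lesssim \abs{\sigma_n-\sigma}\,(1+\abs{k})^{-2} \ ,
\]
which is square summable. Consequently
\[
8\ell_n c_\infty^{-1}\, G_{h_n}^{\gamma} = \sum_{\abs{k-\sigma}\le\Upsilon}\frac{1}{k-\sigma}\,y^{\gamma}_{2(\ell_n+k)} + R_n \ , \qquad \norm{R_n} \lesssim \Upsilon h_n + \abs{\sigma_n-\sigma}+\Upsilon^{-1/2} \ .
\]
The truncated sum has norm $C_\sigma + \BigO(\Upsilon^{-1/2})$, since $\sum_{\abs{k}>\Upsilon}(k-\sigma)^{-2}=\BigO(\Upsilon^{-1})$. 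Normalizing then uses the elementary inequality $\norm{u/\norm{u} - v/C_\sigma} \leq 2\norm{u - v}/C_\sigma$ whenever $\norm{v}$ is close to $C_\sigma$. This yields (1), with $C$ depending only on $\sigma$, through $C_\sigma$ and the distance of $\sigma$ to $\Z$.

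\textbf{Case $\sigma \in\{0,1\}$.} Take $\sigma = 0$; the case $\sigma=1$ is symmetric, with $k=1$. Here the term $k=0$ dominates: $a^{(n)}_{\ell_n} \approx -c_\infty/(8\ell_n\sigma_n)$, which is negative. All remaining coefficients have $\abs{k-\sigma_n}\ge 1/2$ for $n$ large, so by the same bounds as above their $\ell^2$-mass is $\BigO(\ell_n^{-2})$ with no $\Upsilon$-truncation needed. The ratio of the remainder to the main term is therefore $\BigO(\sigma_n)$. Normalizing gives
\[
\norm{g^{\gamma}_{h_n} + y^{\gamma}_{2\ell_n}} \lesssim \sigma_n \ .
\]
The sign $-1 = (-1)^{1-0}$ matches the statement, and for $\sigma=1$ the sign is $+1 = (-1)^{0}$, again as claimed. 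The extra $h_n$ in the stated bound is harmless.

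The main obstacle I expect is bookkeeping rather than an idea: making the $\BigO(\Upsilon h_n)$ relative error uniform on the window $\abs{k}\le\Upsilon$, where the asymptotics of $\norm{Y_{2\ell}}$ and of the factor $4\ell_n+2k+\dots$ are both used. The second point of care is the low-$\ell$ part of the tail ($k$ near $-\ell_n$), where the asymptotics fail and only the uniform upper bound on $\norm{Y_{2\ell}^{\gamma}}$, together with the crude denominator bound, is available.
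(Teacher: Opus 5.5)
Your proposal is correct and follows essentially the paper's route: you expand $G_{h_n}^{\gamma}$ in the orthonormal family $y_{2\ell}^{\gamma}$, use $\|Y_{2\ell}^{\gamma}\|^2 \to 1/\pi^2$ and the factorization of $\lambda_{2(\ell_n+k)}^2 - h_n^{-2}$, bound the tail by $O(\Upsilon^{-1/2})$ relative to the main term, and compare coefficients on the window. The paper computes $\|G_{h_n}^{\gamma}\|$ separately in Proposition \ref{Prop:Ghgamma_mainterm-L2norm} before normalizing, whereas you rescale by $8\pi\ell_n$ and normalize only at the end.

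One small fix is needed: the normalization inequality as you state it fails when $\|v\| \neq C_\sigma$. The correct form is $\|u/\|u\| - v/C_\sigma\| \le 2\|u-v\|/\|v\| + \lvert \|v\| - C_\sigma \rvert / C_\sigma$. The extra term is already controlled by your estimate $\|v\| = C_\sigma + O(\Upsilon^{-1/2})$, so the conclusion stands.
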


Theorem \ref{Thm:quasimodes-Ghgamma} will be q consequence of the asymptotics provided in Propositions \ref{Prop:Ghgamma_tails-L2norm-estimate} and \ref{Prop:Ghgamma_mainterm-L2norm}, which themselves depend on several simpler asymptotics, which we collect in lemmas below.

\begin{Lemma} \label{Lemma:norm-Yellgamma-asymptotic}
    Let $\gamma$ be a closed geodesic in $\S[2]$, and let $Y_{\ell}^{\gamma}$ be the eigenfunction defined in \eqref{Eq:def-Yellgamma}. There exists $C > 0$ such that for all $\ell \in \N$
    \begin{equation} \label{Eq:norm_asymptotic-Y2lgamma}
        \abs*{ \norm*{Y_{2\ell}^{\gamma}}[L^2(\S[2])]^2 - \frac{1}{\pi^2} } \leq C (1 + \ell)^{-1} \ .
    \end{equation}
\end{Lemma}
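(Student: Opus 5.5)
The plan is to compute the norm exactly using Lemma \ref{Lemma:relation_Yellgamma-Zellq}, and then apply Stirling-type asymptotics for central binomial coefficients. By that lemma, $Y_{2\ell}^{\gamma} = P_{2\ell}(0)\, Z_{2\ell}^{q}$ for a point $q$ with $d(q,\gamma)=\frac{\pi}{2}$. From the reproducing property \eqref{Eq:def-Zellq} we have
\[
\norm{Z_{2\ell}^{q}}[L^2(\S[2])]^2 = Z_{2\ell}^{q}(q) = \frac{4\ell+1}{\vol(\S[2])} P_{2\ell}(1) = \frac{4\ell+1}{4\pi}.
\]
Hence
\[
\norm*{Y_{2\ell}^{\gamma}}[L^2(\S[2])]^2 = \frac{4\ell+1}{4\pi}\, P_{2\ell}(0)^2 = \frac{4\ell+1}{4\pi} \bigg(\frac{\Gamma(\ell+\frac12)}{\Gamma(\frac12)\Gamma(\ell+1)}\bigg)^2,
\]
using \eqref{Eq:values-legendrepoly_zero}. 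Equivalently, one can note $\norm{Y_{2\ell}^{\gamma}}^2 = \delta_\gamma Y_{2\ell}^{\gamma} = P_{2\ell}(0)\,\frac{4\ell+1}{4\pi}P_{2\ell}(0)$, which gives the same identity.

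The remaining step, and the only real analytic point, is the asymptotic
\[
\frac{\Gamma(\ell+\frac12)}{\Gamma(\ell+1)} = \ell^{-1/2}\big(1+\BigO(\ell^{-1})\big).
\]
This follows from the standard ratio expansion $\Gamma(x+a)/\Gamma(x+b) = x^{a-b}(1+\BigO(x^{-1}))$ (Stirling), or elementarily from Wallis-type bounds such as $\frac{1}{\sqrt{\ell+1/2}} \le \frac{\Gamma(\ell+\frac12)}{\Gamma(\ell+1)} \le \frac{1}{\sqrt{\ell+1/4}}$ (Gautschi/Kershaw inequalities). With $\Gamma(\tfrac12)^2=\pi$, this gives
\[
P_{2\ell}(0)^2 = \frac{1}{\pi\ell}\big(1+\BigO(\ell^{-1})\big),
\]
and therefore
\[
\norm{Y_{2\ell}^{\gamma}}^2 = \frac{4\ell+1}{4\pi^2\ell}\big(1+\BigO(\ell^{-1})\big) = \frac{1}{\pi^2} + \BigO(\ell^{-1})
\]
for $\ell\ge1$. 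The case $\ell=0$, where $\norm{Y_0^\gamma}^2 = 1/(4\pi)$, is absorbed by enlarging $C$, so a bound with $(1+\ell)^{-1}$ holds uniformly.

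I expect no real obstacle. The only care needed is to keep the error uniform in $\ell$, which is why I would prefer the explicit two-sided Kershaw bounds over a purely asymptotic statement: they give
\[
\abs*{P_{2\ell}(0)^2 - \frac{1}{\pi \ell}} \le \frac{C}{\ell^{2}}
\]
for all $\ell\ge1$ directly.
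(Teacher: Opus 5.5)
Your proof is correct and is essentially the paper's argument: both write $Y_{2\ell}^{\gamma}=P_{2\ell}(0)\,Z_{2\ell}^{q}$ using the relation with zonal harmonics at a pole $q$ of $\gamma$, compute $\|Y_{2\ell}^{\gamma}\|^2=\frac{4\ell+1}{4\pi}P_{2\ell}(0)^2$ exactly, and conclude with the ratio asymptotic $\Gamma(x+a)/\Gamma(x+b)=x^{a-b}(1+\mathcal{O}(x^{-1}))$, absorbing small $\ell$ into the constant. Your optional Wallis--Kershaw bounds are a small bonus: they give the explicit uniform estimate $0\le \frac{1}{\pi^2}-\|Y_{2\ell}^{\gamma}\|^2\le \frac{1}{\pi^2(4\ell+2)}$ for every $\ell\ge 0$, with no separate treatment of small $\ell$.
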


\begin{proof}
    Lemma \ref{Lemma:relation_Yellgamma-Zellq} implies that for all $\ell \in \N$
    \[
    Y_{2\ell}^{\gamma} = (-1)^{\ell} \frac{\Gamma(\frac{1}{2} + \ell)}{\Gamma(\frac{1}{2}) \Gamma(\ell + 1)} Z_{2\ell}^{q} \ .
    \]
    Moreover, recall that
    \[
    \norm*{Z_{\ell}^{q}}[L^2(\S[2])]^2 = Z_{\ell}^{q}(q) = \frac{m_{\ell}}{\vol(\S[2])} = \frac{2\ell + 1}{\vol(\S[2])} \ .
    \]
    Putting both things together, we find that
    \begin{equation*}
        \norm*{Y_{\ell}^{\gamma}}[L^2(\S[2])]^2 = \frac{1}{\Gamma(\frac{1}{2})^2 \vol(\S[2])} \frac{\Gamma(\ell + \frac{1}{2})^2 (4\ell + 1)}{\Gamma(\ell + 1)^2} \ .
    \end{equation*}
    Lastly, we may invoke the following asymptotic estimate for the Gamma function \cite[(5.11.3)]{NISTHandbook2010},
    \[
    \frac{\Gamma(x + a)}{\Gamma(x + b)} = x^{a-b}(1 + \BigO(x^{-1})) \qquad \text{as $x \to +\infty$,}
    \]
    and the identities $\Gamma(1/2) = \sqrt{\pi}$ and $\vol(\S[2]) = 4\pi$, to obtain
    \[
    \abs*{\norm*{Y_{\ell}^{\gamma}}[L^2(\S[2])]^2 - \frac{1}{\pi^2} } = \BigO(\ell^{-1}) \qquad \text{as $\ell \to \infty$.}
    \]
    Since $\N$ is discrete in $\R$, one can find $C > 0$ such that \eqref{Eq:norm_asymptotic-Y2lgamma} holds for all $\ell \in \N$
\end{proof}

\begin{Lemma} \label{Lemma:hn-elln-approximation-gamma}
    For $h > 0$ such that $h^{-2} \notin \Spec(\Lap)$, set $\ell \in \N$ and $\sigma \in (0,1)$ such that
    \[
    h^{-2} = \big( 2(\ell + \sigma) \big) \big( 2(\ell + \sigma) + 1\big) \in (\lambda_{2\ell}^2, \lambda_{2\ell + 2}^2) \ .
    \]
    Then, for every $0 < h < 1$
    \begin{equation*}
        \abs*{h^{-1} - [2(\ell + \sigma) + \tfrac{1}{2}] } \leq h \ , \qquad \abs*{h - \frac{1}{2(\ell + \sigma) + \frac{1}{2} } } \leq h^3 \ .
    \end{equation*}
\end{Lemma}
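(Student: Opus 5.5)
Set $x \coloneqq 2(\ell+\sigma) > 0$, so that $h^{-2} = x(x+1)$. The plan is to compare $h^{-1} = \sqrt{x(x+1)}$ with $x + \tfrac{1}{2}$ by completing the square:
\[
h^{-2} = \big(x + \tfrac{1}{2}\big)^2 - \tfrac{1}{4} \ .
\]
Hence
\[
\big(x + \tfrac{1}{2}\big) - h^{-1} = \frac{(x+\frac{1}{2})^2 - h^{-2}}{(x + \frac{1}{2}) + h^{-1}} = \frac{1}{4\big[(x + \frac{1}{2}) + h^{-1}\big]} \ .
\]
This quantity is positive. Since $x + \tfrac{1}{2} > h^{-1}$, the denominator exceeds $8h^{-1}$, so
\[
0 < \big(x + \tfrac{1}{2}\big) - h^{-1} \leq \tfrac{1}{8} h \leq h \ .
\]
This is the first inequality. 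Note that it does not even use $h<1$.

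For the second inequality, write
\[
h - \frac{1}{x + \frac{1}{2}} = \frac{(x + \frac{1}{2}) - h^{-1}}{h^{-1}\,(x + \frac{1}{2})} \ .
\]
The numerator is at most $\tfrac{1}{8}h$ by the first step. In the denominator, $h^{-1}(x+\tfrac{1}{2}) \geq h^{-2}$, again because $x + \tfrac{1}{2} > h^{-1}$. Therefore
\[
0 < h - \frac{1}{x+\frac{1}{2}} \leq \tfrac{1}{8} h^{3} \leq h^{3} \ .
\]

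The argument is elementary, and I expect no real obstacle. The only care needed is keeping track of signs, and checking that $x>0$ so that all square roots and denominators are positive. The hypothesis $0<h<1$ is merely compatible with the statement (any $h$ arising this way satisfies $h^{-2} > 0$) and is not needed for the bounds.
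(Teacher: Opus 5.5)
Your proposal is correct and follows essentially the same route as the paper: you rationalize $h^{-1}-(x+\tfrac12)$ using $h^{-2}=(x+\tfrac12)^2-\tfrac14$ to get the first bound, then divide by $h^{-1}(x+\tfrac12)\geq h^{-2}$ for the second. The only difference is that you keep the factor $2h^{-1}$ in the denominator, which yields the slightly sharper constant $\tfrac18$ and a definite sign; the paper simply bounds the denominator below by $h^{-1}$.
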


\begin{proof}
    The first inequality follows from a direct computation:
    \[
    \abs*{h^{-1} - (2(\ell + \sigma) + \tfrac{1}{2}) } = \frac{\abs*{h^{-2} - [2(\ell + \sigma) + \frac{1}{2}]^2 } }{ h^{-1} + [2(\ell + \sigma) + \frac{1}{2}] } \leq \frac{ \frac{1}{4} }{ h^{-1} } \leq h \ .
    \]
    The second inequality is a consequence of the first. We see that
    \[
    2(\ell + \sigma) + \tfrac{1}{2} \geq h^{-1} \ ,
    \]
    therefore,
    \[
    \abs*{h - \frac{1}{2(\ell + \sigma) + \frac{1}{2}} } = \frac{ \abs*{h^{-1} - [2(\ell + \sigma) + \tfrac{1}{2}] } }{ h^{-1} [2(\ell + \sigma) + \tfrac{1}{2}] } \leq \frac{h}{h^{-2}} = h^3 \ . \qedhere
    \]
\end{proof}

\begin{Lemma} \label{Lemma:lambdaell+k-asymptotic-gamma}
    For $h > 0$ such that $h^{-2} \notin \Spec(\Lap)$, set $\ell \in \N$ and $\sigma \in (0,1)$ such that
    \[
    h^{-2} = \big( 2(\ell + \sigma) \big) \big( 2(\ell + \sigma) + 1\big) \in (\lambda_{2\ell}^2, \lambda_{2\ell + 2}^2) \ .
    \]
    Then, for every $0 < h < 1$ and every $k \in \Z$, $k \geq -\ell$,
    \begin{equation} \label{Eq:lambdaell+k-asymptotic}
        \abs*{ \lambda_{2(\ell + k)}^2 - \Big[ h^{-2} + 4(k - \sigma) h^{-1} + 4(k - \sigma)^2 \Big]} \leq 4 \abs{k - \sigma} h \ .
    \end{equation}
\end{Lemma}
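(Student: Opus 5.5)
The plan is an exact algebraic identity followed by a single application of Lemma \ref{Lemma:hn-elln-approximation-gamma}. Write $x \coloneqq 2(\ell + \sigma)$, so that $h^{-2} = x(x+1)$. Since $2(\ell + k) = x + 2(k - \sigma)$, we have
\[
\lambda_{2(\ell + k)}^2 = \big( x + 2(k-\sigma) \big)\big( x + 2(k-\sigma) + 1 \big) \ .
\]
Expanding the product and using $x(x+1) = h^{-2}$ gives the exact identity
\[
\lambda_{2(\ell + k)}^2 = h^{-2} + 2(k-\sigma)(2x+1) + 4(k-\sigma)^2 = h^{-2} + 4(k-\sigma)\big(x + \tfrac{1}{2}\big) + 4(k-\sigma)^2 \ .
\]
The hypothesis $k \geq -\ell$ only guarantees that $\ell + k \in \N$, so that $\lambda_{2(\ell+k)}$ is defined; it plays no other role.

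Subtracting the bracket on the left-hand side of \eqref{Eq:lambdaell+k-asymptotic}, the quadratic terms cancel exactly. What remains is
\[
\lambda_{2(\ell + k)}^2 - \Big[ h^{-2} + 4(k - \sigma) h^{-1} + 4(k - \sigma)^2 \Big] = 4(k-\sigma)\Big( \big[2(\ell+\sigma) + \tfrac{1}{2}\big] - h^{-1} \Big) \ .
\]

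Taking absolute values, it remains to bound $\abs{h^{-1} - [2(\ell+\sigma) + \tfrac12]}$. The first inequality of Lemma \ref{Lemma:hn-elln-approximation-gamma} gives exactly $\abs{h^{-1} - [2(\ell+\sigma) + \tfrac12]} \leq h$ for $0 < h < 1$. This yields \eqref{Eq:lambdaell+k-asymptotic}.

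There is no real obstacle here. The only point requiring care is recognizing that $h^{-1}$ should be compared with $x + \tfrac12$ rather than with $x$. This works because $h^{-2} = (x+\tfrac12)^2 - \tfrac14$, and Lemma \ref{Lemma:hn-elln-approximation-gamma} controls precisely that difference.
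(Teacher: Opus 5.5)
Your proposal is correct and takes essentially the same approach as the paper. The paper derives the same exact identity $\lambda_{2(\ell+k)}^2 - \big[h^{-2} + 4(k-\sigma)h^{-1} + 4(k-\sigma)^2\big] = 4(k-\sigma)\big[2(\ell+\sigma) + \tfrac12 - h^{-1}\big]$, using $x(x+1)-y(y+1) = (x-y)(x+y+1)$ instead of direct expansion, and then concludes with the first inequality of Lemma \ref{Lemma:hn-elln-approximation-gamma}.
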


\begin{proof}
    Using the identity $x(x + 1) - y(y + 1) = (x - y)(x + y + 1)$, we find that
    \begin{align*}
        \lambda_{2(\ell + k)}^2 - h^{-2} & = \big( 2(\ell + k) \big) \big( 2(\ell + k) + 1\big) - \big( 2(\ell + \sigma) \big) \big( 2(\ell + \sigma) + 1\big) \\
        & = 2(k - \sigma) [ 4\ell + 2k + 2\sigma + 1] \ .
    \end{align*}
    Writing
    \[
    4\ell + 2k + 2\sigma + 1 = 2\Big[ 2(\ell + \sigma) + \tfrac{1}{2} \Big] + 2(k - \sigma) \ ,
    \]
    one finds that
    \[
    \lambda_{2(\ell + k)}^2 - h^{-2} - 4(k - \sigma)h^{-1} - 4(k - \sigma)^2 = 4(k - \sigma)\Big[ 2(\ell + \sigma) + \tfrac{1}{2} - h^{-1} \Big] \ .
    \]
    Now Lemma \ref{Lemma:hn-elln-approximation-gamma} implies
    \[
    \abs*{ \lambda_{2(\ell + k)}^2 - h^{-2} - 4(k - \sigma) h^{-1} - 4(k - \sigma)^2 } \leq 4\abs{k - \sigma} h \ . \qedhere
    \]
\end{proof}

\begin{Lemma} \label{Lemma:asymptotic-summand-gamma}
    For $h > 0$ such that $h^{-2} \notin \Spec(\Lap)$, set $\ell \in \N$ and $\sigma \in (0,1)$ such that
    \[
    h^{-2} = \big( 2(\ell + \sigma) \big) \big( 2(\ell + \sigma) + 1\big) \in (\lambda_{2\ell}^2, \lambda_{2\ell + 2}^2) \ .
    \]
    There exists $C > 0$ and $h_0 > 0$  such that for all $0 < h < h_0$ and all $k \geq -\ell$
    \begin{multline*}
        \abs*{ \frac{1}{\abs*{\lambda_{2(\ell + k)}^2 - h^{-2}}^2 } \norm*{Y_{2(\ell + k)}^{\gamma}}[L^2(\S[2])]^2 - \frac{1}{16(k - \sigma)^2} \frac{1}{ (h^{-1} + k - \sigma)^2} \frac{1}{\pi^2} } \\
        \leq \frac{C}{\abs{k - \sigma}^2} \Big[ h^2 \abs{1 + \ell + k}^{-1} + h^{4} + h^{5} \abs{k - \sigma} \Big] \ .
    \end{multline*}
\end{Lemma}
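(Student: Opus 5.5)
The plan is to combine Lemma \ref{Lemma:lambdaell+k-asymptotic-gamma} and Lemma \ref{Lemma:norm-Yellgamma-asymptotic}, splitting the difference into two terms that are estimated separately. Write
\[
A_k \coloneqq \lambda_{2(\ell+k)}^2 - h^{-2}, \qquad B_k \coloneqq 4(k-\sigma)\big(h^{-1} + k - \sigma\big) = 4(k-\sigma)h^{-1} + 4(k-\sigma)^2 ,
\]
so that \eqref{Eq:lambdaell+k-asymptotic} reads $\abs{A_k - B_k} \le 4\abs{k-\sigma} h$. Since $B_k^2 = 16(k-\sigma)^2(h^{-1}+k-\sigma)^2$, the quantity to bound is
\[
\Big| \frac{\norm{Y}^2}{A_k^2} - \frac{1}{\pi^2 B_k^2}\Big| \le \norm{Y}^2 \Big|\frac{1}{A_k^2} - \frac{1}{B_k^2}\Big| + \frac{1}{B_k^2}\Big|\norm{Y}^2 - \frac{1}{\pi^2}\Big| ,
\]
where $Y = Y_{2(\ell+k)}^{\gamma}$.

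First I need a lower bound for $\abs{B_k}$ and $\abs{A_k}$ of the form $\gtrsim \abs{k-\sigma} h^{-1}$, uniformly in $k \ge -\ell$. Since $k \ge -\ell$, we have $h^{-1} + k - \sigma \ge h^{-1} - (\ell + \sigma)$. By Lemma \ref{Lemma:hn-elln-approximation-gamma}, $\ell + \sigma \approx (h^{-1} - \tfrac12)/2$, so $h^{-1} + k - \sigma \ge \tfrac12 h^{-1} + \tfrac14 - h/2 \ge \tfrac14 h^{-1}$ for $h$ small. Hence $\abs{B_k} \ge \abs{k-\sigma} h^{-1}$. Then $\abs{A_k} \ge \abs{B_k} - 4\abs{k-\sigma} h \ge \tfrac12 \abs{k-\sigma} h^{-1}$ for $h < h_0$. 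I expect this uniformity in $k$ to be the only delicate point: $h^{-1}+k-\sigma$ could a priori be small for very negative $k$, and the constraint $k\ge -\ell$ is exactly what prevents it. Note $\abs{k-\sigma}>0$ because $\sigma\in(0,1)$.

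For the second term, Lemma \ref{Lemma:norm-Yellgamma-asymptotic} gives
\[
\frac{1}{B_k^2}\Big|\norm{Y}^2 - \frac{1}{\pi^2}\Big| \le \frac{C h^2}{\abs{k-\sigma}^2}(1+\ell+k)^{-1}.
\]
This is the first error term in the statement.

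For the first term, $\norm{Y}^2$ is uniformly bounded by Lemma \ref{Lemma:norm-Yellgamma-asymptotic}. Write
\[
\frac{1}{A_k^2} - \frac{1}{B_k^2} = \frac{(B_k - A_k)(B_k + A_k)}{A_k^2 B_k^2}.
\]
The numerator is bounded by $4\abs{k-\sigma} h \cdot \big(2\abs{B_k} + 4\abs{k-\sigma}h\big)$. Using the lower bounds, the piece $8\abs{k-\sigma}h\abs{B_k}/(A_k^2B_k^2)$ is $\lesssim \abs{k-\sigma}h/(\abs{A_k}^2\abs{B_k}) \lesssim h^4/\abs{k-\sigma}^2$. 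The remaining piece $16\abs{k-\sigma}^2h^2/(A_k^2B_k^2)$ is $\lesssim h^6/\abs{k-\sigma}^2$, which is absorbed into $h^4/\abs{k-\sigma}^2$. If one instead uses the finer lower bound $\abs{B_k} \gtrsim \abs{k-\sigma}(h^{-1}+k-\sigma)$, the $h^5\abs{k-\sigma}$ term appears naturally. In any case the result is bounded by the stated right-hand side.

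Combining the two bounds yields the claim with a constant $C$ and $h_0$ independent of $k$.
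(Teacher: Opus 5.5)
Your proposal is correct and follows the paper's proof. The paper uses the same add-and-subtract of $\|Y_{2(\ell+k)}^{\gamma}\|^2/B_k^2$, bounds the numerator by a difference of squares via Lemma \ref{Lemma:lambdaell+k-asymptotic-gamma}, uses the uniform lower bound $h^{-1}+k-\sigma\gtrsim h^{-1}$, and handles the second term with Lemma \ref{Lemma:norm-Yellgamma-asymptotic}; you justify that lower bound from $k\ge-\ell$ and Lemma \ref{Lemma:hn-elln-approximation-gamma}, whereas the paper only asserts it as \eqref{Eq:aux-inequality}. By cancelling a factor $|B_k|$ you actually get a sharper bound without the $h^5|k-\sigma|$ term; in the paper that term comes from bounding $|A_k+B_k|\lesssim|k-\sigma|h^{-1}+|k-\sigma|^2$ against the cruder denominator bound $|k-\sigma|^4h^{-4}$, not from a finer lower bound as your closing remark says.
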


\begin{proof}
    This is a consequence of the triangular inequality, after adding and subtracting
    \[
    \frac{1}{\big[ 4 (k - \sigma) \, ( h^{-1} + k - \sigma) \big]^2 } \norm*{Y_{2(\ell + k)}^{\gamma}}[L^2(\S[2])]^2 \ .
    \]
    On the one hand, thanks to Lemma \ref{Lemma:norm-Yellgamma-asymptotic},
    \begin{multline*}
        \abs*{ \frac{1}{\abs*{\lambda_{2(\ell + k)}^2 - h^{-2}}^2 } \norm*{Y_{2(\ell + k)}^{\gamma}}[L^2(\S[2])]^2 - \frac{1}{\big[ 4 (k - \sigma) \, (h^{-1} + k - \sigma) \big]^2 } \norm*{Y_{2(\ell + k)}^{\gamma}}[L^2(\S[2])]^2 } \\
        \leq C \abs*{ \frac{ \big[ \lambda_{2 (\ell + k)}^2 - h^{-2} \big]^2 - \big[ 4 (k - \sigma) \, (h^{-1} + k - \sigma) \big]^2 }{ \big[ (\lambda_{2 (\ell + k)}^2 - h^{-2}) 4 (k - \sigma) \, (h^{-1} + k - \sigma) \big]^2 } } \ .
    \end{multline*}
    For the numerator, we may apply identity $x^2 - y^2 = (x - y) (x + y)$ and Lemma \ref{Lemma:lambdaell+k-asymptotic-gamma} to obtain
    \begin{multline*}
    \abs*{ \big[ \lambda_{2 (\ell + k)}^2 - h^{-2} \big]^2 - \big[ 4 (k - \sigma) \, (h^{-1} + k - \sigma) \big]^2 } \\
    \leq 4 \abs{k - \sigma} h \ \abs*{ \lambda_{2 (\ell + k)}^2 - h^{-2} + 4(k - \sigma) h^{-1} + 4(k - \sigma)^2} \\
    \leq C \abs{k - \sigma} h \Big[ \abs*{k - \sigma} h^{-1} + \abs{k - \sigma}^2 \Big] \ .
    \end{multline*}
    For the denominator, thanks to Lemma \ref{Lemma:lambdaell+k-asymptotic-gamma},
    \begin{multline*}
        \abs*{ \lambda_{2 (\ell + k)}^2 - h^{-2} } \geq \abs*{ 4(k - \sigma) (h^{-1} + k - \sigma) } - \abs*{\lambda_{2(\ell + k)}^2 - \Big[ h^{-2} + 4(k - \sigma) h^{-1} + 4(k - \sigma)^2 \Big] } \\
        \geq 4 \abs{k - \sigma} (h^{-1} + k - \sigma) - 4 \abs{k - \sigma} h = 4\abs{k - \sigma} (h^{-1} + k - \sigma - h) \ .
    \end{multline*}
    thus, using the fact that there exists $h_0 > 0$ such that 
    \begin{equation} \label{Eq:aux-inequality}
        h^{-1} + k - \sigma - h \geq \frac{1}{2} h^{-1} \qquad \forall \ 0 < h < h_0 \ ,
    \end{equation}
    one finds that
    \[
    \abs*{(\lambda_{2 (\ell + k)}^2 - h^{-2}) 4 (k - \sigma) \, (h^{-1} + k - \sigma) }^2 \geq 16 \abs{k - \sigma}^4 h^{-4} \ .
    \]
    Putting everything together,
    \begin{multline*}
        \abs*{ \frac{1}{\abs*{\lambda_{2(\ell + k)}^2 - h^{-2}}^2 } \norm*{Y_{2(\ell + k)}^{\gamma}}[L^2(\S[2])]^2 - \frac{1}{\big[ 4 (k - \sigma) \, (h^{-1} + k - \sigma) \big]^2 } \norm*{Y_{2(\ell + k)}^{\gamma}}[L^2(\S[2])]^2 } \\
        \leq \frac{ C \abs{k - \sigma} h }{ 16 \abs{k - \sigma}^4 h^{-4} } \Big[ \abs*{k - \sigma} h^{-1} + \abs{k - \sigma}^2 \Big] \leq C \frac{1}{\abs{k - \sigma}^2} \Big[h^4 + h^5 \abs{k - \sigma} \Big]
    \end{multline*}
    
    On the other hand, Lemma \ref{Lemma:norm-Yellgamma-asymptotic} and inequality \eqref{Eq:aux-inequality} let us obtain
    \begin{multline*}
        \abs*{ \frac{1}{\big[ 4 (k - \sigma) \, ( h^{-1} + k - \sigma) \big]^2 } \norm*{Y_{2(\ell + k)}^{\gamma}}[L^2(\S[2])]^2 - \frac{1}{16 (k - \sigma)^2} \frac{1}{(h^{-1} + k - \sigma)^2} \frac{1}{\pi^2} } \\
        \leq \frac{C}{4 (k - \sigma)^2} h^2 \abs{1 + \ell + k}^{-1} \ .
    \end{multline*}
    The triangular inequality gives the result.
\end{proof}

For a set $A \subseteq \N$, let $\Pi_{A}$ be the spectral eigenprojector onto the subspace $\oplus_{\ell \in A} \ker(\Lap {}- \lambda_{\ell}^2)$, and for $s \in \R$, $r \geq 0$, let $I(s, r) \coloneqq [s - r, s + r] \cap \N$. For example, for $h_n > 0$ and $\ell_n \in \N$ as defined above, and $\Upsilon \in \N$,
\begin{equation} \label{Eq:example-cutoff_Upsilon-Ghgamma}
    \Pi_{I(2\ell_n, 2\Upsilon)} G_{h_n}^{\gamma} = \sum_{\abs{k} \leq \Upsilon} \frac{1}{ \lambda_{2(\ell_n + k)}^2 - h_n^{-2} } Y_{2(\ell_n + k)}^{\gamma} \ .
\end{equation}

\begin{Prop} \label{Prop:Ghgamma_tails-L2norm-estimate}
    Let $(h_n)_{n \in \N} \subseteq (0,1)$ be such that $h_n^{-2} \notin \Speceven(\Lap)$, and for every $n \in \N$, let $\ell_n \in \N$ and $\sigma_n \in (0,1)$ be defined by \eqref{Eq:def-elln+sigman-gamma}. Assume that $\lim_{n \to \infty} h_n = 0$. There exists $C > 1$ and $h_0 > 0$ such that for all integers $\Upsilon \geq 1$, and all $0 < h_n < h_0$
    \begin{equation*} 
        \norm*{G_{h_n}^{\gamma} - \Pi_{I(2(\ell_n + \sigma_n), 2\Upsilon)} G_{h_n}^{\gamma}}[L^2(\S[2])]^{2} \leq C \frac{(h_n)^{2}}{\Upsilon} \ .
    \end{equation*}
\end{Prop}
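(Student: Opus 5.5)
The plan is to use orthogonality of the eigenspaces to reduce the estimate to a scalar series, and then bound that series term by term. Since the $Y_{2\ell}^{\gamma}$ lie in mutually orthogonal eigenspaces of $\Lap$, and the odd ones vanish by Lemma \ref{Lemma:relation_Yellgamma-Zellq}, the tail splits as a sum of squared norms. Writing $\ell = \ell_n + k$ with $k \ge -\ell_n$, the index $2(\ell_n+k)$ lies outside $I(2(\ell_n+\sigma_n), 2\Upsilon)$ exactly when $\abs{k - \sigma_n} > \Upsilon$. Hence
\[
\norm*{G_{h_n}^{\gamma} - \Pi_{I(2(\ell_n + \sigma_n), 2\Upsilon)} G_{h_n}^{\gamma}}[L^2(\S[2])]^{2} = \sum_{\substack{k \geq -\ell_n \\ \abs{k - \sigma_n} > \Upsilon}} \frac{\norm*{Y_{2(\ell_n + k)}^{\gamma}}[L^2(\S[2])]^2}{\big(\lambda_{2(\ell_n+k)}^2 - h_n^{-2}\big)^2} .
\]
The whole task is to show that this scalar series is $\BigO(h_n^2/\Upsilon)$, uniformly in $\Upsilon \ge 1$ and in $n$ once $h_n$ is small.

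For the numerators I will not need any fine asymptotics. Lemma \ref{Lemma:norm-Yellgamma-asymptotic} gives the uniform bound $\norm{Y_{2\ell}^{\gamma}}^2 \le C$ for all $\ell$.

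For the denominators, I will use the exact identity from the proof of Lemma \ref{Lemma:lambdaell+k-asymptotic-gamma}:
\[
\lambda_{2(\ell_n+k)}^2 - h_n^{-2} = 2(k - \sigma_n)\,[4\ell_n + 2k + 2\sigma_n + 1] .
\]
When $k \ge -\ell_n$, the bracket equals $2(\ell_n+k) + 2(\ell_n+\sigma_n) + 1$, which is at least $2(\ell_n + \sigma_n) + 1$. By Lemma \ref{Lemma:hn-elln-approximation-gamma}, for $h_n$ small this quantity is $\ge \tfrac12 h_n^{-1}$. Therefore each term is bounded by
\[
\frac{C}{(k-\sigma_n)^2 \, h_n^{-2}} = C\,\frac{h_n^2}{(k - \sigma_n)^2} .
\]
This bound is uniform in $k$. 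Unlike the one in Lemma \ref{Lemma:asymptotic-summand-gamma}, it does not degrade for very negative $k$, and it behaves even better for large positive $k$, where the bracket grows like $k$.

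Summing over all integers $k$ with $\abs{k - \sigma_n} > \Upsilon$ gives at most $C h_n^2 \sum_{\abs{k - \sigma_n} > \Upsilon} (k-\sigma_n)^{-2}$. Since $\sigma_n \in (0,1)$, the points $\abs{k - \sigma_n}$ on each side of $\sigma_n$ are spaced by $1$ and all exceed $\Upsilon$. Comparing with an integral, each side contributes at most $\Upsilon^{-2} + \int_{\Upsilon}^{\infty} t^{-2}\,\D{t} \le 2/\Upsilon$, which yields the bound $C h_n^2/\Upsilon$.

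The only delicate point is securing the lower bound on the bracket uniformly over all $k \ge -\ell_n$, including the $k$ near $-\ell_n$, that is, the low frequencies. This is handled by the identity above: the bracket is increasing in $k$ and already of size $h_n^{-1}$ at $k = -\ell_n$. The constant $h_0$ is then chosen so that Lemma \ref{Lemma:hn-elln-approximation-gamma} gives $2(\ell_n+\sigma_n)+1 \ge \tfrac12 h_n^{-1}$ for $h_n < h_0$.
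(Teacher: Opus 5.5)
Your proof is correct. It follows the paper's reduction: orthogonality, the index set $k\le-\Upsilon$ or $k\ge\Upsilon+1$, then summing a termwise bound of order $h_n^2k^{-2}$. Where it departs from the paper is in how you bound each summand.

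\textbf{How the two routes differ.} The paper obtains the termwise bound from Lemma~\ref{Lemma:asymptotic-summand-gamma}, comparing each summand with the model term $\frac{1}{16\pi^2(k-\sigma_n)^2(h_n^{-1}+k-\sigma_n)^2}$ plus an error. You instead use the exact factorization $\lambda_{2(\ell_n+k)}^2-h_n^{-2}=2(k-\sigma_n)\,[2(\ell_n+k)+2(\ell_n+\sigma_n)+1]$ together with a crude lower bound on the bracket. You do not even need $h_0$: the identity $h_n^{-2}=(2(\ell_n+\sigma_n)+\tfrac12)^2-\tfrac14$ gives $2(\ell_n+\sigma_n)+1>h_n^{-1}$ for every $n$.

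\textbf{What your route buys.} It is more elementary, and it gives $Ch_n^2(k-\sigma_n)^{-2}$ uniformly in $k\ge-\ell_n$. This uniformity matters. The error term in Lemma~\ref{Lemma:asymptotic-summand-gamma} contains $Ch_n^5\abs{k-\sigma_n}^{-1}$, which exceeds $h_n^2(k-1)^{-2}$ once $k\gtrsim h_n^{-3}$ and is not summable over $k\ge\Upsilon+1$. So the paper's deduction of the bound $Ch_n^2(k-1)^{-2}$ for all $k\ge\Upsilon+1$ does not literally follow from that lemma as stated. Repairing it would mean keeping the factor $(h_n^{-1}+k-\sigma_n)^{-4}$ in the lemma's denominator estimate instead of replacing it by $h_n^{4}$. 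Your argument has no such issue. The paper's route only buys economy, since it reuses a lemma needed anyway for Proposition~\ref{Prop:Ghgamma_mainterm-L2norm}.

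\textbf{A correction to your side remark.} The weak spot of Lemma~\ref{Lemma:asymptotic-summand-gamma} is large positive $k$, not very negative $k$. For $-\ell_n\le k\le-\Upsilon$ one has $\abs{k}\le\ell_n<h_n^{-1}$, and there its error terms are harmless.
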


\begin{proof}
    Note that since $\sigma_n \in (0,1)$ for all $n \in \N$, then
    \[
    I(2(\ell_n + \sigma_n), 2\Upsilon) \cap 2\N = \{ 2(\ell_n + k) \in \N \colon -(\Upsilon - 1) \leq k \leq \Upsilon \} \ .
    \]
    Therefore, we have to estimate
    \[
    \norm*{G_{h_n}^{\gamma} - \Pi_{I(2(\ell_n + \sigma_n), 2\Upsilon)} G_{h_n}^{\gamma}}[L^2(\S[2])]^{2} = \bigg[ \sum_{k = -\ell_n}^{-\Upsilon} + \sum_{k = \Upsilon + 1}^{\infty} \bigg] \frac{1}{ \abs*{\lambda_{2(\ell_n + k)}^2 - h_n^{-2}}^2 } \norm*{Y_{2(\ell_n + k)}^{\gamma}}[L^2(\S[2])]^2
    \]
    as $n \to \infty$. For that, we invoke Lemma \ref{Lemma:asymptotic-summand-gamma} for $-\ell_n \leq k \leq -\Upsilon$ and $k \geq \Upsilon + 1$. First, we see that for all $0 < h_n < h_0$
    \begin{align*}
        \hspace{30pt} \abs*{ \frac{1}{16(k - \sigma_n)^2} \frac{1}{ (h_n^{-1} + k - \sigma_n)^2} \frac{1}{\pi^2} } & \leq C \frac{h_n^2}{k^2} \ , && \text{if $-\ell_n \leq k \leq - \Upsilon$,} \hspace{30pt}\\
        \abs*{ \frac{1}{16(k - \sigma_n)^2} \frac{1}{ (h_n^{-1} + k - \sigma_n)^2} \frac{1}{\pi^2} } & \leq C \frac{h_n^2}{(k - 1)^2} \ , && \text{if $k \geq \Upsilon + 1$.}
    \end{align*}
    Consequently, Lemma \ref{Lemma:asymptotic-summand-gamma} implies that for all $0 < h_n < h_0$,
    \begin{align*}
        \hspace{20pt} \frac{1}{\abs*{\lambda_{2(\ell_n + k)}^2 - h_{n}^{-2}}^2 } \norm*{Y_{2(\ell_n + k)}^{\gamma}}[L^2(\S[2])]^2 & \leq C \frac{1}{\abs{k}^2} (h_n)^2 \ , && \text{if $-\ell_n \leq k \leq -\Upsilon$,} \hspace{20pt} \\
        \frac{1}{\abs*{\lambda_{2(\ell_n + k)}^2 - h_{n}^{-2}}^2 } \norm*{Y_{2(\ell_n + k)}^{\gamma}}[L^2(\S[2])]^2 & \leq C \frac{1}{\abs{k - 1}^2} (h_n)^2 \ , && \text{if $k \geq \Upsilon + 1$.}
    \end{align*}
    Therefore,
    \[
    \norm*{G_{h_n}^{\gamma} - \Pi_{I(2(\ell_n + \sigma_n), 2\Upsilon)} G_{h_n}^{\gamma}}[L^2(\S[2])]^{2} \leq C \bigg( \sum_{k \geq \Upsilon} \frac{1}{k^2} \bigg) (h_n)^2 \leq C \frac{(h_n)^2}{\Upsilon} \ . \qedhere
    \]
\end{proof}

\begin{Prop} \label{Prop:Ghgamma_mainterm-L2norm}
    Let $(h_n)_{n \in \N} \subseteq (0,1)$ be such that $h_n^{-2} \notin \Speceven(\Lap)$, and for every $n \in \N$, let $\ell_n \in \N$ and $\sigma_n \in (0,1)$ be defined by \eqref{Eq:def-elln+sigman-gamma}. Assume that $\lim_{n \to \infty} h_n = 0$ and that $\lim_{n \to \infty} \sigma_n = \sigma \in [0,1]$.
     \begin{enumerate}
         \item If $\sigma \in (0,1)$, set
         \begin{equation} \label{Eq:Csigma-definition-rep}
             C_{\sigma} \coloneqq \bigg( \sum_{k \in \Z} \frac{1}{(k - \sigma)^2} \bigg)^{\frac{1}{2}} \ .
         \end{equation}
         There exists $C > 1$ such that for every $\Upsilon \geq 1$ there exists $N \in \N$ such that for all $n \geq N$,
         \begin{equation} \label{Eq:Ghgamma_mainterm-L2norm-sigma(01)}
             \abs*{ \norm*{\Pi_{I(2(\ell_n + \sigma), 2\Upsilon)} G_{h_n}^{\gamma} }[L^2(\S[2])]^2 - \frac{(C_{\sigma})^2}{16\pi^2} (h_n)^2  } \leq C (h_n)^2 \Big[\Upsilon h_n + \abs{\sigma_n - \sigma} + \Upsilon^{-1} \Big] \ .
         \end{equation}
         \item If $\sigma \in \{0, 1\}$, there exists $C > 1$ and $N \in \N$ such that for all $n \geq N$,
         \begin{equation} \label{Eq:Ghgamma_mainterm-L2norm-sigma0.1}
             \abs*{ \norm*{\Pi_{I(2(\ell_n + \sigma), 0)} G_{h_n}^{\gamma}}[L^2(\S[2])]^2 - \frac{1}{16\pi^2} \frac{(h_n)^2}{\abs{\sigma_n - \sigma}^2} } \leq C \frac{(h_n)^3}{\abs{\sigma_n - \sigma}^2 } \ .
         \end{equation}
     \end{enumerate}
\end{Prop}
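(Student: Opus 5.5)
The proof will be a direct summation of the termwise asymptotics in Lemma \ref{Lemma:asymptotic-summand-gamma} over the finitely many indices kept by the spectral projector. By orthogonality of the eigenspaces,
\[
\norm*{\Pi_{I(2(\ell_n + \sigma), 2\Upsilon)} G_{h_n}^{\gamma}}[L^2(\S[2])]^2 = \sum_{k \in K_\Upsilon} \frac{1}{\abs*{\lambda_{2(\ell_n + k)}^2 - h_n^{-2}}^2} \norm*{Y_{2(\ell_n+k)}^{\gamma}}[L^2(\S[2])]^2 ,
\]
where $K_\Upsilon$ is the set of integers $k$ with $2(\ell_n+k) \in I(2(\ell_n+\sigma), 2\Upsilon)$, i.e. essentially $\abs{k-\sigma}\le \Upsilon$. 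For $n$ large, $\ell_n \gg \Upsilon$, so every $k\in K_\Upsilon$ satisfies $k\geq -\ell_n$ and Lemma \ref{Lemma:asymptotic-summand-gamma} applies with $(\ell,\sigma)=(\ell_n,\sigma_n)$.

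For case (1), $\sigma\in(0,1)$, I will replace each summand by $\frac{1}{16\pi^2}\frac{1}{(k-\sigma_n)^2}\frac{1}{(h_n^{-1}+k-\sigma_n)^2}$. The error per term is $\frac{C}{\abs{k-\sigma_n}^2}[h_n^2\ell_n^{-1} + h_n^4 + h_n^5\Upsilon]$. Since $\ell_n^{-1}\lesssim h_n$ and $\abs{k-\sigma_n}$ is bounded below uniformly once $\sigma_n$ is close to $\sigma\in(0,1)$, summing over $k$ costs at most $C h_n^3$.

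Next I write $(h_n^{-1}+k-\sigma_n)^{-2} = h_n^2(1+h_n(k-\sigma_n))^{-2} = h_n^2(1+\BigO(\Upsilon h_n))$, uniformly for $\abs{k}\le\Upsilon+1$. This turns the main sum into $\frac{h_n^2}{16\pi^2}\sum_{K_\Upsilon}(k-\sigma_n)^{-2}$, with a relative error of $\BigO(\Upsilon h_n)$ times the bounded constant $\sum (k-\sigma_n)^{-2}\le C$.

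Then I replace $\sigma_n$ by $\sigma$ in $\sum_{K_\Upsilon}(k-\sigma_n)^{-2}$. The derivative $2\sum\abs{k-\sigma}^{-3}$ is bounded uniformly in $\Upsilon$, giving an error $C\abs{\sigma_n-\sigma}$. Finally, I complete the sum to all of $\Z$; the tail $\sum_{\abs{k-\sigma}>\Upsilon}(k-\sigma)^{-2}\le C\Upsilon^{-1}$ yields exactly $C_\sigma^2$ up to $\BigO(\Upsilon^{-1})$. Multiplying through by $h_n^2$ gives \eqref{Eq:Ghgamma_mainterm-L2norm-sigma(01)}. The choice of $N$ depending on $\Upsilon$ is only needed so that $\ell_n>\Upsilon$ and $\sigma_n$ stays in a compact subinterval of $(0,1)$.

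For case (2), the projector $\Pi_{I(2(\ell_n+\sigma),0)}$ keeps the single index $k=\sigma\in\{0,1\}$. Lemma \ref{Lemma:asymptotic-summand-gamma} with $k=\sigma$ gives the main term $\frac{1}{16\pi^2}\frac{1}{(\sigma-\sigma_n)^2}\frac{1}{(h_n^{-1}+\sigma-\sigma_n)^2}$, with error $\frac{C}{\abs{\sigma-\sigma_n}^2}[h_n^3+h_n^4]$, using $\ell_n^{-1}\lesssim h_n$. Expanding $(h_n^{-1}+\sigma-\sigma_n)^{-2}=h_n^2+\BigO(h_n^3)$ gives \eqref{Eq:Ghgamma_mainterm-L2norm-sigma0.1}.

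The main obstacle is bookkeeping rather than any real difficulty. I must make sure that the lower bound on $\abs{k-\sigma_n}$ (or, in case (2), the singular factor $\abs{\sigma_n-\sigma}^{-2}$) is handled uniformly. I must also check that the $\ell_n^{-1}$ error from Lemma \ref{Lemma:norm-Yellgamma-asymptotic} is indeed of order $h_n$, which follows from Lemma \ref{Lemma:hn-elln-approximation-gamma}, since $h_n^{-1}\approx 2\ell_n$.
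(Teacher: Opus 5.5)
Your proposal is correct and follows essentially the same route as the paper. You expand the projected norm as a finite sum over $-\Upsilon+1\le k\le\Upsilon$ (resp.\ the single index $k=\sigma$) and apply Lemma \ref{Lemma:asymptotic-summand-gamma} termwise, using $\ell_n^{-1}\lesssim h_n$ to turn the $h_n^2\lvert 1+\ell_n+k\rvert^{-1}$ error into $h_n^3$, a step the paper leaves implicit. You then replace $(h_n^{-1}+k-\sigma_n)^{-2}$ by $h_n^2$ and $\sigma_n$ by $\sigma$ (the paper does these two swaps in the other order), and complete the sum to $C_\sigma^2$ with an $\mathcal{O}(\Upsilon^{-1})$ tail taken over all $\lvert k-\sigma\rvert>\Upsilon$, which is the correct bookkeeping.
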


\begin{proof}
    \textbf{(1)} First assume that $\sigma \in (0,1)$. We have
    \[
    \norm*{\Pi_{I(2(\ell_n + \sigma), 2\Upsilon)} G_{h_n}^{\gamma} }[L^2(\S[2])]^2 = \sum_{k = -\Upsilon + 1}^{\Upsilon} \frac{1}{\abs*{\lambda_{2(\ell_n + k)}^2 - h_{n}^{-2}}^2 } \norm*{Y_{2(\ell_n + k)}^{\gamma}}[L^2(\S[2])]^2
    \]
    therefore,
    \begin{align*}
        & \abs*{ \norm*{\Pi_{I(2(\ell_n + \sigma), 2\Upsilon)} G_{h_n}^{\gamma} }[L^2(\S[2])]^2 - \frac{(C_{\sigma})^2}{16\pi^2} (h_n)^2 } \\
        & \hspace{90pt} \leq \sum_{k = -\Upsilon + 1}^{\Upsilon} \abs[\Bigg]{ \frac{1}{\abs*{\lambda_{2(\ell_n + k)}^2 - h_{n}^{-2}}^2 } \norm*{Y_{2(\ell_n + k)}^{\gamma}}[L^2(\S[2])]^2 - \frac{(h_n)^2}{16 (k - \sigma)^2} \frac{1}{\pi^2} } \\
        & \hspace{120pt} + \frac{(h_n)^2}{16 \pi^2} \bigg[ \sum_{k = - \ell_n}^{-\Upsilon} + \sum_{k = \Upsilon + 1}^{\infty} \bigg] \frac{1}{\abs{k - \sigma}^2} \\
        & \hspace{90pt} \eqqcolon \sum_{k = - \Upsilon + 1}^{\Upsilon} A_k + B_{\Upsilon} \ .
    \end{align*}
    It is immediate to prove that for all $\Upsilon \geq 1$,
    \[
    B_{\Upsilon} \leq C \frac{(h_n)^2}{\Upsilon} \ .
    \]

    To estimate $A_{k}$ for $-\Upsilon + 1 \leq k \leq \Upsilon$, we combine triangular inequality with Lemma \ref{Lemma:asymptotic-summand-gamma}: there exists $C > 0$ and $h_0 > 0$ such that for all $0 < h_n < h_0$ and $-\Upsilon + 1 \leq k \leq \Upsilon$,
    \[
    A_{k} \leq \frac{C}{\abs{k - \sigma_n}^2} \Big[ (h_n)^3 + \Upsilon (h_n)^{5} \Big] + \frac{1}{16 \pi^2} \abs*{ \frac{1}{\abs{k - \sigma_n}^2} \frac{1}{\abs{h_n^{-1} + k - \sigma_n}^2} - \frac{(h_n)^2}{\abs{k - \sigma}^2} } \ .
    \]
    We estimate the second term invoking triangular inequality once again, adding and subtracting
    \[
    \frac{1}{(k - \sigma)^2} \frac{1}{\abs{h_n^{-1} + k - \sigma_n}^2} \ . 
    \]
    Take $N \in \N$ such that $h_n < h_0$ and $\abs{\sigma_n - \sigma} \leq \frac{1}{2} d(\sigma, \Z)$ for all $n \geq N$. One finds
    \begin{equation} \label{Eq:aux-inequlity-sigma-sigman}
        \begin{aligned}
            \abs*{ \frac{1}{(k - \sigma_n)^2} - \frac{1}{(k - \sigma)^2} } & = \frac{1}{(k - \sigma)^2} \abs*{ \frac{1}{(1 + \frac{\sigma - \sigma_n}{k - \sigma})^2 } - 1 } = \frac{1}{(k - \sigma)^2} \frac{\abs*{\frac{\sigma - \sigma_n}{k - \sigma}} \abs*{2 + \frac{\sigma - \sigma_n}{k - \sigma} } }{ (1 + \frac{\sigma - \sigma_n}{k - \sigma})^2 } \\
            & \leq 10 \frac{\abs{\sigma - \sigma_n}}{\abs{k - \sigma}^3} \ ,
        \end{aligned}
    \end{equation}
    thus, triangular inequality and \eqref{Eq:aux-inequality} gives
    \[
    \abs*{ \frac{1}{\abs{k - \sigma_n}^2} \frac{1}{\abs{h_n^{-1} + k - \sigma_n}^2} - \frac{(h_n)^2}{\abs{k - \sigma}^2} } \leq 10 \frac{\abs{\sigma - \sigma_n}}{\abs{k - \sigma}^3} (h_n)^2 + \frac{1}{(k - \sigma)^2} \abs*{ \frac{1}{\abs{h_n^{-1} + k - \sigma_n}^2} - (h_n)^{2} } \ .
    \]
    Lastly, since $-\Upsilon + 1 \leq k \leq \Upsilon$, one finds
    \[
    \abs*{ \frac{1}{\abs{h_n^{-1} + k - \sigma_n}^2} - (h_n)^{2} } \leq C \Upsilon (h_n)^3 \ .
    \]
    Putting everything together, we have
    \[
    A_{k} \leq C (h_n)^2 \Big[ \frac{\abs{\sigma - \sigma_n}}{\abs{k - \sigma}^3} + \frac{h_n \Upsilon}{\abs{k - \sigma}^2} \Big] \ ,
    \]
    where we applied \eqref{Eq:aux-inequlity-sigma-sigman} to change $\abs{k - \sigma_n}$ for $\abs{k - \sigma}$ in the denominator when needed. Combining the estimates for $A_{k}$ and $B_{\Upsilon}$, and using that
    \[
    \sum_{k \in \Z} \frac{1}{\abs{k - \sigma}^2} < \infty \qquad \text{and} \qquad \sum_{k \in \Z} \frac{1}{\abs{k - \sigma}^3} < \infty \ ,
    \]
    we obtain \eqref{Eq:Ghgamma_mainterm-L2norm-sigma(01)}.

    \textbf{(2)} Let us assume now that $\sigma \in \{0,1\}$. We have
    \[
    \norm*{\Pi_{I(2(\ell_n + \sigma), 0)} G_{h_n}^{\gamma}}[L^2(\S[2])]^2 = \frac{1}{\abs*{ \lambda_{2(\ell_n + \sigma)}^2 - (h_n)^{-2} }^2 } \norm*{Y_{2(\ell_n + \sigma)}^{\gamma}}[L^2(\S[2])]^2 \ .
    \]
    A direct application of Lemma \ref{Lemma:asymptotic-summand-gamma} for $k = \sigma$ and triangular inequality gives
    \begin{multline*}
        \abs*{ \norm*{\Pi_{I(2(\ell_n + \sigma), 0)} G_{h_n}^{\gamma}}[L^2(\S[2])]^2 - \frac{1}{16\pi^2} \frac{(h_n)^2}{\abs{\sigma_n - \sigma}^2} } \\
        \leq \frac{C}{\abs{\sigma - \sigma_n}^2} (h_n)^3 + \frac{1}{16\pi^2 \abs{\sigma - \sigma_n}^2 } \abs*{ \frac{1}{\abs{h_n^{-1} + \sigma - \sigma_n}^2} - (h_n)^2 } \ .
    \end{multline*}
    Since $\sigma_n, \sigma \in [0,1]$, we have
    \[
    \abs*{ \frac{1}{\abs{h_n^{-1} + \sigma - \sigma_n}^2} - (h_n)^2 } \leq C (h_n)^3 \ ,
    \]
    hence, we have shown that
    \[
    \abs*{ \norm*{\Pi_{I(2(\ell_n + \sigma), 0)} G_{h_n}^{\gamma}}[L^2(\S[2])]^2 - \frac{1}{16\pi^2} \frac{(h_n)^2}{\abs{\sigma_n - \sigma}^2} } \leq C \frac{(h_n)^3}{\abs{\sigma - \sigma_n}^2} \ . \qedhere
    \]
\end{proof}

\begin{proof}[Proof of Theorem \texorpdfstring{\ref{Thm:quasimodes-Ghgamma}}{quasimodes ghgamma}]
    \textbf{(1)} Assume that $\sigma \in (0,1)$ and let $\Upsilon \geq 1$. Note that $I(2(\ell_n + \sigma_n), 2\Upsilon) = I(2(\ell_n + \sigma), 2\Upsilon)$. As a first approximation for $g_{h_n}^{\gamma}$ we have
    \begin{equation} \label{Eq:ghngamma-cutoffapprox}
        \norm*{g_{h_n}^{\gamma} - \Pi_{I(2(\ell_n + \sigma), 2\Upsilon)} g_{h_n}^{\gamma} }[L^2(\S[2])] \leq C \Upsilon^{-\frac{1}{2}} \ ,
    \end{equation}
    thanks to Propositions \ref{Prop:Ghgamma_tails-L2norm-estimate} and \ref{Prop:Ghgamma_mainterm-L2norm}. We use Pythagorean theorem to estimate the remaining term in the approximation,
    \begin{multline*}
        \norm*{\Pi_{I(2(\ell_n + \sigma), 2\Upsilon)} g_{h_n}^{\gamma} - \frac{1}{C_{\sigma}} \sum_{\abs{k - \sigma} \leq \Upsilon} \frac{1}{k - \sigma} y_{2(\ell_n + k)}^{\gamma} }[L^2(\S[2])] \\
        = \Bigg( \sum_{\abs{k - \sigma} \leq \Upsilon} \abs[\Bigg]{ \frac{\norm[\big]{Y_{2(\ell_n + k)}^{\gamma}}[L^2(\S[2])] }{\norm[\big]{G_{h_n}^{\gamma}}[L^2(\S[2])] (\lambda_{2(\ell_n + k)}^2 - h_n^{-2})} - \frac{1}{C_{\sigma}} \frac{1}{k - \sigma} }^2 \Bigg)^{\frac{1}{2}} \ .
    \end{multline*}
    In order to upper bound
    \[
    \abs[\Bigg]{ \frac{\norm[\big]{Y_{2(\ell_n + k)}^{\gamma}}[L^2(\S[2])] }{\norm[\big]{G_{h_n}^{\gamma}}[L^2(\S[2])] (\lambda_{2(\ell_n + k)}^2 - h_n^{-2})} - \frac{1}{C_{\sigma}} \frac{1}{k - \sigma} }
    \]
    for $\abs{k - \sigma} \leq \Upsilon$ we are going to make repeated use of triangular inequality, making partial approximations on each step, until we get the simplified term $\frac{1}{C_{\sigma}} \frac{1}{k - \sigma}$. First, thanks to Lemma \ref{Lemma:asymptotic-summand-gamma} and Proposition \ref{Prop:Ghgamma_mainterm-L2norm} we have
    \[
    \abs[\Bigg]{ \frac{\norm[\big]{Y_{2(\ell_n + k)}^{\gamma}}[L^2(\S[2])] }{\norm[\big]{G_{h_n}^{\gamma}}[L^2(\S[2])] (\lambda_{2(\ell_n + k)}^2 - h_n^{-2})} - \frac{1}{\norm{G_{h_n}^{\gamma}}[L^2(\S[2])]} \frac{1}{4\pi (k - \sigma_n)} \frac{1}{\abs{h_n^{-1} + k - \sigma_n}} } \leq \frac{C}{\abs{k - \sigma_n}} h_n \ .
    \]
    Since $\sigma_n \to \sigma$ and $\abs{k - \sigma} \leq \Upsilon$, and thanks to Proposition \ref{Prop:Ghgamma_mainterm-L2norm}, we obtain
    \[
    \abs*{\frac{1}{\norm{G_{h_n}^{\gamma}}[L^2(\S[2])]} \frac{1}{4\pi (k - \sigma_n)} \frac{1}{\abs{h_n^{-1} + k - \sigma_n}} - \frac{1}{\norm{G_{h_n}^{\gamma}}[L^2(\S[2])]} \frac{ h_n }{4\pi (k - \sigma)} } \leq \frac{C}{\abs{k - \sigma}} \Big[ \abs{\sigma_n - \sigma} + \Upsilon h_n \Big] \ .
    \]
    Lastly, thanks to Proposition \ref{Prop:Ghgamma_mainterm-L2norm} we get
    \[
    \abs*{ \frac{1}{\norm{G_{h_n}^{\gamma}}[L^2(\S[2])]} \frac{ h_n }{4\pi (k - \sigma)} - \frac{1}{C_{\sigma}} \frac{1}{k - \sigma} } \leq \frac{C}{\abs{k - \sigma} } \Big[ \Upsilon h_n + \abs{\sigma_n - \sigma} + \Upsilon^{-1} \Big] \ .
    \]
    Putting everything together,
    \[
    \abs[\Bigg]{ \frac{\norm[\big]{Y_{2(\ell_n + k)}^{\gamma}}[L^2(\S[2])] }{\norm[\big]{G_{h_n}^{\gamma}}[L^2(\S[2])] (\lambda_{2(\ell_n + k)}^2 - h_n^{-2})} - \frac{1}{C_{\sigma}} \frac{1}{k - \sigma} } \leq \frac{C}{\abs{k - \sigma} } \Big[ \Upsilon h_n + \abs{\sigma_n - \sigma} + \Upsilon^{-1} \Big] \ ,
    \]
    hence, using that $\sum_{k \in \Z} \frac{1}{\abs{k - \sigma}^2} < \infty$,
    \begin{equation} \label{Eq:ghgamma-finalapprox}
        \norm*{\Pi_{I(2(\ell_n + \sigma), 2\Upsilon)} g_{h_n}^{\gamma} - \frac{1}{C_{\sigma}} \sum_{\abs{k - \sigma} \leq \Upsilon} \frac{1}{k - \sigma} y_{2(\ell_n + k)}^{\gamma} }[L^2(\S[2])] \leq C \Big[ \Upsilon h_n + \abs{\sigma_n - \sigma} + \Upsilon^{-1} \Big] \ .
    \end{equation}
    Combining \eqref{Eq:ghngamma-cutoffapprox} and \eqref{Eq:ghgamma-finalapprox}, we have proved the desired estimate in the case $\sigma \in (0,1)$.

    \textbf{(2)} Assume that $\sigma \in \{0,1\}$. Thanks to Proposition \ref{Prop:Ghgamma_tails-L2norm-estimate} with $\Upsilon = 1$ and Lemma \ref{Lemma:asymptotic-summand-gamma} for $k = 1 - \sigma$, in combination with Proposition \ref{Prop:Ghgamma_mainterm-L2norm}, we obtain for large enough $n \in \N$
    \begin{equation} \label{Eq:aux3602}
        \norm*{g_{h_n}^{\gamma} - \Pi_{I(2(\ell_n + \sigma), 0)} g_{h_n}^{\gamma} }[L^2(\S[2])] \leq C \abs{\sigma - \sigma_n} \ .
    \end{equation}
    In addition, using that $(-1)^{1 - \sigma} = \frac{\abs{\sigma - \sigma_n}}{\sigma - \sigma_n}$ for all $n \in \N$,
    \[
    \norm*{ \Pi_{I(2(\ell_n + \sigma), 0)} g_{h_n}^{\gamma} - (-1)^{1 - \sigma} y_{2(\ell_n + \sigma)}^{\gamma} }[L^2(\S[2])] = \abs*{ \frac{\norm[\big]{Y_{2(\ell_n + \sigma)}^{\gamma}}[L^2(\S[2])] }{\norm[\big]{G_{h_n}^{\gamma}}[L^2(\S[2])] (\lambda_{2(\ell_n + \sigma)}^2 - h_n^{-2})} - \frac{\abs{\sigma - \sigma_n}}{\sigma - \sigma_n} } \ .
    \]
    Adding and subtracting
    \[
    \frac{1}{\norm[\big]{G_{h_n}^{\gamma}}[L^2(\S[2])]} \frac{h_n}{4\pi (\sigma - \sigma_n)}
    \]
    and applying triangular inequality, we find that
    \begin{align*}
    \abs*{ \frac{\norm[\big]{Y_{2(\ell_n + \sigma)}^{\gamma}}[L^2(\S[2])] }{\norm[\big]{G_{h_n}^{\gamma}}[L^2(\S[2])] (\lambda_{2(\ell_n + \sigma)}^2 - h_n^{-2})} - \frac{\abs{\sigma - \sigma_n}}{\sigma - \sigma_n} } & \leq \frac{1}{\norm[\big]{G_{h_n}^{\gamma}}[L^2(\S[2])]} \frac{C (h_n)^2}{\abs{\sigma - \sigma_n}} \\
    & \qquad + \frac{h_n}{4\pi \abs{\sigma - \sigma_n}} \abs*{ \frac{1}{\norm[\big]{G_{h_n}^{\gamma}}[L^2(\S[2])]} - \frac{4\pi \abs{\sigma - \sigma_n}}{h_n}}
    \end{align*}
    thanks to Lemma \ref{Lemma:asymptotic-summand-gamma} for $k = \sigma$. Now, \eqref{Eq:Ghgamma_mainterm-L2norm-sigma0.1} gives
    \begin{equation} \label{Eq:aux4629}
        \abs*{ \frac{\norm[\big]{Y_{2(\ell_n + \sigma)}^{\gamma}}[L^2(\S[2])] }{\norm[\big]{G_{h_n}^{\gamma}}[L^2(\S[2])] (\lambda_{2(\ell_n + \sigma)}^2 - h_n^{-2})} - \frac{\abs{\sigma - \sigma_n}}{\sigma - \sigma_n} } \leq C \Big[ h_n + \abs{\sigma - \sigma_n} \Big] \ .
    \end{equation}
    Combining estimates \eqref{Eq:aux3602} and \eqref{Eq:aux4629} we obtain the desired estimate in the case $\sigma \in \{0, 1\}$.
\end{proof}

\section{Proofs of main theorems}
\label{Sec:Proofs_maintheorems}

In this section, we state and prove the results related to the semiclassical measures of high-energy eigenfunctions of singular perturbations of $\Lap$ by $\delta_{\gamma}$. In the introduction, these were denoted by
\[
\Lap + \alpha \ket{\delta_{\gamma}}\bra{\delta_{\gamma}} \ , \qquad \alpha \in \R \ ,
\]
but we saw in Proposition \ref{Prop:family-perturbation-Lap-gamma} that these are parameterized instead by $\theta \in \R / (\pi \Z)$, and we denote them by $\Lap_{\gamma, \theta}$. Theorem \ref{Thm:maintheorem-invariantSDM} is a consequence of Theorem \ref{Thm:equal-set_SDM}, and Theorem \ref{Thm:maintheorem-QuantumLimit} is a consequence of Theorem \ref{Thm:end-theorem-non-invariance} and Lemma \ref{Lemma:projection_into_QL}.

Recall the definition of $Y_{\ell}^{\gamma}$ \eqref{Eq:def-Yellgamma}, $Z_{\ell}^{q}$ \eqref{Eq:def-Zellq}, and $G_{h}^{\gamma}$ \eqref{Eq:def-Ghgamma}. In this section, we will work mainly with their normalized versions; those are
\begin{equation} \label{Eq:def-normalized-notable-functions}
    y_{2\ell}^{\gamma} \coloneqq \frac{1}{\norm{Y_{2\ell}^{\gamma}}[L^2(\S[2])]} Y_{2\ell}^{\gamma} \ , \qquad z_{\ell}^{q} \coloneqq \frac{1}{\norm{Z_{\ell}^{q}}[L^2(\S[2])]} Z_{\ell}^{q} \ , \qquad g_{h}^{\gamma} \coloneqq \frac{1}{\norm*{G_{h}^{\gamma}}[L^2(\S[2])]} G_{h}^{\gamma} \ .
\end{equation}
Note that if $q$ and $\gamma$ are such that $d(q, \gamma) = \frac{\pi}{2}$, then Proposition \ref{Lemma:relation_Yellgamma-Zellq} implies that
\begin{equation} \label{Eq:y2ellgamma=(-1)elll_z2ellq}
    y_{2\ell}^{\gamma} = (-1)^{\ell} z_{2\ell}^{q} \qquad \forall \, \ell \in \N \ .
\end{equation}

For $q \in \S[2]$, let us define the probability measure $\mu_{q}$ on $T^*\S[2]$,
\begin{equation} \label{Eq:def-nuq}
     \int_{T^*\S[2]} a(x, \xi) \mu_{q}(\D{x}, \D{\xi}) \coloneqq \int_{S_{q}^*\S[2]} \int_{0}^{2\pi} a(\phi_t(q, \xi)) \frac{\D{t}}{2\pi} \frac{\D{\xi}}{2\pi} \ , \qquad \forall \, a \in \CinfK(T^*\S[2]) \ .
\end{equation}
Note that $\mu_{q} \in \mathcal{P}_{\mathrm{inv}}(S^*\S[2])$, that is, it is supported inside $S^*\S[2]$ and is invariant under geodesic flow; in fact, $\mu_q$ is the unique semiclassical measure associated to the sequence of normalized zonal harmonics on $q \in \S[2]$ (see Theorem 3.1 of \cite{Verdasco2026spheres}).

\begin{Theorem} \label{Thm:SDM-newEF_is_muq}
    Let $\gamma$ be a closed geodesic on $\S[2]$, and let $q \in \S[2]$ be such that $d(q, \gamma) = \frac{\pi}{2}$. Let $(h_n)_{n \in \N} \subseteq (0,1)$ be such that $\lim_{n \to \infty} h_n = 0$ and $(h_n)^{-2} \in \R \setminus \Speceven(\Lap)$. For every $a \in \CinfK(T^*\S[2])$,
    \[
    \lim_{n \to \infty} \ip{ g_{h_n}^{\gamma}}{ \Op[h_n]{a} g_{h_n}^{\gamma} }[L^2(\S[2])] = \int_{T^*\S[2]} a \, \mu_{q}(\D{x}, \D{\xi}) \ .
    \]
\end{Theorem}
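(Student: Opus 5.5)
The plan is to reduce $g_{h_n}^{\gamma}$ to a finite combination of normalized zonal harmonics at $q$. Then I would turn that combination into a semiclassical pseudodifferential operator applied to a single zonal harmonic, and finally compute the modulus of the resulting symbol.

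\emph{Reduction to a convergent $\sigma_n$.} The space of limits is compact and the target $\mu_{q}$ is fixed, so it suffices to prove the claim along any subsequence with $\sigma_n \to \sigma \in [0,1]$. By \eqref{Eq:y2ellgamma=(-1)elll_z2ellq}, each $y^{\gamma}_{2(\ell_n+k)}$ equals $(-1)^{\ell_n+k} z^{q}_{2(\ell_n+k)}$. The operators $\Op[h_n]{a}$ are uniformly bounded on $L^2$, so an $L^2$-error $\varepsilon$ in $g^\gamma_{h_n}$ produces an error $\BigO(\varepsilon)$ in the Wigner distribution.

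\emph{Case $\sigma\in\{0,1\}$.} Theorem \ref{Thm:quasimodes-Ghgamma}(2) gives $g_{h_n}^{\gamma} = \pm z^{q}_{2(\ell_n+\sigma)} + \littleo(1)$ in $L^2$. The only other input is that $h_n\,\lambda_{2(\ell_n+\sigma)}\to1$, which follows from Lemma \ref{Lemma:hn-elln-approximation-gamma}. The conclusion is then exactly the statement that normalized zonal harmonics have the unique semiclassical measure $\mu_{q}$ \cite[Theorem 3.1]{Verdasco2026spheres}.

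\emph{Case $\sigma\in(0,1)$.} Theorem \ref{Thm:quasimodes-Ghgamma}(1) gives
\[
g_{h_n}^{\gamma} = \frac{(-1)^{\ell_n}}{C_\sigma}\sum_{|k-\sigma|\le\Upsilon}\frac{(-1)^k}{k-\sigma}\, z^{q}_{2(\ell_n+k)} + \BigO\big(\Upsilon h_n + |\sigma_n-\sigma| + \Upsilon^{-1/2}\big).
\]
The key step is a ladder identity. Fix a cutoff $\chi$ on $T^*\S[2]$ vanishing near the fibers over $q$ and $-q$. On the support of $\chi$ the geodesics through $q$ are parametrized by a smooth phase function $t(x,\xi)$, the time elapsed since the geodesic left $q$. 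The WKB form
\[
z^q_\ell(r) \approx c\,(\sin r)^{-1/2}\cos\big((\ell+\tfrac12)r-\tfrac{\pi}{4}\big)
\]
(or the paper's ladder operators, which are $h$-$\Psi$DOs) should give
\[
\Op[h_n]{\chi}\, z^q_{2(\ell_n+k)} = \Op[h_n]{\chi\, e^{2ik t}}\, z^q_{2\ell_n} + \littleo(1) \qquad \text{for each fixed } k.
\]
Summing over $k$, with $\Upsilon$ fixed, I would then get
\[
\ip{g_{h_n}^{\gamma}}{\Op[h_n]{a\chi}\,g_{h_n}^{\gamma}} = \ip{z^q_{2\ell_n}}{\Op[h_n]{a\chi|\Gamma^\Upsilon|^2}\,z^q_{2\ell_n}} + \littleo(1) + \BigO(\Upsilon^{-1/2}),
\]
where $\Gamma^\Upsilon = C_\sigma^{-1}\sum_{|k-\sigma|\le\Upsilon}(-1)^k e^{2ikt}/(k-\sigma)$. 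To the right-hand side I apply the symbolic calculus (composition and adjoint) and \cite[Theorem 3.1]{Verdasco2026spheres}. Letting $n\to\infty$ yields $\int a\chi\,|\Gamma^\Upsilon|^2\,\D{\mu_q}$, up to $\BigO(\Upsilon^{-1/2})$.

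\emph{Computing the symbol.} The classical Fourier expansion $\sum_{k\in\Z}e^{ik s}/(k-\sigma) = \frac{\pi}{\sin\pi\sigma}\, e^{i\sigma(s-\pi)}$ for $s\in(0,2\pi)$ is applied with $s=2t+\pi$ to absorb the sign $(-1)^k$. Together with $C_\sigma^2=\pi^2/\sin^2\pi\sigma$, it shows that $\Gamma^\Upsilon\to\Gamma^\infty$ in $L^2(\D{t})$ with $|\Gamma^\infty|=1$ almost everywhere. In the coordinates $(\xi,t)$ on $S_q^*\S[2]\times[0,2\pi)$ the measure $\mu_q$ is a product of Lebesgue measures, so $\int a\chi|\Gamma^\Upsilon|^2\,\D{\mu_q}\to\int a\chi\,\D{\mu_q}$ as $\Upsilon\to\infty$. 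The remaining piece $a(1-\chi)$ is supported near the fibers over $\pm q$. Its contribution is at most $\limsup_n\ip{g}{\Op[h_n]{b}g}$ for a cutoff $b\ge0$ with $b\ge|a|(1-\chi)$. By the argument just given, this is bounded by a constant times the $\mu_q$-mass of a small neighbourhood of those fibers, which is small. Taking first $n\to\infty$, then $\Upsilon\to\infty$, then shrinking the cutoff gives the result.

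\emph{Main obstacle.} The hardest step is the ladder identity: showing rigorously that shifting the degree of a zonal harmonic by $2k$ is implemented, up to $\littleo(1)$, by the $h$-$\Psi$DO with symbol $e^{2ikt}$ away from the poles. This needs uniform WKB asymptotics for Legendre polynomials (Hilb's formula) and careful bookkeeping of the two branches $\pm$ of the radial momentum. One must check that the phase shift has the same sign $e^{+2ikt}$ on both branches when $t$ is measured along the flow, which is what makes $\Gamma$ a genuine symbol.
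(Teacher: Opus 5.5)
Your argument is correct and has the same skeleton as the paper. You extract $\sigma_n\to\sigma$ and treat $\sigma\in\{0,1\}$ exactly as the paper does. For $\sigma\in(0,1)$ you use Theorem~\ref{Thm:quasimodes-Ghgamma} and \eqref{Eq:y2ellgamma=(-1)elll_z2ellq} to pass to a finite combination of zonal harmonics, rewrite it as an $h$-$\Psi$DO applied to a single zonal harmonic, and show $\abs{\Gamma^{\Upsilon}}^2\to 1$ on the flow-out of $S_q^*\S[2]$.

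\textbf{Where you differ: the ladder step.} The paper does not use WKB. It quotes Theorem~5.1 of \cite{Verdasco2026spheres}, based on ladder operators for zonal harmonics, which gives directly $g_{h_n}^{\gamma}\approx(-1)^{\ell_n}\Op[h_n]{\Gamma^{\sigma,\Upsilon}}z^q_{2\ell_n+1}$ with the symbol \eqref{Eq:def-GammasigmaUpsilon}, a polynomial in $\kappa^q\pm i\varsigma^q$. By \eqref{Eq:kappaq-varsigmaq-flowoutq}, $\kappa^q+i\varsigma^q=e^{it}$ on the flow-out. So this is your $e^{2ikt}$ up to the harmless factor $e^{-it}$ coming from the odd base index. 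It is, however, a globally smooth symbol, smooth across the fibers over $\pm q$. The paper therefore needs no cutoff $\chi$, no Hilb asymptotics, and no separate treatment of the poles.

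\textbf{What your route buys and costs.} It is more self-contained and would work in settings where only Lagrangian WKB asymptotics are available. Your two-branch phase check is right: $e^{2ikr}$ on the outgoing branch $t=r$, and $e^{-2ikr}=e^{2ik(2\pi-r)}$ on the incoming branch $t=2\pi-r$. The cost is the extra step near $\pm q$, and there your stated justification does not work as written. The WKB argument cannot be applied to $b$, which lives exactly where WKB fails. What closes the argument is mass counting:
\begin{itemize}
\item every semiclassical measure $\mu$ of a normalized sequence satisfies $\mu(T^*\S[2])\le 1$;
\item you have shown $\mu=\mu_q$ on $T^*\S[2]\setminus(T^*_q\S[2]\cup T^*_{-q}\S[2])$;
\item $\mu_q$ already has full mass there.
\end{itemize}
Hence $\mu$ does not charge the fibers over $\pm q$.

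\textbf{A simplification.} You use $L^2$-convergence of the partial Fourier sums, so $\abs{\Gamma^{\Upsilon}}^2\to1$ in $L^1(\D{t})$. This is simpler than the paper's Carleson-plus-dominated-convergence step, which tacitly needs a uniform bound on the partial sums (available here because $\zeta^{\sigma,2}$ has bounded variation).

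\textbf{Two minor corrections.}
\begin{itemize}
\item Your phase $t$ is defined only on the flow-out $\Lambda_q=\bigcup_t\phi_t(S_q^*\S[2])$, not on an open set of phase space. So $e^{2ikt}$ must be extended smoothly off $\Lambda_q$. Any extension works, since $z^q_{2\ell_n}$ is microlocally concentrated on $\Lambda_q$; $\kappa^q+i\varsigma^q$ gives one.
\item Your Fourier identity is off by a sign: $\sum_{k}e^{iks}/(k-\sigma)=-\frac{\pi}{\sin\pi\sigma}e^{i\sigma(s-\pi)}$ on $(0,2\pi)$. This does not affect $\abs{\Gamma^{\infty}}=1$.
\end{itemize}
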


\begin{proof}
    We prove the sequence of Wigner distributions
    \[
    W_{h_n}[g_{h_n}^{\gamma}] (a) \coloneqq \ip{ g_{h_n}^{\gamma} }{\Op[h_n]{a} g_{h_n}^{\gamma} }[L^2(\S[2])] \ ,
    \]
    converges to the measure $\mu_q$ in the weak-$\star$ topology of $\Dist(T^*\S[2])$ by showing that every subsequence of $(W_{h_n}[ g_{h_n}^{\gamma} ])_{n \in \N}$ has a further subsequence that converges to $\mu_q$.
    
    Given any subsequence of Wigner distributions as described above, there exists a further subsequence, which we do not relabel, and a Radon measure $\mu$ on $T^*\S[2]$ such that (see \cite[Theorem 5.2]{Zworski2012})
    \begin{equation} \label{Eq:existance_SDM_mu}
        \lim_{n \to \infty} \ip{ g_{h_n}^{\gamma} }{\Op[h_n]{a} g_{h_n}^{\gamma} }[L^2(\S[2])] = \int_{T^*\S[2]} a \, \mu \qquad \forall \, a \in \CinfK(T^*\S[2]) \ .
    \end{equation}
    Up to extraction of a further subsequence, we may assume that $\sigma_n \to \sigma \in [0,1]$. We will show that $\mu = \mu_{q}$ under this additional convergence of the sequence $(\sigma_n)_{n \in \N}$.

    \textbf{A. Case $\sigma \in \{0, 1\}$.} Theorem \ref{Thm:quasimodes-Ghgamma}(2) says that there exists $C > 0$ and $N \in \N$ such that for all $n \geq N$,
    \[
    \norm*{ g_{h_n}^{\gamma} - (-1)^{1 - \sigma} y_{2(\ell_n + \sigma)}^{\gamma}}[L^2(\S[2])] \leq C \Big[ h_n + \abs{\sigma_n - \sigma} \Big] \ . 
    \]
    If we choose $q \in \S[2]$ such that $d(q, \gamma) = \frac{\pi}{2}$, then \eqref{Eq:y2ellgamma=(-1)elll_z2ellq} implies that for all $n \geq N$
    \[
    \norm*{ g_{h_n}^{\gamma} - (-1)^{1 + \ell_n} z_{2(\ell_n + \sigma)}^{q}}[L^2(\S[2])] \leq C \Big[ h_n + \abs{\sigma_n - \sigma} \Big] \ . 
    \]
    Consequently, we have the following approximation as $n \to \infty$:
    \[
    \ip{ g_{h_n}^{\gamma} }{\Op[h_n]{a} g_{h_n}^{\gamma} }[L^2(\S[2])] = \ip{ z_{2 (\ell_n + \sigma)}^{q} }{\Op[h_n]{a} z_{2(\ell_n + \sigma)}^{q} }[L^2(\S[2])] + \BigO(h_n) + \BigO(\abs{\sigma_n - \sigma}) \ .
    \]
    We now take limits $n \to \infty$ and invoke Theorem 3.1 from \cite{Verdasco2026spheres} to obtain
    \[
    \int_{T^*\S[2]} a \, \mu = \lim_{n \to \infty} \ip{ g_{h_n}^{\gamma} }{\Op[h_n]{a} g_{h_n}^{\gamma} }[L^2(\S[2])] = \int_{T^*\S[2]} a \, \mu_{q} \ ,
    \]
    as we wanted to show.

    \textbf{B. Case $\sigma \in (0, 1)$.} Let $\Upsilon \in \N$ large; we will take $\Upsilon \to \infty$ at the end. Thanks to Theorem \ref{Thm:quasimodes-Ghgamma}, there exist $C_{\sigma} > 0$ \eqref{Eq:Csigma-definition}, $C > 0$, and $N \in \N$ such that for all $n \geq N$,
    \[
    \norm*{ g_{h_n}^{\gamma} - \frac{1}{C_{\sigma}} \sum_{\abs{k - \sigma} \leq \Upsilon} \frac{1}{k - \sigma} y_{2(\ell_n + k)}^{\gamma} }[L^2(\S[2])] \leq C \Big[\Upsilon h_n + \abs{\sigma_n - \sigma} + \Upsilon^{-\frac{1}{2}} \Big] \ .
    \]
    Let us fix $q \in \S[2]$ such that $d(q, \gamma) = \frac{\pi}{2}$, thus \eqref{Eq:y2ellgamma=(-1)elll_z2ellq} implies that for all $n \geq N$,
    \begin{equation} \label{Eq:aprox-ghgamma-finlincombzellq}
        \norm*{ g_{h_n}^{\gamma} - (-1)^{\ell_n} \frac{1}{C_{\sigma}} \sum_{\abs{k - \sigma} \leq \Upsilon} \frac{ (-1)^{k} }{k - \sigma} z_{2(\ell_n + k)}^{q} }[L^2(\S[2])] \leq C \Big[\Upsilon h_n + \abs{\sigma_n - \sigma} + \Upsilon^{-\frac{1}{2}} \Big] \ .
    \end{equation}
    For $r = d(x, q)$, let
    \begin{equation} \label{Eq:def-kappaq-varsigmaq}
        \kappa^{q}(x, \xi) \coloneqq \cos r \ , \qquad \varsigma^{q}(x, \xi) \coloneqq \xi (\sin r \cdot \partial_{r}) \ , \qquad (x, \xi) \in T^*\S[2] \ ,
    \end{equation}
    and define the smooth symbol on $T^*\S[2]$,
    \begin{equation} \label{Eq:def-GammasigmaUpsilon}
        \Gamma^{\sigma, \Upsilon} \coloneqq \frac{1}{C_{\sigma}} \bigg[ \sum_{k = -\Upsilon + 1}^{0} \frac{(-1)^{k}}{k - \sigma} (\kappa^{q} - i \varsigma^{q})^{-(2k - 1)} + \sum_{k = 1}^{\Upsilon} \frac{(-1)^{k}}{ k - \sigma } (\kappa^{q} + i \varsigma^{q})^{2k - 1} \bigg] \ .
    \end{equation}
    Theorem 5.1 from \cite{Verdasco2026spheres} for $x_n = X \in \ell_2(\Z)$, 
    \[
    X = (X_{j})_{j \in \Z} \coloneqq \begin{dcases}
        \frac{1}{C_{\sigma}} \frac{ (-1)^k }{k - \sigma} & \quad j = 2k \\
        0 & \quad j = 2k + 1 
    \end{dcases} \ , \qquad  S = S(\Upsilon) \coloneqq [-2\Upsilon + 1, 2 \Upsilon - 1] \cap \N \ ,
    \]
    and $\rho_n \coloneqq 2\ell_n + 1$ implies that there exist $C(\Upsilon) > 0$ and $N = N(\Upsilon) \in \N$ such that for all $n \geq N$
    \begin{equation} \label{Eq:finlincombzellq-approx-PsDO(symbol)zelln}
        \norm*{ \frac{1}{C_{\sigma}} \sum_{\abs{k - \sigma} \leq \Upsilon} \frac{ (-1)^{k} }{k - \sigma} z_{2(\ell_n + k)}^{q} - \Op[h_n]{\Gamma^{\sigma, \Upsilon} } z_{2\ell_n + 1}^{q} }[L^2(\S[2])] \leq C \Upsilon^{-\frac{1}{2}} + C(\Upsilon) h_n \ .
    \end{equation}
    Combining \eqref{Eq:aprox-ghgamma-finlincombzellq} and \eqref{Eq:finlincombzellq-approx-PsDO(symbol)zelln} we obtain that for all $n \geq N$,
    \begin{equation} \label{Eq:ghgamma-approx-PsDO(symbol)zelln}
        \norm*{ g_{h_n}^{\gamma} - (-1)^{\ell_n} \Op[h_n]{\Gamma^{\sigma, \Upsilon} } z_{2\ell_n + 1}^{q} }[L^2(\S[2])] \leq C \Big[ C(\Upsilon) h_n + \abs{\sigma_n - \sigma} + \Upsilon^{-\frac{1}{2}} \Big] \ .
    \end{equation}
    Consequently, for every $\Upsilon \in \N$ and every $n \geq N(\Upsilon)$
    \begin{multline*} 
        \abs*{ \ip*{ g_{h_n}^{\gamma} }{\Op[h_n]{a} g_{h_n}^{\gamma} }[L^2(\S[2])] - \ip*{ \Op[h_n]{\Gamma^{\sigma, \Upsilon}} z_{2 \ell_n + 1}^{q} }{\Op[h_n]{a } \Op[h_n]{\Gamma^{\sigma, \Upsilon}} z_{2\ell_n + 1}^{q} }[L^2(\S[2])] } \\
        \leq C \Big[ C(\Upsilon) h_n + \abs{\sigma_n - \sigma} + \Upsilon^{-\frac{1}{2}} \Big] \ .
    \end{multline*}
    Using symbolic calculus and taking $n \to \infty$ thanks to Theorem 3.1 from \cite{Verdasco2026spheres}, we find that for every $\Upsilon \in \N$,
    \begin{equation} \label{Eq:approx-preDCT}
        \abs*{ \int_{T^*\S[2]} a \, \mu - \int_{T^*\S[2]} a \cdot \abs*{\Gamma^{\sigma, \Upsilon} }^2 \mu_{q} } \leq C \Upsilon^{-\frac{1}{2}} \ .
    \end{equation}
    We would like to take $\Upsilon \to \infty$ in this last identity, but for that we first need to study the weak-$\star$ limits of sequence of measures $(\abs*{\Gamma^{\sigma, \Upsilon} }^2 \mu_{q})_{\Upsilon \in \N}$.

    For every $a \in \CinfK(T^*\S[2])$ we have
    \[
    \int_{T^*\S[2]} a \cdot \abs*{\Gamma^{\sigma, \Upsilon} }^2 \mu_{q} = \int_{S_{q}^*\S[2]} \int_{0}^{2\pi} a(\phi_t(q, \xi)) \abs*{\Gamma^{\sigma, \Upsilon} (\phi_t(q, \xi)) }^2 \frac{\D{t}}{2\pi} \frac{\D{\xi}}{ \vol(S_{q}^{*}\S[2]) } \ ,
    \]
    where $\phi_t$ denote the geodesic flow at time $t \in \R$. From the definition \eqref{Eq:def-kappaq-varsigmaq} we see that
    \begin{equation} \label{Eq:kappaq-varsigmaq-flowoutq}
        \kappa^{q}(\phi_t(q, \xi)) = \cos t \ , \qquad \varsigma^{q}(\phi_{t}(q, \xi)) = \sin t \qquad \forall \, \xi \in S_{q}^*\S[2] \ , \quad t \in \R \ ,
    \end{equation}
    therefore,
    \[
    \Gamma^{\sigma, \Upsilon}(\phi_t(q, \xi)) = \frac{e^{-it}}{C_{\sigma}} \sum_{k = -\Upsilon + 1}^{\Upsilon} \frac{(-1)^k}{ k - \sigma } e^{i 2kt} \ .
    \]

    \begin{Lemma} \label{Lemma:zetasigma2-Fourierexp}
        For $\sigma \in (0,1)$, let $\zeta^{\sigma, 1} \in L^2(\R / (2\pi \Z))$ be defined by
        \[
        \zeta^{\sigma, 1} (t) \coloneqq \frac{1}{C_{\sigma}} \frac{2\pi i}{1 - e^{i 2 \pi \sigma} } e^{i \sigma t} \ , \qquad t \in [0, 2\pi) \ ,
        \]
        which is discontinuous at $t = 0 \in \R / (2\pi \Z)$. Then the function $\zeta^{\sigma, 2} \in L^2(\R / (2\pi \Z))$ given by
        \begin{equation} \label{Eq:def-zetasigma2}
            \zeta^{\sigma, 2} (t) \coloneqq \begin{dcases}
                \zeta^{\sigma, 1}(2t + \pi) & \qquad t \in [0, \tfrac{\pi}{2}) \\
                \zeta^{\sigma, 1}(2t - \pi) & \qquad t \in [\tfrac{\pi}{2}, \tfrac{3\pi}{2})\\
                \zeta^{\sigma, 1}(2t - 3\pi) & \qquad t \in [\tfrac{3\pi}{2}, 2\pi)
            \end{dcases}
        \end{equation}
        has Fourier series
        \[
        \zeta^{\sigma, 2}(t) = \frac{1}{C_{\sigma}} \sum_{k \in \Z} \frac{(-1)^{k}}{k - \sigma } e^{i 2k t} \ .
        \]
    \end{Lemma}
    \begin{proof}[Proof of Lemma \texorpdfstring{\ref{Lemma:zetasigma2-Fourierexp}}{zetasigma2}]
        One first proves that $\zeta^{\sigma, 1}$ has Fourier series
        \[
        \zeta^{\sigma, 1} (t) = \frac{1}{C_{\sigma}} \sum_{k \in \Z} \frac{1}{k - \sigma} e^{i k t} \ .
        \]
        One obtains as a consequence that
        \begin{equation} \label{Eq:computation-Csigma}
            C_{\sigma} = \frac{2\pi}{\abs{1 - e^{i 2\pi \sigma}}} \ .
        \end{equation}
        Now, the Fourier series of $\zeta^{\sigma, 2}$ can be computed from the Fourier series of $\zeta^{\sigma, 1}$ because $\zeta^{\sigma, 2} = D_{2} S_{-\pi} \zeta^{\sigma, 1}$, where $D_{2}$ and $S_{-\pi}$ are the unitary maps on $L^2(\R /(2\pi \Z))$
        \[
        [D_{2} f] (t) \coloneqq f(2t) \ , \qquad [S_{-\pi} f](t) \coloneqq f(t + \pi) \ . \qedhere
        \]
    \end{proof}

    From Lemma \ref{Lemma:zetasigma2-Fourierexp} we infer that $\Gamma^{\sigma, \Upsilon}(\phi_t(q, \xi))$ is the partial Fourier series of $\zeta^{\sigma, 2}$ \eqref{Eq:def-zetasigma2}. Thanks to Carleson's theorem we get that
    \[
    \Gamma^{\sigma, \Upsilon}(\phi_t(q, \xi)) \xrightarrow[\Upsilon \to \infty]{} e^{-it} \zeta^{\sigma, 2}(t) \quad \text{a.e. $t \in \R / (2\pi \Z)$,}
    \]
    therefore, due to identity \eqref{Eq:computation-Csigma},
    \[
    \abs*{ \Gamma^{\sigma, \Upsilon}(\phi_t(q, \xi)) }^2 \xrightarrow[\Upsilon \to \infty]{} \abs*{ \zeta^{\sigma, 2}(t) }^2 = 1 \quad \text{a.e. $t \in \R / (2\pi \Z)$.}
    \]
    Now we take $\Upsilon \to \infty$ in \eqref{Eq:approx-preDCT} and apply the dominated convergence theorem to get
    \[
    \int_{T^*\S[2]} a \, \mu = \int_{T^*\S[2]} a \, \mu_{q} \ . \qedhere 
    \]
\end{proof}

Let $\gamma$ be a closed geodesic on $\S[2]$, and for a given $\theta \in \Theta = \R / (\pi \Z)$, let $\Lap_{\gamma, \theta}$ be the singular perturbation of $\Lap$ by $\delta_{\gamma}$ as defined in \eqref{Eq:dom(Lapgammatheta)}. Consider the equation for $\eta \in \R \setminus \Speceven$,
\begin{equation} \label{Eq:equation_eta-newEV-rep}
    \delta_{\gamma} \Big( G_{\eta}^{\gamma} - \frac{1}{2\pi} u_{\gamma 1} \Big) = \frac{1}{2\pi} \cotan \theta \ .
\end{equation}
We define the set of solutions $\eta$ to \eqref{Eq:equation_eta-newEV-rep}
\begin{equation} \label{Eq:set-newEV}
    \Lambda_{\theta} \coloneqq \big\{ \eta \in \R \setminus \Speceven(\Lap) \colon \text{$\eta$ satisfies \eqref{Eq:equation_eta-newEV-rep}} \, \big\} \ .
\end{equation}
The set $\Lambda_{\theta}$ is countable as a consequence of Lemma \ref{Lemma:EV-equation}.

Given a sequence of singular perturbations of $\Lap$ by $\delta_{\gamma}$, $(\Lap_{\gamma, \theta_{n}})_{n \in \N}$, we denote
\begin{equation*}
    \mathcal{M}_{\mathrm{sc}} \big( ( \Lap_{\gamma, \theta_{n}} )_{n \in \N} \big)
\end{equation*}
as the set of all possible weak-$\star$ accumulation points of sequences of Wigner distributions $(W_{h_n}[u_n])_{n \in \N}$ where $u_n \in L^2(\S[2])$ and $h_n > 0$ satisfy
\begin{equation} \label{Eq:semiclassical-un_hn-conditions}
    (h_n)^2\Lap_{\gamma, \theta_n} u_n = u_n \ , \qquad \norm{u_n}[L^2(\S[2])] = 1 \ , \qquad \lim_{n \to \infty} h_n = 0 \ .
\end{equation}

\begin{Theorem} \label{Thm:equal-set_SDM}
    Let $\gamma$ be a closed geodesic on $\S[2]$, and let $(\Lap_{\gamma, \theta_n})_{n \in \N}$ be a sequence of singular perturbations of $\Lap$ by $\delta_{\gamma}$. Then
    \[
    \mathcal{P}_{\mathrm{inv}} (S^*\S[2]) \subseteq \mathcal{M}_{\mathrm{sc}} \big( ( \Lap_{\gamma, \theta_{n}} )_{n \in \N} \big) \ .
    \]
    If one further assumes that for every $n \in \N$ (see \eqref{Eq:set-newEV}),
    \begin{equation} \label{Eq:condition-newEV_cap_oldEV=empty}
        \Lambda_{\theta_n} \cap \Specodd(\Lap) = \varnothing \ ,
    \end{equation}
    then
    \[
    \mathcal{P}_{\mathrm{inv}} (S^*\S[2]) = \mathcal{M}_{\mathrm{sc}} \big( ( \Lap_{\gamma, \theta_{n}} )_{n \in \N} \big) \ .
    \]
\end{Theorem}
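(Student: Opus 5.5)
The first inclusion comes from old eigenfunctions alone; the equality comes from showing that, under \eqref{Condition-newEV_cap_oldEV=empty}, every eigenfunction is either old or a pure new eigenfunction, whose measure is already known.

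\textbf{Inclusion.} I use only old eigenfunctions. By Theorem \ref{Thm:Spectrum-Lapgammatheta}, for every $\theta$ the spaces
\[
\ker(\Lap - \lambda_{2\ell+1}^2) \quad\text{and}\quad \{u \in \ker(\Lap-\lambda_{2\ell}^2) : \delta_\gamma u = 0\}
\]
lie inside $\ker(\Lap_{\gamma,\theta} - \lambda^2)$. Indeed, odd-degree harmonics automatically satisfy $\delta_\gamma u=0$ because $Y^\gamma_{2\ell+1}=0$. Hence any sequence $u_n$ of such functions, with $h_n=\lambda_{\ell_n}^{-1}$, satisfies \eqref{Eq:semiclassical-un_hn-conditions} for every choice of $\theta_n$. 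Given $\mu\in\mathcal P_{\mathrm{inv}}(S^*\S[2])$, I invoke \cite[Theorem 1.3]{Verdasco2026spheres} (as announced in the outline). It produces a sequence of spherical harmonics $u_n\in\ker(\Lap-\lambda_{\ell_n}^2)$ that are orthogonal to a prescribed codimension-one subspace, here $\ker\delta_\gamma$ restricted to each eigenspace, and whose semiclassical measure is $\mu$. If that theorem is stated for point perturbations instead, I would adapt it. Removing the single direction $Y^\gamma_{\ell}$ from a space of dimension $2\ell+1$ does not affect the achievable limits: the Jakobson--Zelditch construction yields limits through sums of Gaussian beams, and projecting off one unit vector changes the $L^2$ norm by $|\langle y^\gamma_\ell, u\rangle|$. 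Alternatively, I can restrict to odd $\ell_n$, where no constraint is present at all, since the construction of any invariant measure works along any subsequence of degrees. That is the cleanest route, and it gives the inclusion.

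\textbf{Equality.} I must show that any semiclassical measure of a sequence satisfying \eqref{Eq:semiclassical-un_hn-conditions} is invariant; by Theorem \ref{Thm:Spectrum-Lapgammatheta} it is also supported in $S^*\S[2]$ and is a probability measure. With $\eta_n=h_n^{-2}$, Theorem \ref{Thm:Spectrum-Lapgammatheta} and Proposition \ref{Prop:structure-EF-T*} put each $u_n$ in one of three classes:
\begin{enumerate}
\item $\eta_n\in\Speceven(\Lap)$, so $u_n$ is an eigenfunction of $\Lap$;
\item $\eta_n\notin\Lambda_{\theta_n}\cup\Speceven(\Lap)$, so $u_n\in\ker(\Lap-\eta_n)$ is again an eigenfunction of $\Lap$;
\item $\eta_n\in\Lambda_{\theta_n}$, so $u_n\in\ker(\Lap-\eta_n)+\Span[\C]{G^\gamma_{\eta_n}}$.
\end{enumerate}
Condition \eqref{Eq:condition-newEV_cap_oldEV=empty} ensures that in the third class $\eta_n\notin\Spec(\Lap)$, so $\ker(\Lap-\eta_n)=0$ and $u_n$ is a unimodular multiple of $g^\gamma_{h_n}$.

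I then split the sequence into the subsequence of class-three indices and its complement. Along the complement, $u_n$ are genuine Laplace eigenfunctions with $h_n^{-2}=\lambda_{\ell}^2$, and the standard Egorov/commutator argument, $\langle u_n,[\Lap,\Op[h_n]{a}]u_n\rangle=0$, gives invariance. Along the class-three subsequence, Theorem \ref{Thm:SDM-newEF_is_muq} gives the limit $\mu_q$, which is invariant. If both subsequences are infinite they share the same accumulation points, so any limit point is invariant.

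The main obstacle I anticipate is the inclusion step. I need to verify that \cite[Theorem 1.3]{Verdasco2026spheres} applies to the constraint $\delta_\gamma u=0$, or else that restricting to odd degrees still realizes every invariant measure. Jakobson--Zelditch shows the latter for any infinite set of degrees, so I would cite that and check that their quasimode construction can be taken along odd $\ell$.
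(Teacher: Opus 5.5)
Your treatment of the equality matches the paper's. Under \eqref{Eq:condition-newEV_cap_oldEV=empty}, Theorem~\ref{Thm:Spectrum-Lapgammatheta} leaves only Laplace eigenfunctions with $\delta_\gamma u_n = 0$, or unimodular multiples of $g^{\gamma}_{h_n}$, and Theorem~\ref{Thm:SDM-newEF_is_muq} handles the latter. One slip: the two subsequences need not share accumulation points, since one yields $\mu_q$ and the other can yield any invariant measure. What you actually need, and what the paper uses, is that a limit point of the whole sequence is also a limit along a further subsequence lying entirely in one class.

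For the inclusion you take a genuinely different route.

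\textbf{The paper's route.} It uses \cite[Theorem 1.3]{Verdasco2026spheres}, which supplies Laplace eigenfunctions vanishing at a point $q$ with $d(q,\gamma)=\frac{\pi}{2}$. Lemma~\ref{Lemma:relation_Yellgamma-Zellq} gives $Y^{\gamma}_{\ell} = P_{\ell}(0)\, Z^{q}_{\ell}$, hence $\delta_\gamma u_n = P_{\ell_n}(0)\, u_n(q) = 0$. That proportionality is the link missing from your first attempt. You also misdescribe the cited theorem: it imposes vanishing at a point, not orthogonality to a codimension-one subspace.

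\textbf{Your odd-degree route.} It needs only $Y^{\gamma}_{2\ell+1}=0$, so every odd-degree harmonic lies in $\dom(T)$ and is an eigenfunction of every $\Lap_{\gamma,\theta}$. This avoids the vanishing-at-$q$ construction entirely. The price is an external fact: every $\mu \in \mathcal{P}_{\mathrm{inv}}(S^*\S[2])$ is realized by harmonics of odd degree. That fact is true, because the Jakobson--Zelditch construction works in every degree $N$:
\begin{itemize}
\item take normalized sums of highest-weight harmonics along finitely many oriented great circles;
\item the cross terms vanish, since distinct geodesic orbits are disjoint in $S^*\S[2]$;
\item finish with weak-$\star$ density and a diagonal extraction.
\end{itemize}
However, this has to be checked inside the construction; it cannot be read off from the bare statement $\mathcal{M}_{\mathrm{sc}}(\Lap) = \mathcal{P}_{\mathrm{inv}}(S^*\S[2])$. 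The paper's route spares you that check.

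\textbf{Drop the projection remark.} Removing the $y^{\gamma}_{\ell_n}$-component preserves the limit only if $\langle y^{\gamma}_{\ell_n}, u_n\rangle \to 0$. This fails, for example, for $u_n = z^{q}_{2\ell_n} = \pm y^{\gamma}_{2\ell_n}$, which the projection annihilates outright.
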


\begin{Remark} \label{Rmk:genericity_condition-newEV_cap_oldEV=empty}
    Condition
    \[
    \Lambda_{\theta} \cap \Specodd(\Lap) = \varnothing
    \]
    is satisfied for all but a countable collection of $\theta \in \Theta$. This is because $\Specodd(\Lap)$ is countable and if $\lambda_{2\ell + 1}^{2} \in \Lambda_{\theta}$, then $\lambda_{2\ell + 1}^{2} \notin \Lambda_{\theta'}$ for $\theta' \neq \theta$ (see Lemma \ref{Lemma:EV-equation}).
\end{Remark}

\begin{proof}
    \textbf{A.} We prove
    \[
    \mathcal{P}_{\mathrm{inv}}(S^*\S[2]) \subseteq \mathcal{M}_{\mathrm{sc}} \big( ( \Lap_{\gamma, \theta_{n}} )_{n \in \N} \big) \ .
    \]
    Let $\mu \in \mathcal{P}_{\mathrm{inv}}(S^*\S[2])$. Take $q \in \S[2]$ such that $d(q, \gamma) = \frac{\pi}{2}$, and let $(\ell_n)_{n \in \N} \subseteq \N$ and $(u_n)_{n \in \N} \subseteq L^2(\S[2])$ be given by Theorem 1.3 of \cite{Verdasco2026spheres}, which enjoy the following properties
    \[
    \Lap u_n = \lambda_{\ell_{n}}^2 u_n \ , \qquad \norm{u_n}[L^2(\S[2])] = 1 \ , \qquad u_n(q) = 0 \ ,
    \]
    and additionally, if $h_n \coloneqq \lambda_{\ell_{n}}^{-1}$, then $h_n \to 0^+$ and for all $a \in \CinfK(T^*\S[2])$,
    \[
    \lim_{n \to \infty} \ip{u_n}{ \Op[h_n]{a} u_n}[L^2(\S[2])] = \int_{T^*\S[2]} a \, \mu \ .
    \]
    If we prove $\delta_{\gamma} u_n = 0$ for all $n \in \N$, then $(h_n)_{n \in \N}$ and $(u_n)_{n \in \N}$ would satisfy \eqref{Eq:semiclassical-un_hn-conditions}, thus proving that $\mu \in \mathcal{M}_{\mathrm{sc}}((\Lap_{\gamma, \theta_n})_{n \in \N})$. Fix $n \in \N$. Since $u_n \in \ker(\Lap - \lambda_{\ell_n}^2)$, \eqref{Eq:def-Yellgamma} gives that
    \[
    \delta_{\gamma} u_n = \ip{Y_{\ell_n}^{\gamma}}{u_n} \ .
    \]
    Next, Lemma \ref{Lemma:relation_Yellgamma-Zellq} implies that $Y_{\ell_n}^{\gamma} = \alpha_{n} Z_{\ell_n}^{q}$ for some $\alpha_{n} \in \R$, therefore
    \[
    \delta_{\gamma} u_n = \alpha_{n} \ip{Z_{\ell_n}^{q}}{u_n}[L^2(\S[2])] \ .
    \]
    Lastly, since $u_n \in \ker(\Lap - \lambda_{\ell_n}^2)$, \eqref{Eq:def-Zellq} gives
    \[
    \delta_{\gamma} u_n = \alpha_n u_n(q) = 0 \ . \qedhere
    \]

    \textbf{B.} We prove
    \[
    \mathcal{M}_{\mathrm{sc}} \big( ( \Lap_{\gamma, \theta_{n}} )_{n \in \N} \big) \subseteq \mathcal{P}_{\mathrm{inv}} (S^*\S[2])
    \]
    under the additional hypothesis \eqref{Eq:condition-newEV_cap_oldEV=empty}. Fix $\mu \in \mathcal{M}_{\mathrm{sc}}((\Lap_{\gamma, \theta_n})_{n \in \N})$. Let $(u_n)_{n \in \N} \subseteq L^2(\S[2])$ and $(h_n)_{n \in \N} \subseteq (0,1)$, be sequences satisfying \eqref{Eq:semiclassical-un_hn-conditions} and additionally, for all $a \in \CinfK(T^*\S[2])$,
    \[
    \lim_{n \to \infty} \ip{u_n}{\Op[h_n]{a} u_n}[L^2(\S[2])] = \int_{T^*\S[2]} a \, \mu \ .
    \]
    Since \eqref{Eq:condition-newEV_cap_oldEV=empty} holds for all $n \in \N$, Theorem \ref{Thm:Spectrum-Lapgammatheta} implies that for every $n \in \N$, one of the following two mutually exclusive events occurs:
    \begin{enumerate}
        \item $h_n^{-2} \in \Spec(\Lap)$ and $u_n \in \ker(\Lap - h_n^{-2})$, $\delta_{\gamma} u_n = 0$.
        \item $h_n^{-2} \in \R \setminus \Spec(\Lap)$ and $u_n = c g_{h_n}^{\gamma}$, $c \in \C$, $\abs{c} = 1$.
    \end{enumerate}
    Up to extraction of a subsequence, we may assume that (1) holds for all $n \in \N$, or that (2) holds for all $n \in \N$. If (1) holds for all $n \in \N$, then $\mu \in \mathcal{P}_{\mathrm{inv}}(S^*\S[2])$. If (2) holds for all $n \in \N$, Theorem \ref{Thm:SDM-newEF_is_muq} implies that $\mu = \mu_q \in \mathcal{P}_{\mathrm{inv}}(S^*\S[2])$.
\end{proof}

For $q \in \S[2]$ and a Lebesgue-measurable set $J \subseteq \R / (2 \pi \Z)$, let $\mu_{q, J}$ be the positive Radon measure on $T^*\S[2]$,
\begin{equation} \label{Eq:def-muqJ}
    \int_{T^*\S[2]} a \, \mu_{q, J} \coloneqq \int_{S_{q}^*\S[2]} \int_{J} a(\phi_t(q, \xi)) \frac{\D{t}}{2\pi} \frac{\D{\xi}}{2\pi} \ , \qquad a \in \Cont(T^*\S[2]) \ .
\end{equation}
Note that $\mu_{q, J}$ is a probability measure if and only if $J$ is of full measure in $\R / (2 \pi \Z)$, and if $J$ and $K$ are two disjoint Lebesgue measurable subsets of $\R / (2 \pi \Z)$, then $\mu_{q, J} + \mu_{q, K} = \mu_{q, J \cup K}$.

\begin{Theorem} \label{Thm:end-theorem-non-invariance}
    Let $\gamma$ be a closed geodesic on $\S[2]$, and let $(\Lap_{\gamma, \theta_n})_{n \in \N}$ be a sequence of singular perturbations of $\Lap$ by $\delta_{\gamma}$. Assume that there exists $(\ell_n)_{n \in \N} \subseteq \N$ such that (see \eqref{Eq:set-newEV})
    \begin{equation} \label{Eq:bad-sequence}
        \lambda_{2\ell_n + 1}^{2} \in \Lambda_{\theta_n} \qquad \text{and} \qquad \lim_{n \to \infty} \ell_n = \infty \ .
    \end{equation}
    Set $q \in \S[2]$ such that $d(q, \gamma) = \frac{\pi}{2}$, and intervals $J \coloneqq [\frac{\pi}{2}, \frac{3\pi}{2})$ and $K \coloneqq \R / (2\pi \Z) \setminus J$. Take $m_{J}, m_{K} \in [0,2]$ such that $m_{J} + m_{K} = 2$. Moreover, set $h_n \coloneqq \lambda_{2 \ell_n + 1}^{-1}$. There exist $(u_n)_{n \in \N} \subseteq L^2(\S[2])$ satisfying \eqref{Eq:semiclassical-un_hn-conditions} such that for all $a \in \CinfK(T^*\S[2])$,
    \[
    \lim_{n \to \infty} \ip{ u_n }{ \Op[h_n]{a} u_n }[L^2(\S[2])] = \int_{T^*\S[2]} a \ \Big[ m_{J} \mu_{q, J} + m_{K} \mu_{q, K} \Big] \ .
    \]
\end{Theorem}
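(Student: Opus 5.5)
Since $\lambda_{2\ell_n+1}^2 = h_n^{-2} \in \Lambda_{\theta_n}$, Theorem \ref{Thm:Spectrum-Lapgammatheta} gives $\ker(\Lap_{\gamma,\theta_n} - h_n^{-2}) = \ker(\Lap - \lambda_{2\ell_n+1}^2) + \Span[\C]{G_{h_n}^{\gamma}}$. This holds because $\lambda_{2\ell_n+1}^2 \notin \Speceven(\Lap)$, and odd-degree eigenfunctions automatically satisfy $\delta_{\gamma} u = 0$, since $Y^{\gamma}_{2\ell+1}=0$ by Lemma \ref{Lemma:relation_Yellgamma-Zellq}. We therefore take
\[
u_n \coloneqq \varphi\, z_{2\ell_n+1}^{q} + \psi\, (-1)^{\ell_n} g_{h_n}^{\gamma}, \qquad \varphi,\psi\in\C \text{ fixed.}
\]
Such $u_n$ satisfies $(h_n)^2\Lap_{\gamma,\theta_n}u_n = u_n$. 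It remains to normalize it and to choose $\varphi,\psi$ so as to realize $m_J,m_K$.

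Here $h_n^{-2} = (2\ell_n+1)(2\ell_n+2)$, so in \eqref{Eq:def-elln+sigman-gamma} the parameter $\sigma_n$ is determined by $2(\ell_n+\sigma_n) = 2\ell_n+1$. Hence $\sigma_n \equiv \sigma = \tfrac12$ exactly, and the term $\abs{\sigma_n-\sigma}$ disappears. Then \eqref{Eq:ghgamma-approx-PsDO(symbol)zelln} gives
\[
\norm{ (-1)^{\ell_n} g_{h_n}^{\gamma} - \Op[h_n]{\Gamma^{1/2,\Upsilon}} z^q_{2\ell_n+1} } \le C[C(\Upsilon)h_n + \Upsilon^{-1/2}].
\]
Consequently
\[
\norm{u_n - \Op[h_n]{\varphi + \psi\Gamma^{1/2,\Upsilon}} z^q_{2\ell_n+1}} \le C\abs{\psi}[C(\Upsilon)h_n+\Upsilon^{-1/2}].
\]
By symbolic calculus, and by Theorem 3.1 of \cite{Verdasco2026spheres}, which says that $\mu_q$ is the semiclassical measure of $z^q_{2\ell_n+1}$ (with $h_n(2\ell_n+1)\to 1$ up to harmless rescaling), we obtain, exactly as in \eqref{Eq:approx-preDCT},
\[
\limsup_n \Big| \ip{u_n}{\Op[h_n]{a}u_n} - \int a\,\abs{\varphi+\psi\Gamma^{1/2,\Upsilon}}^2\mu_q\Big| \le C\Upsilon^{-1/2}.
\]
The same estimate with $a$ replaced by a cutoff equal to $1$ near $S^*\S[2]$ controls $\norm{u_n}^2$.

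Next we pass to the limit $\Upsilon\to\infty$. Along $\phi_t(q,\xi)$ we have $\Gamma^{1/2,\Upsilon} \to e^{-it}\zeta^{1/2,2}(t)$ a.e. by Lemma \ref{Lemma:zetasigma2-Fourierexp} and Carleson's theorem. We also need domination, for instance from uniform boundedness of partial sums of this Fourier series with $1/k$-type coefficients, or else we pass to the limit in $L^2$ along a subsequence. For $\sigma=\tfrac12$ we get $\zeta^{1/2,1}(t) = \frac{1}{C_{1/2}}\frac{2\pi i}{2} e^{it/2}$ and $C_{1/2}=\pi$, so $\zeta^{1/2,1}(t) = i e^{it/2}$ on $[0,2\pi)$. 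Composing with the reparametrization \eqref{Eq:def-zetasigma2}, we find:
\begin{itemize}
\item on $t\in[\tfrac\pi2,\tfrac{3\pi}2)=J$, $e^{-it}\zeta^{1/2,2}(t) = i e^{-it}e^{i(2t-\pi)/2} = 1$;
\item on $t\in[0,\tfrac\pi2)$, $e^{-it}\zeta^{1/2,2}(t) = i e^{-it}e^{i(2t+\pi)/2} = -1$;
\item on $t\in[\tfrac{3\pi}{2},2\pi)$, $e^{-it}\zeta^{1/2,2}(t) = i e^{-it}e^{i(2t-3\pi)/2} = -1$.
\end{itemize}
So the limit symbol equals $\charf_J - \charf_K$ in $t$. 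This sign bookkeeping is the step I expect to need most care, since the overall sign $(-1)^{\ell_n}$ and the interval conventions must match. Whatever the sign, the limit is $\pm(\charf_J-\charf_K)$, and we can absorb it into $\psi$.

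The limiting measure is therefore $\abs{\varphi+\psi}^2\mu_{q,J} + \abs{\varphi-\psi}^2\mu_{q,K}$, and $\norm{u_n}^2 \to \tfrac12(\abs{\varphi+\psi}^2+\abs{\varphi-\psi}^2) = \abs\varphi^2+\abs\psi^2$, because $J$ and $K$ each have $t$-measure $\pi$. Take real $\varphi = \tfrac12(\sqrt{m_J}+\sqrt{m_K})$ and $\psi = \tfrac12(\sqrt{m_J}-\sqrt{m_K})$. Then $\abs{\varphi+\psi}^2=m_J$, $\abs{\varphi-\psi}^2=m_K$, and $\abs\varphi^2+\abs\psi^2 = \tfrac12(m_J+m_K) = 1$. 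Replacing $u_n$ by $u_n/\norm{u_n}$ changes nothing in the limit, and the resulting sequence is the one required.

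The main obstacles are justifying the passage $\Upsilon\to\infty$, and checking that the cross terms $\varphi\bar\psi\,\Gamma^{1/2,\Upsilon}$ pass to the limit as well as $\abs{\Gamma^{1/2,\Upsilon}}^2$ does. Here I would use $L^2(\D t)$ convergence of the Fourier partial sums together with boundedness of $a$, which avoids needing pointwise domination.
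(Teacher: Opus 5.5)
Your proposal is correct and follows the paper's proof. It uses the same $u_n=\varphi\, z^{q}_{2\ell_n+1}+\psi\, g^{\gamma}_{h_n}$, the observation $\sigma_n\equiv\tfrac12$, the approximation of $u_n$ by $\Op[h_n]{\varphi+\psi\,\Gamma^{1/2,\Upsilon}} z^{q}_{2\ell_n+1}$, and the identification of the limit symbol along the flow-out of $S^*_q\S[2]$ as $\varphi+\psi(\charf_J-\charf_K)$.

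Your factor $(-1)^{\ell_n}$ in front of $g^{\gamma}_{h_n}$ repairs a sign that the paper drops when it invokes \eqref{Eq:ghgamma-approx-PsDO(symbol)zelln}; without it, $m_J$ and $m_K$ swap along odd $\ell_n$. Your $L^2(\D{t})$ convergence of the Fourier partial sums is a cleaner replacement for the paper's Carleson-plus-dominated-convergence step. Finally, no renormalization is needed: $g^{\gamma}_{h_n}$ has only even-degree components, so $z^{q}_{2\ell_n+1}\perp g^{\gamma}_{h_n}$ and $\norm{u_n}[L^2(\S[2])]^2=\abs{\varphi}^2+\abs{\psi}^2=1$ exactly.
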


\begin{proof}
    Set $\varphi, \psi \in \C$ such that $m_{J} = \abs{\varphi + \psi}^2$ and $m_{K} = \abs{\varphi - \psi}^2$. Due to hypothesis \eqref{Eq:bad-sequence}, Theorem \ref{Thm:Spectrum-Lapgammatheta} implies that
    \[
    u_n \coloneqq \varphi \, z_{2\ell_n + 1}^{q} + \psi \, g_{h_n}^{\gamma} \in \ker(\Lap_{\gamma, \theta_n} - \lambda_{2\ell_n + 1}^{2}) \qquad \forall \, n \in \N \ .
    \]
    Moreover, since $z_{2\ell_n + 1}^{q}$ and $g_{h_n}^{\gamma}$ are mutually orthogonal ($g_{h}^{\gamma} \in \ker(\Lap - \lambda_{2j + 1}^2)^{\perp}$ for all $h > 0$, $j \in \N$), one has
    \[
    \norm{u_n}[L^2(\S[2])]^2 = \abs{\varphi}^2 + \abs{\psi}^2 = \frac{1}{2} \big( \abs{\varphi + \psi}^2 + \abs{\varphi - \psi}^2 \big) = \frac{1}{2} \big( m_{J} + m_{K} \big) = 1 \ ,
    \]
    therefore $(u_n)_{n \in \N} \subseteq L^2(\S[2])$ satisfies \eqref{Eq:semiclassical-un_hn-conditions}. We claim that $(u_n)_{n \in \N}$ is the sequence we were looking for.

    Up to extraction of a subsequence, we may assume there exists a unique measure $\mu$ on $T^*\S[2]$ such that
    \[
    \lim_{n \to \infty} \ip{ u_n }{\Op[h_n]{a} u_n }[L^2(\S[2])] = \int_{T^*\S[2]} a \, \mu \qquad \forall \, a \in \CinfK(T^*\S[2]) \ .
    \]
    Fix $a \in \CinfK(T^*\S[2])$. Note that if for $n \in \N$ we set $\sigma_n \in (0,1)$ such that
    \[
    \lambda_{2\ell_n + 1}^{2} = (h_n)^{-2} = \big( 2(\ell_n + \sigma_n) \big) \big( 2(\ell_n + \sigma_n) + 1 \big) \ ,
    \]
    then $\sigma_n = \frac{1}{2}$ for all $n \in \N$, thus certainly $\sigma_n \to \sigma = \frac{1}{2}$. Let $\Upsilon \in \N$ be a large parameter. Using the same reasoning as in the proof of Theorem \ref{Thm:SDM-newEF_is_muq}, part B, we infer that there exist a smooth symbol $\Gamma^{\frac{1}{2}, \Upsilon}$ \eqref{Eq:def-GammasigmaUpsilon}, constants $C, C(\Upsilon) > 0$, and $N = N(\Upsilon) \in \N$ such that for all $n \geq N$ (see \eqref{Eq:ghgamma-approx-PsDO(symbol)zelln}),
    \[
    \norm*{ g_{h_n}^{\gamma} - \Op[h_n]{\Gamma^{\frac{1}{2}, \Upsilon} } z_{2\ell_n + 1}^{q} }[L^2(\S[2])] \leq C \Big[ C(\Upsilon) h_n + \Upsilon^{-\frac{1}{2}} \Big] \ .
    \]
    Consequently, we find that for all $\Upsilon \in \N$ and all $n \geq N(\Upsilon)$,
    \[
    \norm*{ u_n - \Op[h_n]{\varphi + \psi \, \Gamma^{\frac{1}{2}, \Upsilon} } z_{2\ell_n + 1}^{q} }[L^2(\S[2])] \leq C \Big[ C(\Upsilon) h_n + \Upsilon^{-\frac{1}{2}} \Big] \ .
    \]
    Using symbolic calculus, we get that for all $\Upsilon \in \N$ and all $n \geq N(\Upsilon)$,
    \begin{equation*} 
        \abs*{ \ip{ u_n }{\Op[h_n]{a} u_n }[L^2(\S[2])] - \ip{ z_{2 \ell_n + 1}^{q} }{\Op[h_n]{a \abs*{\varphi + \psi \, \Gamma^{\frac{1}{2}, \Upsilon}}^2 } z_{2\ell_n + 1}^{q} }[L^2(\S[2])] } \leq C \Big[ C(\Upsilon) h_n + \Upsilon^{-\frac{1}{2}} \Big] \ .
    \end{equation*}
    Thanks to Theorem 3.1 from \cite{Verdasco2026spheres}, we may take limit $n \to \infty$ and obtain for all $\Upsilon \in \N$,
    \begin{equation} \label{Eq:auxend1123}
        \abs*{ \int_{T^*\S[2]} a \, \mu - \int_{T^*\S[2]} a \abs*{\varphi + \psi \, \Gamma^{\frac{1}{2}, \Upsilon}}^2 \, \mu_q } \leq C \Upsilon^{-\frac{1}{2}} \ .
    \end{equation}
    We would like to take $\Upsilon \to \infty$ now, but first we need to compute the weak-$\star$ limit of the sequence of measures $( \abs*{\varphi + \psi \, \Gamma^{1/2, \Upsilon}}^2 \, \mu_q )_{\Upsilon \in \N}$. We see that
    \[
    \int_{T^*\S[2]} a \abs*{\varphi + \psi \Gamma^{\frac{1}{2}, \Upsilon}}^2 \, \mu_q = \int_{S_{q}^{*}\S[2]} \int_{0}^{2\pi} a(\phi_t(q, \xi)) \abs*{\varphi + \psi \, \Gamma^{\frac{1}{2}, \Upsilon} (\phi_t(q, \xi))}^2 \frac{\D{t}}{2\pi} \frac{\D{\xi}}{ \vol(S_{q}^{*}\S[2]) } \ .
    \]
    Due to the definition of $\Gamma^{\frac{1}{2}, \Upsilon}$ \eqref{Eq:def-GammasigmaUpsilon} and the properties \eqref{Eq:kappaq-varsigmaq-flowoutq},
    \begin{equation} \label{Eq:auxend34273}
        \varphi + \psi \, \Gamma^{\frac{1}{2}, \Upsilon} (\phi_t(q, \xi)) = \varphi + \psi \frac{e^{-it}}{C_{\frac{1}{2}}}\sum_{k = -\Upsilon + 1}^{\Upsilon} \frac{(-1)^k}{k - \frac{1}{2}} e^{i k t} \ .
    \end{equation}
    Therefore, Lemma \ref{Lemma:zetasigma2-Fourierexp} for $\sigma = \frac{1}{2}$ implies that \eqref{Eq:auxend34273} is the partial Fourier series of
    \[
    \Big[ \varphi + \psi \, e^{-it} \zeta^{\frac{1}{2}, 2} \Big] (t) = \varphi + \psi \big[ \charf_{J}(t) - \charf_{K}(t) \big] \ ,
    \]
    where $J \coloneqq [\frac{\pi}{2}, \frac{3\pi}{2})$ and $K \coloneqq \R / (2 \pi \Z) \setminus J$. Carleson's theorem gives that for all $\xi \in S_{q}^{*}\S[2]$
    \[
    \abs*{ \varphi + \psi \, \Gamma^{\frac{1}{2}, \Upsilon} (\phi_t(q, \xi)) }^2 \xrightarrow[\Upsilon \to \infty]{} \abs{\varphi + \psi}^2 \charf_{J}(t) + \abs{\varphi - \psi}^2 \charf_{K}(t) \qquad \text{a.e. $t \in \R / (2\pi \Z)$.}
    \]
    The dominated convergence theorem let us take $\Upsilon \to \infty$ in \eqref{Eq:auxend1123} and conclude
    \[
    \int_{T^*\S[2]} a \, \mu = \int_{T^*\S[2]} a \ \Big[ m_{J} \mu_{q, J} + m_{K} \mu_{q, K} \Big] \ . \qedhere
    \]
\end{proof}

\begin{Lemma} \label{Lemma:projection_into_QL}
    Let $\gamma$ be a closed geodesic in $\S[2]$ and let $q \in \S[2]$ be such that $d(q, \gamma) = \frac{\pi}{2}$. Consider the following smooth map
    \[
    f \colon \R / (2\pi \Z)  \times S_{q}^{*}\S[2] \to \S[2] \ , \qquad f(t, \xi) \coloneqq \exp_{q}(t\xi) \ ,
    \]
    and the measures $\mu$ on $\R / (2\pi \Z)  \times S_{q}^{*}\S[2]$ and $\nu_{\gamma}$ on $\S[2]$,
    \begin{align*}
        \mu(\D{t}, \D{\xi}) & \coloneqq \frac{\D{t}}{2 \pi} \frac{\D{\xi}}{\vol(S_{q}^{*}\S[2])} \\
        \nu_{\gamma}(\D{x}) & \coloneqq \frac{2}{\pi} \frac{1}{\cos(d(x, \gamma))} \frac{\D{x}}{ \vol(\S[2])} = \frac{2}{\pi} \frac{1}{\sin(d(x, q))} \frac{\D{x}}{ \vol(\S[2])}
    \end{align*}
    Assume that $U$ is an open connected subset of $\R / (2\pi \Z)  \times S_{q}^{*}\S[2]$ such that $U \subseteq \{ (t, \xi) \colon t \neq 0, \pi\}$. Then $f|_{U}$ is a diffeomorphism onto its image and the pushforward of $\mu$ by $f|_{U}$ is
    \[
    (f|_{U})_{*} \mu = \frac{1}{2} \nu_{\gamma} \ .
    \]
\end{Lemma}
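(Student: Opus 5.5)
The plan is to use geodesic polar coordinates centered at $q$ and compute the pushforward as a Jacobian change of variables.

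\textbf{Coordinates.} Identify $S_q^*\S[2]$ with $\R/(2\pi\Z)$ through an angle $\vartheta$, so that $\D{\xi} = \D{\vartheta}$ and $\vol(S_q^*\S[2]) = 2\pi$. The map $(r,\vartheta)\mapsto \exp_q(r\xi_\vartheta)$ on $(0,\pi)\times \R/(2\pi\Z)$ is the usual polar chart. It is a diffeomorphism onto $\S[2]\setminus\{q,-q\}$, and in it the Riemannian measure is $\D{x} = \sin r\,\D{r}\,\D{\vartheta}$.

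\textbf{Reduction to the polar chart.} For $t\in(\pi,2\pi)$ one has $\exp_q(t\xi_\vartheta) = \exp_q((2\pi - t)\xi_{\vartheta+\pi})$. Hence the map
\[
\iota(t,\vartheta) = (2\pi - t, \vartheta + \pi)
\]
carries $(\pi,2\pi)\times S_q^*$ diffeomorphically onto $(0,\pi)\times S_q^*$. It preserves $\D{t}\,\D{\vartheta}$ and satisfies $f = f\circ \iota$ there. Since $U$ is connected and avoids $t\in\{0,\pi\}$, it lies entirely in $\{0<t<\pi\}$ or entirely in $\{\pi<t<2\pi\}$. In the first case $f|_U$ is just the polar chart restricted to $U$, hence a diffeomorphism onto its open image. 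In the second case $f|_U = (\text{polar chart})\circ\iota|_U$, which is again a diffeomorphism.

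\textbf{Jacobian computation.} Work in the case $U\subseteq\{0<t<\pi\}$; the other case reduces to it via $\iota$, which preserves $\mu$. Take $b\in \Cont_c(f(U))$. Then
\[
\int b\, (f|_U)_*\mu = \int_U b(f(t,\vartheta))\frac{\D{t}\,\D{\vartheta}}{4\pi^2} = \int_{f(U)} b(x)\frac{1}{4\pi^2 \sin(d(x,q))}\D{x},
\]
using $\D{x} = \sin r\,\D{r}\,\D{\vartheta}$ with $r = d(x,q) = t$.

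It remains to compare the constants. We have
\[
\frac12\nu_\gamma = \frac{1}{\pi}\frac{1}{\sin d(x,q)}\frac{\D{x}}{4\pi} = \frac{\D{x}}{4\pi^2\sin d(x,q)},
\]
so the two measures agree. The identity $\cos d(x,\gamma) = \sin d(x,q)$ follows from $d(x,q)+d(x,\gamma) = \pi/2$ for $x$ in the hemisphere of $q$, and similarly on the other side with $-q$. This gives both expressions for $\nu_\gamma$ in the statement.

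The main obstacle is the injectivity and diffeomorphism claim on $U$, specifically the case where $t$ exceeds $\pi$. The connectedness argument together with the reflection $\iota$ handles it; everything else is a routine change of variables.
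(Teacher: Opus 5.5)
Your proof is correct. The paper states this lemma without proof, so there is nothing to compare against; your argument supplies the verification it leaves implicit. Connectedness of $U$ forces $U\subseteq\{0<t<\pi\}$ or $U\subseteq\{\pi<t<2\pi\}$. The reflection $\iota(t,\vartheta)=(2\pi-t,\vartheta+\pi)$ preserves $\mu$ and satisfies $f\circ\iota=f$, so it reduces the second case to the first. In the polar chart, $\D{x}=\sin r\,\D{r}\,\D{\vartheta}$ then gives $(f|_U)_*\mu=\frac{\D{x}}{4\pi^2\sin d(x,q)}=\frac12\nu_\gamma$.

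One point is worth making explicit. For a general $U$ the pushforward is carried by $f(U)$, so the identity can only hold as an equality of measures on $f(U)$. That is exactly what testing against $b\in\Cont_c(f(U))$ proves. It is also the form needed to project $m_J\mu_{q,J}+m_K\mu_{q,K}$ onto $\S[2]$ in Theorem~\ref{Thm:maintheorem-QuantumLimit}. There one takes for $U$ the four quarter-cylinders $(0,\frac{\pi}{2})\times S_q^*\S[2]$, $(\frac{\pi}{2},\pi)\times S_q^*\S[2]$, and so on, whose images are the two hemispheres up to null sets.
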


\appendix

\section{The von Neumann theory of symmetric extensions}
\label{App:vonNeumann_theory}

In this appendix, we collect and prove some abstract results on the von Neumann theory of symmetric extensions that we invoked in the article.

Let $\mathcal{H}$ be a complex Hilbert space with inner product $\ip{u}{v}$, $u, v \in \mathcal{H}$. We will assume that it is conjugate-linear in the first argument:
\[
\ip{z u + w}{v} = \conj{z} \ip{u}{v} + \ip{w}{v} \ .
\]
Let $T \colon \dom(T) \to \mathcal{H}$ be a closed symmetric operator, where $\dom(T)$ is a dense subspace of $\mathcal{H}$.  

\begin{Definition}
    The adjoint operator $T^* \colon \dom(T^*) \to \mathcal{H}$ is the operator given by
    \begin{equation*}
        \dom(T^*) \coloneqq \Big\{ u \in \mathcal{H} \colon \ \exists \, C_{u} > 0 \ \text{s.t.} \ \abs{\ip{u}{T w}} \leq C_{u} \norm{w} \quad \forall \, w \in \dom(T) \, \Big\} \ ,
    \end{equation*}
    and for $u \in \dom(T^*)$, $T^*u \in \mathcal{H}$ is defined via the Riesz representation theorem: $T^*u$ is the unique vector such that
    \[
    \ip{T^*u}{w} = \ip{u}{T w} \qquad \forall \, w \in \dom(T) \ .
    \]
\end{Definition}

Then, we consider two additional structures on $\dom(T^*)$. First, we define the graph--inner product on $\dom(T^*)$:
\[
\ip{u}{w}[T^*] \coloneqq \ip{u}{w} + \ip{T^* u}{T^* w} \ , \qquad u, w \in \dom(T^*) \ .
\]
Since $T^*$ is closed, $(\dom(T^*)$ is a Hilbert space with this inner product, and since $T$ is closed, $\dom(T)$ is a closed subspace of $\dom(T^*)$. Additionally, we also define a skew-Hermitian form on $\dom(T^*)$:
\begin{equation} \label{Eq:def-omega_form}
    \omega(u, v) \coloneqq \ip{u}{T^*v} - \ip{T^*u}{v} \ , \qquad u, v \in \dom(T^*) \ .
\end{equation}

\begin{Lemma} \label{Lemma:characterization_dom(T)-via_omega}
    Given a closed symmetric operator $T$ on $\mathcal{H}$ and the skew-Hermitian form $\omega$ on $\dom(T^*)$, one has
    \[
    u \in \dom(T) \quad \iff \quad \omega(u, w) = 0 \quad \forall \, w \in \dom(T^*) \ .
    \]
\end{Lemma}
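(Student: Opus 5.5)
The direction ($\Rightarrow$) is immediate from the definition of the adjoint. If $u\in\dom(T)$ and $w\in\dom(T^*)$, then $T\subset T^*$ gives $T^*u=Tu$, and the defining identity of $T^*w$ (tested against $u\in\dom(T)$) yields
\[
\omega(u,w)=\ip{u}{T^*w}-\ip{Tu}{w}=\conj{\ip{T^*w}{u}}-\ip{Tu}{w}=\conj{\ip{w}{Tu}}-\ip{Tu}{w}=0 .
\]
The only thing to watch is the conjugate-linear convention in the first slot.

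For ($\Leftarrow$) I would argue through graphs. Let $u\in\dom(T^*)$ satisfy $\omega(u,w)=0$ for all $w\in\dom(T^*)$. Equip $\mathcal{H}\oplus\mathcal{H}$ with the unitary $J(x,y)\coloneqq(-y,x)$. The standard identity $\mathrm{Graph}(T^*)=\big(J\,\mathrm{Graph}(T)\big)^{\perp}$ holds, and since $T$ is closed, taking orthogonal complements gives
\[
\mathrm{Graph}(T)=\big(J\,\mathrm{Graph}(T^*)\big)^{\perp}.
\]
It therefore suffices to show that $(u,T^*u)\perp J(w,T^*w)=(-T^*w,w)$ for every $w\in\dom(T^*)$. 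Computing,
\[
\ip{(u,T^*u)}{(-T^*w,w)}=-\ip{u}{T^*w}+\ip{T^*u}{w}=-\omega(u,w)=0 .
\]
Hence $(u,T^*u)\in\mathrm{Graph}(T)$, which means $u\in\dom(T)$ (and $Tu=T^*u$).

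The main obstacle is justifying $\mathrm{Graph}(T)=(J\,\mathrm{Graph}(T^*))^\perp$, which is where closedness of $T$ enters. First, $(J\,\mathrm{Graph}(T))^\perp=\mathrm{Graph}(T^*)$ follows directly from the definition of $T^*$, using density of $\dom(T)$ so that $T^*$ is a well-defined operator. Second, because $J$ is unitary with $J^2=-1$, the map $J$ commutes with taking orthogonal complements. Therefore
\[
(J\,\mathrm{Graph}(T^*))^\perp=J\big(\mathrm{Graph}(T^*)^\perp\big)=J\big(J\,\overline{\mathrm{Graph}(T)}\big)=-\overline{\mathrm{Graph}(T)}=\mathrm{Graph}(T),
\]
where the last equality uses that $\mathrm{Graph}(T)$ is a closed subspace.

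An alternative route is to work in the Hilbert space $(\dom(T^*),\ip{\cdot}{\cdot}[T^*])$. There one decomposes $u=u_0+u_1$ with $u_0\in\dom(T)$ and $u_1\perp_{T^*}\dom(T)$, then shows $u_1=0$ by choosing a suitable test vector $w$. This is essentially the same computation as the graph argument, so I would use the graph argument.
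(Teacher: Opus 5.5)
Your proof is correct and is essentially the paper's argument. For ($\Leftarrow$), the paper notes that $\ip{u}{T^*w}=\ip{T^*u}{w}$ for all $w\in\dom(T^*)$ means $u\in\dom(T^{**})=\dom(\overline{T})=\dom(T)$. Your $J$-graph computation is an inline proof of $\overline{\mathrm{Graph}(T)}=(J\,\mathrm{Graph}(T^*))^{\perp}$, i.e.\ of $T^{**}=\overline{T}$, and your ($\Rightarrow$) direction matches the paper's.
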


\begin{proof}
    If $u \in \dom(T)$, then $T^*u = Tu$ because $T \subset T^*$. Therefore, for every $w \in \dom(T^*)$, one has
    \[
    \omega(u, w) = \ip{u}{T^*w} - \ip{T^*u}{w} = \ip{Tu}{w} - \ip{Tu}{w} = 0 \ .
    \]

    If $u \in \dom(T^*)$ is such that $\omega(u, w) = 0$ for all $w \in \dom(T^*)$, then $u \in \dom((T^{*})^{*}) = \dom(\cl{T}) = \dom(T)$ because $T$ is closed. This is a direct computation: for every $w \in \dom(T^*)$,
    \[
    \ip{u}{T^*w} = \omega(u, w) + \ip{T^*u}{w} = \ip{T^*u}{w} \ . \qedhere
    \]
\end{proof}

\begin{Definition} \label{Def:omega-orthogonal-subspace}
    For a given subspace $L \subseteq \dom(T^*)$, we define the $\omega$-orthogonal subspace of $L$ as follows
    \[
    L^\omega \coloneqq \big\{ u \in \dom(T^*) \colon \omega(u, w) = 0 \ \forall \, w \in L \} \ .
    \]
    Thus, we say that a linear subspace $L$ is isotropic if and only if $\dom(T) \subseteq L \subseteq L^{\omega}$ \footnote{Lemma \ref{Lemma:characterization_dom(T)-via_omega} implies that $\dom(T) \subseteq L^\omega$ for any linear subspace $L$.}, and $L$ is Lagrangian if and only if $L = L^\omega$.
\end{Definition}

\begin{Prop} \label{Prop:self-adjoint-characterization}
    Let $T$ be a closed symmetric operator, and let $\omega$ be the skew-Hermitian form \eqref{Eq:def-omega_form} on $\dom(T^*)$. For a subspace $\dom(T) \subseteq L \subseteq \dom(T^*)$, define the operator $T_{L} \coloneqq T^*|_{L}$.
    \begin{enumerate}
        \item $(T_{L})^* = T_{L^{\omega}}$
        \item $T_{L}$ is symmetric if and only if $L$ is isotropic.
        \item $T_{L}$ is self-adjoint if and only if $L$ is Lagrangian.
    \end{enumerate}
\end{Prop}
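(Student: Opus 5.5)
We prove the three items in order; (2) and (3) follow quickly from (1), so almost all the work is in (1).

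\textbf{Item (1).} Fix $L$ with $\dom(T) \subseteq L \subseteq \dom(T^*)$. Since $T \subset T_L$, taking adjoints gives $(T_L)^* \subset T^*$. Hence every $u \in \dom((T_L)^*)$ lies in $\dom(T^*)$, and $(T_L)^* u = T^* u$. It therefore suffices to show that
\[
\dom((T_L)^*) = L^\omega
\]
as subsets of $\dom(T^*)$. Let $u \in \dom(T^*)$. By definition, $u \in \dom((T_L)^*)$ exactly when there is a vector $v$ with $\ip{u}{T^* w} = \ip{v}{w}$ for every $w \in L$.

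Suppose $u \in L^\omega$. Then $\ip{u}{T^*w} = \ip{T^*u}{w}$ for all $w \in L$, so $v = T^*u$ works and $u \in \dom((T_L)^*)$.

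Conversely, suppose $u \in \dom((T_L)^*)$. We already know $v = (T_L)^* u = T^* u$. Then for every $w \in L$,
\[
\omega(u,w) = \ip{u}{T^*w} - \ip{T^*u}{w} = 0,
\]
so $u \in L^\omega$. This proves (1).

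\textbf{Item (2).} By definition, $T_L$ is symmetric if and only if $T_L \subset (T_L)^*$. By (1) this reads $T_L \subset T_{L^\omega}$. Both operators are restrictions of $T^*$, so the inclusion holds exactly when $L \subseteq L^\omega$. Together with the standing assumption $\dom(T) \subseteq L$, this is the definition of $L$ being isotropic.

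\textbf{Item (3).} Likewise, $T_L$ is self-adjoint if and only if $T_L = T_{L^\omega}$. Since both are restrictions of $T^*$, this holds exactly when $L = L^\omega$, i.e.\ when $L$ is Lagrangian.

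The only delicate point is the inclusion $(T_L)^* \subset T^*$ used at the start. It relies on $\dom(T)$ being dense: this makes $T$ densely defined so that $T^*$ exists, and it forces the vector $v$ representing $w \mapsto \ip{u}{T^* w}$ on $L \supseteq \dom(T)$ to be unique and equal to $T^*u$. Closedness of $T$ is not needed for this statement; it only enters through Lemma \ref{Lemma:characterization_dom(T)-via_omega}, which guarantees $\dom(T) \subseteq L^\omega$.
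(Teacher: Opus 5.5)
Your proof is correct and follows the same route as the paper: both reduce (1) to the identity $\dom((T_L)^*) = L^{\omega}$, proved by two inclusions, and then read off (2) and (3) by comparing restrictions of $T^*$. The only difference is in packaging: you obtain $(T_L)^* \subset T^*$ directly from the inclusion-reversal of adjoints applied to $T \subset T_L$. The paper instead shows that the bounded functional $w \mapsto \omega(u,w)$ on $L$ vanishes on the dense subspace $\dom(T)$, and both arguments rest on the same density of $\dom(T)$.
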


\begin{proof}
    Let $L$ be a subspace as described above. To prove item (1), it suffices to show that $\dom( (T_{L})^* ) = L^{\omega}$. If that were the case, for all $u \in L^{\omega}$ and all $w \in L$,
    \[
    \ip{ (T_{L})^* u}{w} = \ip{u}{T_{L} w} = \ip{u}{T^* w} = \omega(u, w) + \ip{T^*u}{w} = \ip{T^*u}{w} \ .
    \]
    
    Assume first that $u \in \dom( (T_{L})^*)$. We are going to show that $\omega(u, w) = 0$ for all $w \in L$. First, by the definition of $\dom( (T_{L})^* )$, there exists $C_{u} > 0$ such that
    \[
    \abs*{ \ip{u}{T^*w} } \leq C_{u} \norm{w} \qquad \forall \, w \in L \ .
    \]
    Since $\dom(T) \subseteq L$, we infer that $u \in \dom(T^*)$. This implies that
    \[
    \omega_{u}(w) \coloneqq \omega(u, w) = \ip{u}{T^*w} - \ip{T^*u}{w} \ , \qquad w \in \dom(T^*) \ ,
    \]
    is a bounded functional over $L$ in the $\mathcal{H}$ norm. Since $\omega_{u}(w) = 0$ for all $w \in \dom(T)$ and $\dom(T)$ is a dense subspace of $\mathcal{H}$, continuity implies that $\omega_{u} \equiv 0$, thus $u \in L^{\omega}$.

    Assume now that $u \in L^{\omega}$. Therefore, for all $w \in L$,
    \[
    \ip{u}{T_{L} w} = \ip{u}{T^* w} = \omega(u, w) + \ip{T^*u}{w} = \ip{T^* u}{w} \ ,
    \]
    which implies that $u \in \dom( (T_{L})^* )$, and $(T_{L})^* u = T^*u$ in addition.

    Items (2) and (3) are direct consequences of item (1).
\end{proof}

In fact, the inner product structure and the skew-Hermitian structure are related to each other through the operator $T^*$.

\begin{Lemma} \label{Lemma:def-J}
    Let $T$ be a closed symmetric operator on $\mathcal{H}$ and let $E$ be the orthogonal complement of $\dom(T)$ inside $\dom(T^*)$. Define the operator $J \coloneqq T^*|_{E}$.
    \begin{enumerate}
        \item For all $u \in E$, $T^*u \in \dom(T^*)$ and $T^*(T^* u) = -u$.
        \item If $u \in E$, then $\omega(u, w) = \ip{T^*u}{w}[T^*]$ for all $w \in \dom(T^*)$.
        \item Assume that $A$ is a self-adjoint extension of $T$ and that $\dim( \dom(A) / \dom(T) ) < \infty$. Then
        \begin{equation} \label{Eq:identity-dimensions-T^*-T}
            \dim( \dom(T^*) / \dom(T) ) = 2 \dim( \dom(A) /\dom(T) ) \ .
        \end{equation}
    \end{enumerate}
\end{Lemma}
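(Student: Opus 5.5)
The plan is to work throughout in the Hilbert space $(\dom(T^*), \ip{\cdot}{\cdot}[T^*])$, where $\dom(T)$ is a closed subspace and $E$ is its graph-orthogonal complement, so that $\dom(T^*) = \dom(T) \oplus E$. All three items should follow from unwinding the graph inner product and the definitions of $T^*$ and $\omega$.

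For item (1), take $u \in E$. Orthogonality to $\dom(T)$ says that for every $v \in \dom(T)$ we have $\ip{u}{v} + \ip{T^*u}{Tv} = 0$, that is, $\ip{T^*u}{Tv} = \ip{-u}{v}$. This is exactly the defining condition for the vector $T^*u$ to lie in $\dom(T^*)$, with $T^*(T^*u) = -u$; boundedness of $v \mapsto \ip{T^*u}{Tv}$ is immediate from this identity. I will also record that $T^*u \in E$: for $v \in \dom(T)$,
\[
\ip{T^*u}{v} + \ip{T^*T^*u}{Tv} = \ip{T^*u}{v} - \ip{u}{Tv} = 0 ,
\]
by the definition of $T^*u$. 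Hence $J$ maps $E$ to $E$ and $J^2 = -\mathrm{Id}_E$, so $J$ is a bijection of $E$.

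For item (2), expand $\ip{T^*u}{w}[T^*] = \ip{T^*u}{w} + \ip{T^*T^*u}{T^*w}$. Then substitute $T^*T^*u = -u$ from (1), and compare the result term by term with the definition \eqref{Eq:def-omega_form} of $\omega(u,w)$. This is a two-line computation; the only care needed is to keep track of the conjugate-linear slot, so that the identity comes out in the form stated in (2).

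For item (3), the key consequence of (2) is that the restriction of $\omega$ to $E$ is nondegenerate. Indeed, if $u \in E$ satisfies $\omega(u,w) = 0$ for all $w \in E$, then $T^*u \in E$ is graph-orthogonal to all of $E$, so $T^*u = 0$ and hence $u = -T^*T^*u = 0$. Together with Lemma \ref{Lemma:characterization_dom(T)-via_omega} ($\dom(T)$ is $\omega$-orthogonal to everything), this reduces everything to $E$.

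Set $L \coloneqq \dom(A) \cap E$, so that $\dom(A) = \dom(T) \oplus L$ and $\dim L = \dim(\dom(A)/\dom(T)) = d < \infty$. By Proposition \ref{Prop:self-adjoint-characterization}, $\dom(A)$ is Lagrangian, which translates into $L = L^{\omega} \cap E$.

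The remaining step is to show that $L^\omega \cap E$ has codimension exactly $d$ in $E$. Pick a basis $e_1,\dots,e_d$ of $L$. Then $L^\omega \cap E$ is the common kernel of the $d$ functionals $w \mapsto \omega(e_j, w)$ on $E$. By (2), these functionals are $w \mapsto \ip{Je_j}{w}[T^*]$. They are linearly independent because $J$ is injective on $E$, so the common kernel is exactly the graph-orthogonal complement of $J(L)$ in $E$, which has codimension $d$.

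Hence $E = L \oplus (\text{a complement of dimension } d)$, giving $\dim E = 2d$. Since $E \cong \dom(T^*)/\dom(T)$, this is \eqref{Eq:identity-dimensions-T^*-T}.

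The main point to watch is this codimension count. It does not require assuming in advance that $E$ is finite dimensional, because only finitely many functionals are involved. The other potential pitfall is the bookkeeping of conjugations in (2).
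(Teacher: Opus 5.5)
The step that fails as you describe it is (2). If you carry out the computation you propose, you get
\[
\ip{T^*u}{w}[T^*] = \ip{T^*u}{w} + \ip{-u}{T^*w} = \ip{T^*u}{w} - \ip{u}{T^*w} = -\omega(u,w) .
\]
Keeping track of the conjugate-linear slot cannot change this, because the only scalar involved is the real number $-1$. So the identity in (2), as stated here and in the paper, is off by a sign. It is genuinely false as soon as $E\neq\{0\}$: for $0\neq u\in E$ and $w=T^*u$ one has $\omega(u,T^*u)=-\norm{T^*u}[T^*]^2<0$, whereas (2) would give a positive value.

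The correct statement is $\omega(u,w)=-\ip{T^*u}{w}[T^*]$ for $u\in E$, or equivalently $\omega(T^*u,w)=\ip{u}{w}[T^*]$. Your claim that the computation "comes out in the form stated" should be replaced by this. The paper only ever uses (2) to conclude that certain quantities vanish, so the sign is harmless there. Likewise, your (3) uses (2) only for vanishing, kernels and linear independence of the functionals $w\mapsto\omega(e_j,w)$, so it goes through unchanged once (2) is corrected.

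Apart from this, your (1) and (3) follow the paper's argument. The paper writes $\dom(A)=\dom(T)+S$ with $S\subseteq E$ (your $L$ is its $S$). It shows by double inclusion that the graph-orthogonal complement of $\dom(A)$ is $J[S]$, so $\dom(T^*)=\dom(T)\oplus S\oplus J[S]$. Your identity $L=L^{\omega}\cap E=J(L)^{\perp}\cap E$ is the same splitting $E=L\oplus J(L)$, built from the same ingredients: the Lagrangian property and (2).

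You also improve on the paper in two small ways. You explicitly check $J(E)\subseteq E$, which the paper uses in (3) but does not verify in (1). And you correctly note that finiteness of $\dim E$ need not be assumed in advance.
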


\begin{proof}
    Let $u \in E$, that is, such that $\ip{u}{w}[T^*] = 0$ for all $w \in \dom(T)$. To show that $Ju = T^* u \in E$ we study $\ip{T^*u}{Tw}$ for $w \in \dom(T)$.
    \[
    \ip{T^*u}{Tw} = \ip{T^*u}{T^*w} = \ip{u}{w}[T^*] - \ip{u}{w} = \ip{-u}{w} \ .
    \]
    From this identity, we read $T^*u \in \dom(T^*)$ and $T^*(T^*u) = -u$. Item (2) is a direct computation from item (1).

    Assume that $A$ is a self-adjoint extension of $T$. Since $T$ is symmetric, then $A = A^* \subset T^*$, thus $A = T^*|_{L}$ for some Lagrangian subspace $L$ of $\dom(T^*)$ (Proposition \ref{Prop:self-adjoint-characterization}). Let us write $L = \dom(T) + S$, where $S \subseteq E$. We will prove that
    \begin{equation} \label{Eq:dom(T*)-description}
        \dom(T^*) = \dom(T) + S + T^*[S] \ ,
    \end{equation}
    and that $T^*[S]$ and $S$ are $T^*$-orthogonal; if $\dim( \dom(T^*) / \dom(T) ) < \infty$, using that $T^*|_{E}$ is an isomorphism of $E$, one obtains \eqref{Eq:identity-dimensions-T^*-T}.

    To prove \eqref{Eq:dom(T*)-description} and that $T^*[S]$ and $S$ are $T^*$-orthogonal, it suffices to show
    \[
    L^{\perp} \coloneqq \big\{ u \in \dom(T^*) \colon \ip{u}{w}[T^*] = 0 \ \forall \, w \in L \big\} = J[S] \ .
    \]
    If $u \in L^{\perp}$, then $u \in E$ because $\ip{u}{w}[T^*] = 0$ for all $w \in \dom(T)$. Due to item (1), $T^*u \in E$ too. In addition, for every $w \in L$, due to item (2),
    \[
    0 = \ip{u}{w}[T^*] = \omega(T^*u, w) \ ,
    \]
    thus $T^*u \in L^{\omega} = L$. Since $T^*u \in E \cap L$, we have $T^* u \in S$, therefore $u = T^*[-T^* u] \in J[S]$.

    Now, let $v \in S$ and take $u = T^*v \in E$. We have for every $w \in L$,
    \[
    \ip{u}{w}[T^*] = \ip{T^*v}{w}[T^*] = \omega(v, w) = 0 \ ,
    \]
    because $v \in S \subseteq L = L^{\omega}$, hence $u \in L^{\perp}$.
\end{proof}

\printbibliography

\end{document}

%% file: Repository/packages.tex
\usepackage[utf8]{inputenc}
\usepackage[T1]{fontenc}
\usepackage{geometry}
\usepackage[sorting = nyt, sortcites = true, giveninits = true, isbn = false]{biblatex}                          
\AtEveryBibitem{\clearfield{number}\clearfield{pages}}
\renewbibmacro{in:}{}

\usepackage{amssymb}
\usepackage{mathrsfs} 
\usepackage{mathtools} 
\usepackage{amsthm} 
\usepackage{thmtools} 
\usepackage{dsfont}

\usepackage[dvipsnames]{xcolor}
\usepackage[shortlabels]{enumitem} 
\usepackage[colorlinks = true]{hyperref}

\theoremstyle{plain}
\declaretheorem[parent = section]{Theorem}
\declaretheorem[sibling = Theorem]{Lemma}

\declaretheorem[name = Proposition, sibling = Theorem]{Prop}

\theoremstyle{definition}
\declaretheorem[sibling = Theorem]{Definition}

\declaretheorem[sibling = Theorem]{Remark}

\numberwithin{equation}{section} 



%% file: Repository/commands.tex
\DeclareDocumentCommand{\R}{O{} O{}}{ \mathbb{R}_{#2}^{#1} }
\DeclareDocumentCommand{\C}{O{} O{}}{ \mathbb{C}_{#2}^{#1} }
\DeclareDocumentCommand{\S}{O{} O{}}{ \mathbb{S}_{#2}^{#1} }
\DeclareDocumentCommand{\T}{O{} O{}}{ \mathbb{T}_{#2}^{#1} }
\DeclareDocumentCommand{\Z}{O{} O{}}{ \mathbb{Z}_{#2}^{#1} }
\def \N{\mathbb{N}}

\def \Cont{\mathcal{C}}

\def \Cinf{\mathcal{C}^{\infty}}
\def \CinfK{\mathcal{C}_{c}^{\infty}}
\def \Dist{\mathcal{D}'}

\def \charf{\mathds{1}}

\newcommand \littleo{
  \mathchoice
    {\scalebox{0.9}{$\scriptstyle\mathcal{O}$}}
    {\scalebox{0.9}{$\scriptstyle\mathcal{O}$}}
    {\scalebox{0.9}{$\scriptscriptstyle\mathcal{O}$}}
    {\scalebox{0.7}{$\scriptscriptstyle\mathcal{O}$}}
  }
\def \BigO{\mathcal{O}}

\newcommand \Liouv{
\mathchoice
    {{\scriptstyle\mathcal{L}}}
    {{\scriptstyle\mathcal{L}}}
    {{\scriptscriptstyle\mathcal{L}}}
    {\scalebox{.7}{$\scriptscriptstyle\mathcal{L}$}}
  }

\DeclarePairedDelimiter{\abs}{\lvert}{\rvert}

\NewDocumentCommand{\norm}{s O{} m O{}}
    {
        \IfBooleanTF{#1}
            {\ensuremath{{\left\lVert #3 \right\rVert}_{#4}}}
            {{#2\lVert {#3} #2\rVert}_{#4}}
    } 

\NewDocumentCommand{\ip}{s O{} m m O{}}
    {
        \IfBooleanTF{#1}
            {\ensuremath{{\left\langle #3 \, \middle| \, #4 \right\rangle}_{#5}}}
            {{#2\langle #3 \, #2| \, #4 #2\rangle}_{#5}}
    }

\DeclarePairedDelimiter{\bra}{\langle}{|}
\DeclarePairedDelimiter{\ket}{|}{\rangle}

\newcommand*{\cl}[1]{\mkern 2mu\overline{\mkern-1.5mu#1\mkern-1.5mu}\mkern 2mu} 
\newcommand*{\conj}[1]{ \mkern 2mu \overline{\mkern-1mu #1 \mkern-1mu} \mkern 2mu}
\newcommand*{\D}[1]{\mathop{\mathrm{d}#1}}

\DeclareMathOperator{\spn}{Span}

\NewDocumentCommand{\Span}{O{} O{} m}{\spn_{#1} #2\{ #3 #2\}}

\let\Im\relax

\DeclareMathOperator{\Im}{\mathfrak{Im}}
\DeclareMathOperator{\dom}{dom}

\DeclareMathOperator{\Lap}{\Delta}
\def \sqLap{\sqrt{\Lap}}
\DeclareMathOperator{\supp}{supp} 
\DeclareMathOperator{\Opp}{Op}
\DeclareMathOperator{\vol}{vol}
\DeclareMathOperator{\Spec}{Spec}

\DeclareMathOperator{\cotan}{cotan}

\NewDocumentCommand{\Op}{O{} O{} m}{\ensuremath{\Opp_{#1}^{#2} \left( #3 \right)}}